\documentclass[letter,11pt]{article}

\usepackage{setspace}
\usepackage{subcaption}
\usepackage{enumerate}
\usepackage[T1]{fontenc}
\usepackage{amsmath}
\usepackage{amsfonts}
\usepackage{graphicx}
\usepackage{tensor}
\usepackage{tcolorbox}
\usepackage{amssymb}
\usepackage{xspace}
\usepackage{multirow}
\usepackage{geometry}
\usepackage{soul}
\usepackage{colortbl}
\usepackage{wrapfig}
\usepackage{algorithm}
\usepackage{algorithmicx}
\usepackage{algpseudocode}
\usepackage{amsthm}
\usepackage{enumitem}
\usepackage{todonotes}
\usepackage{todonotes}
\usepackage{mathtools}
\usepackage{mathrsfs}
\usepackage{cite}
\usepackage{tikz}
\usepackage{physics}
\usepackage{complexity}
\usepackage[pdftex, plainpages = false, pdfpagelabels, 
                 bookmarks=false,
                 bookmarksopen = true,
                 bookmarksnumbered = true,
                 breaklinks = true,
                 linktocpage,
                 pagebackref,
                 colorlinks = true,  
                 linkcolor = blue,
                 urlcolor  = blue,
                 citecolor = red,
                 anchorcolor = green,
                 hyperindex = true,
                 hyperfigures
                 ]{hyperref}
\usepackage[capitalise]{cleveref}
\usepackage{thmtools} 
\usepackage{thm-restate}
\usepackage{wrapfig}
\usepackage{bm}
\usepackage{tikz}
\usepackage{svg} 
\usetikzlibrary{decorations.pathreplacing,positioning,shapes.geometric}

\usepackage{bm}
\usepackage{bbm}
\usepackage{thmtools}

\newcommand{\ceil}[1]{\left\lceil #1 \right\rceil}
\newcommand{\floor}[1]{\left\lfloor #1 \right\rfloor}

\newcommand{\set}[1]{\left\{#1\right\}}
\newcommand{\sm}{\setminus}
\newcommand{\mt}{\emptyset}

\newcommand{\nat}{{\mathbb N}}

\usepackage{tikzit}

\tikzstyle{box}=[shape=rectangle, text height=1.5ex, text depth=0.25ex, yshift=0.5mm, fill=white, draw=black, minimum height=5mm, yshift=-0.5mm, minimum width=5mm, font={\small}]
\tikzstyle{gate}=[shape=rectangle, text height=1.5ex, text depth=0.25ex, yshift=0.5mm, fill=white, draw=black, minimum height=5mm, yshift=-0.5mm, minimum width=5mm, font={\small}, tikzit category=circuit]
\tikzstyle{big gate}=[shape=rectangle, text height=1.5ex, text depth=0.25ex, yshift=0.5mm, fill=white, draw=black, minimum height=10mm, yshift=-0.5mm, minimum width=5mm, font={\small}, tikzit category=circuit]
\tikzstyle{Z dot}=[inner sep=0mm, minimum size=2mm, shape=circle, draw=black, fill=white, tikzit category=zx]
\tikzstyle{Z phase dot}=[minimum size=5mm, font={\footnotesize\boldmath}, shape=rectangle, rounded corners=2mm, inner sep=0.2mm, outer sep=-2mm, scale=0.8, tikzit shape=circle, draw=black, fill=white, tikzit draw=blue, tikzit category=zx]
\tikzstyle{X dot}=[Z dot, shape=circle, draw=black, fill={rgb,255: red,255; green,136; blue,136}, tikzit category=zx]
\tikzstyle{X phase dot}=[Z phase dot, tikzit shape=circle, tikzit draw=blue, fill={rgb,255: red,255; green,136; blue,136}, font={\footnotesize\boldmath}, tikzit category=zx]
\tikzstyle{hadamard}=[fill=yellow, draw=black, shape=rectangle, inner sep=0.6mm, minimum height=1.5mm, minimum width=1.5mm, tikzit category=zx]
\tikzstyle{paulibox}=[fill={rgb,255: red,221; green,221; blue,255}, draw=black, shape=rectangle, inner sep=0.6mm, minimum height=5mm, minimum width=5mm, font={\footnotesize}, text height=1.5ex, text depth=0.25ex, tikzit category=zx]
\tikzstyle{vertex}=[inner sep=0mm, minimum size=1mm, shape=circle, draw=black, fill=black, tikzit category=misc]
\tikzstyle{vertex set}=[inner sep=0mm, minimum size=2mm, shape=circle, draw=black, fill=white, font={\footnotesize\boldmath}, tikzit category=misc]
\tikzstyle{small black dot}=[fill=black, draw=black, shape=circle, inner sep=0pt, minimum width=1.2mm, tikzit category=circuit]
\tikzstyle{cnot ctrl}=[fill=black, draw=black, shape=circle, inner sep=0pt, minimum width=1.2mm, tikzit category=circuit]
\tikzstyle{cnot targ}=[fill=white, draw=white, shape=circle, tikzit category=circuit, label={center:$\oplus$}, inner sep=0pt, minimum width=2.1mm, tikzit fill={rgb,255: red,102; green,204; blue,255}, tikzit draw=black]
\tikzstyle{ket}=[fill=white, draw=black, shape=regular polygon, regular polygon sides=3, regular polygon rotate=-30, scale=0.7, inner sep=1pt, tikzit category=circuit, tikzit shape=rectangle, tikzit fill=green]
\tikzstyle{bra}=[fill=white, draw=black, shape=regular polygon, regular polygon sides=3, regular polygon rotate=30, scale=0.7, inner sep=1pt, tikzit category=circuit, tikzit shape=rectangle, tikzit fill=red]
\tikzstyle{scalar}=[shape=rectangle, text height=1.5ex, text depth=0.25ex, yshift=0.5mm, fill=white, draw=black, minimum height=5mm, yshift=-0.5mm, minimum width=5mm, font={\small}]
\tikzstyle{clabel}=[fill=white, draw=none, shape=rectangle, tikzit fill={rgb,255: red,56; green,255; blue,242}, font={\footnotesize}, inner sep=1pt, tikzit category=labels]
\tikzstyle{empty diagram}=[draw={gray!40!white}, dashed, shape=rectangle, minimum width=1cm, minimum height=1cm, tikzit category=misc]
\tikzstyle{white dot}=[Z dot]
\tikzstyle{gray dot}=[X dot]
\tikzstyle{white phase dot}=[Z phase dot]
\tikzstyle{gray phase dot}=[X phase dot]
\tikzstyle{small hadamard}=[hadamard]
\tikzstyle{implies}=[-implies, double, double distance=2pt]
\tikzstyle{blue vertex}=[inner sep=0mm, minimum size=1mm, shape=circle, draw=black, fill={rgb,255: red,24; green,164; blue,239}, tikzit category=misc]
\tikzstyle{green vertex}=[inner sep=0mm, minimum size=1mm, shape=circle, draw=black, fill={rgb,255: red,221; green,255; blue,221}, tikzit category=misc]
\tikzstyle{red vertex}=[inner sep=0mm, minimum size=1mm, shape=circle, draw=black, fill={rgb,255: red,255; green,136; blue,136}, tikzit category=misc]
\tikzstyle{gray vertex}=[inner sep=0mm, minimum size=1mm, shape=circle, draw=black, fill={rgb,255: red,191; green,191; blue,191}, tikzit category=misc]
\tikzstyle{purple vertex}=[inner sep=0mm, minimum size=1mm, shape=circle, draw=black, fill={rgb,255: red,224; green,64; blue,255}, tikzit category=misc]
\tikzstyle{yellow vertex}=[inner sep=0mm, minimum size=1mm, shape=circle, draw=black, fill={rgb,255: red,255; green,241; blue,38}, tikzit category=misc]

\tikzstyle{simple}=[-]
\tikzstyle{hadamard edge}=[-, dashed, dash pattern=on 2pt off 0.5pt, thick, draw={rgb,255: red,68; green,136; blue,255}]
\tikzstyle{box edge}=[-, dashed, dash pattern=on 2pt off 0.75pt, thick, draw={rgb,255: red,128; green,128; blue,128}, fill=none, fill opacity=0.3]
\tikzstyle{brace edge}=[-, tikzit draw=blue, decorate, decoration={brace,amplitude=1mm,raise=-1mm}]
\tikzstyle{diredge}=[->]
\tikzstyle{double edge}=[-, double, shorten <=-1mm, shorten >=-1mm, double distance=2pt]
\tikzstyle{gray edge}=[-, {gray!60!white}]
\tikzstyle{pointer edge}=[->, very thick, gray]
\tikzstyle{boldedge}=[-, line width=2pt, shorten <=-0.17mm, shorten >=-0.17mm]
\tikzstyle{gray fill}=[-, dashed, dash pattern=on 2pt off 0.5pt, thick, draw=none, fill={rgb,255: red,191; green,191; blue,191}, fill opacity=0.3, tikzit draw=black]
\tikzstyle{blue fill}=[-, dashed, dash pattern=on 2pt off 0.5pt, thick, draw=none, fill={rgb,255: red,24; green,164; blue,239}, fill opacity=0.3, tikzit draw=black]
\tikzstyle{green fill}=[-, dashed, dash pattern=on 2pt off 0.5pt, thick, draw=none, fill={rgb,255: red,221; green,255; blue,221}, fill opacity=0.5, tikzit draw=black]
\tikzstyle{purple fill}=[-, dashed, dash pattern=on 2pt off 0.5pt, thick, draw=none, fill={rgb,255: red,224; green,64; blue,255}, fill opacity=0.5, tikzit draw=black]
\tikzstyle{yellow fill}=[-, dashed, dash pattern=on 2pt off 0.5pt, thick, draw=none, fill={rgb,255: red,255; green,241; blue,38}, fill opacity=0.5, tikzit draw=black]
\tikzstyle{red fill}=[-, draw=none, fill={rgb,255: red,255; green,136; blue,136}, fill opacity=0.5, tikzit draw={rgb,255: red,255; green,136; blue,136}]
\tikzstyle{red edge}=[-, draw={rgb,255: red,212; green,0; blue,14}]

\definecolor{zxredfg}{RGB}{100,0,0}
\definecolor{zxgreenfg}{RGB}{0,50,0}

\algtext*{EndWhile}
\algtext*{EndIf}
\algtext*{EndFor}
\algtext*{EndProcedure}
\algtext*{EndFunction}

\newtheorem{lemma}{Lemma}
\newtheorem*{remark}{Remark}
\numberwithin{lemma}{section}
\newtheorem{theorem}[lemma]{Theorem}
\newtheorem{theorem*}[lemma]{Theorem*}
\newtheorem{corollary}[lemma]{Corollary}

\newtheorem{claim}{Claim}[lemma]
\newtheorem{proposition}[lemma]{Proposition}

\newcommand*{\claimproofs}{Proof of the Claim.}
\newenvironment{claimproof}[1][\claimproofs]{\begin{proof}[#1]}{\end{proof}}

\newcommand{\skt}[1]{Squiggly $K_{#1,#1}$\xspace}
\newcommand{\cals}{\mathcal{S}\xspace}
\newcommand{\calc}{\mathcal{C}\xspace}
\newcommand{\calb}{\mathcal{B}\xspace}
\newcommand{\bcalb}{\Bar{\mathcal{B}}\xspace}
\newcommand{\call}{\mathcal{L}\xspace}

\newcommand{\cala}{\mathcal{A}\xspace}
\newcommand{\calf}{\mathcal{F}\xspace}
\newcommand{\abpack}{$(A,B)$-packing\xspace}
\newcommand{\balpack}{$(Y,\frac{1}{2})$-packing\xspace}
\newcommand{\atw}[1]{tw_\alpha(#1)}

\newcommand{\defn}[1]{\textcolor[RGB]{128,0,0}{\emph{#1}}}
\newcommand{\prop}{\mathcal{P}}
\DeclareMathOperator{\dist}{dist}

\newcommand{\tsquig}{\tilde{t}^{\text{\scalebox{0.7}{\tiny\ref*{lem:squigglyktt}}}}}
\newcommand{\pslim}{p^{\text{\scalebox{0.7}{\tiny\ref*{lem:pqslim}}}}}
\newcommand{\qslim}{q^{\text{\scalebox{0.7}{\tiny\ref*{lem:pqslim}}}}}
\newcommand{\dball}{d^{\text{\scalebox{0.7}{\tiny\ref*{prop:coarse_treewidth_our_class}}}}}
\newcommand{\eball}{\epsilon^{\text{\scalebox{0.7}{\tiny\ref*{prop:coarse_treewidth_our_class}}}}}
\newcommand{\kball}{k^{\text{\scalebox{0.7}{\tiny\ref*{prop:coarse_treewidth_our_class}}}}}
\newcommand{\lamone}{\lambda_1^{\text{\scalebox{0.7}{\tiny\ref*{thm:strong_barrier}}}}}
\newcommand{\lamtwo}{\lambda_2^{\text{\scalebox{0.7}{\tiny\ref*{thm:strong_barrier}}}}}
\newcommand{\gcost}{g^{\text{\scalebox{0.7}{\tiny\ref*{prop:bipartite_lemma}}}}}
\newcommand{\ccost}{c^{\text{\scalebox{0.7}{\tiny\ref*{lem:AB_packing_new}}}}}
\newcommand{\gthick}{\tau^{\text{\scalebox{0.7}{\tiny\ref*{lem:AB_packing_new}}}}}
\newcommand{\mucostab}{\mu^{\text{\scalebox{0.7}{\tiny\ref*{lem:AB_packing_new}}}}}
\newcommand{\kcost}{\kappa^{\text{\scalebox{0.7}{\tiny\ref*{lem:Y_packing_new}}}}}
\newcommand{\mucostbal}{\mu^{\text{\scalebox{0.7}{\tiny\ref*{lem:Y_packing_new}}}}}

\title{Induced-Minor-Closed Classes have Linear, Square-Root, or Sub-Polynomial Tree-Independence}
\author{Maria Chudnovsky\thanks{Princeton University, Princeton, NJ, USA. Supported by NSF Grants DMS-2348219 and CCF-2505100,  AFOSR grant FA9550-25-1-0275, and a Guggenheim Fellowship.} \and
Julien Codsi\thanks{Princeton University, Princeton, NJ, USA. Supported by NSF Grant DMS-2348219 and by the Fonds de recherche du Québec via the doctoral research scholarship 321124.} \and
Ajaykrishnan E~S\thanks{Department of Computer Science, University of California Santa Barbara, Santa Barbara, CA, USA. Supported by NSF Grant CCF-2505099.} \and
Daniel Lokshtanov$^{\ddagger}$
}

\date{}

\begin{document}

\pagenumbering{gobble}

\maketitle

\begin{abstract}

An \defn{independent set} in a graph $G$ is a set of pairwise non-adjacent vertices.
A \defn{tree decomposition} of  $G$ is a pair $(T, \chi)$ where $T$ is a tree and $\chi : V(T) \rightarrow 2^{V(G)}$ is a function satisfying the following two axioms: for every edge $uv \in E(G)$ there is an $x \in V(T)$ such that $\{u,v\} \subseteq \chi(x)$, and for every vertex $u \in V(G)$ the set $\{x \in V(T) ~|~ u \in \chi(x)\}$ induces a non-empty and connected subtree of $T$.
The sets $\chi(x)$ for $x \in V(T)$ are called the \defn{ bags} of the tree decomposition.  The \defn{tree-independence} number of $G$ is the minimum taken over all tree decompositions of $G$ of the maximum size of an independent set of the graph induced by a bag of the tree decomposition. A graph $H$ is an \defn{ induced minor} of a graph $G$ if a graph isomorphic to $H$ can be obtained from $G$ by a sequence of vertex deletions and edge contractions.



We prove that for every positive integer $t$ there exists an $\epsilon > 0$ such that every graph $G$ either contains the complete bipartite graph $K_{t,t}$ or the wall $W_{t\times t}$ as an induced minor, or $G$ has tree-independence at most $O(2^{O((\log n)^{1-\epsilon})})$.
This leads to algorithms with running time $2^{2^{O((\log n)^{1-\delta})}} = 2^{n^{o(1)}}$, for $\delta > 0$, for a wide range of problems, including {\sc Independent Set}, {\sc Feedback Vertex Set} and $k$-{\sc Coloring}, on $\{K_{t,t}, W_{t\times t}\}$-induced minor free graphs.
Our result is a substantial generalization of existing bounds for the tree-independence and tree-width on various graph classes, and a partial resolution of the conjecture of Chudnovsky, E S, and Lokshtanov [Arxiv, 2025] that $\{K_{t,t}, W_{t\times t}\}$-induced minor free graphs have poly-logarithmic tree independence number. The generality comes at the cost of a {\em sub-polynomial}, rather than {\em poly-logarithmic} upper bound. 

Our result leads to a complete classification of induced-minor closed classes into ones that have sub-polynomial tree-independence, tree-independence equal to $\sqrt{n}$ up to poly-logarithmic factors, and linear tree-independence. 
\end{abstract}

\newpage
\pagenumbering{arabic}

\section{Introduction}

A \defn{tree decomposition} of a graph $G$ is a pair $(T,\chi)$ in which $T$ is a
tree and $\chi\colon V(T)\to 2^{V(G)}$ is a function subject to two constraints:
every edge of $G$ is contained in $\chi(x)$ for some $x\in V(T)$, and for every
vertex $v\in V(G)$ the set of nodes $x$ with $v\in\chi(x)$ induces a non-empty
connected subtree of $T$.
The sets $\chi(x)$ for $x\in V(T)$ are the \defn{bags} of the decomposition.
Tree decompositions are among the most powerful tools in modern graph theory.
On the structural side they are the backbone of the Graph Minors project of
Robertson and Seymour~\cite{RobertsonS04}, and on the algorithmic side they underlie a vast
body of efficient algorithms, from Courcelle's theorem~\cite{ArnborgLS91, Courcelle90, BoriePT92}, the
isomorphism algorithm of Grohe and Marx~\cite{GroheM15} (see also~\cite{Neuen24}), and the bidimensionality theory
of Demaine et al.~\cite{DemaineFHT05} to the almost-linear time algorithm of Korhonen, Pilipczuk, and Stamoulis  
for minor testing~\cite{KorhonenPS24} (see also~\cite{Korhonen25}) and the linear time algorithm of Korhonen for finding the edge-$k$-connected components~\cite{Korhonen25b}, to name just a few.

The standard way to measure the quality of a tree decomposition $(T,\chi)$ is by
its \defn{width}, which is defined as $\max_{x\in V(T)}|\chi(x)|-1$. The \defn{treewidth}
$\mathrm{tw}(G)$ of $G$ is the minimum width over all its tree decompositions.
For many purposes, however, this measure is overly pessimistic: a tree
decomposition whose bags are large but structurally simple can still be very
useful algorithmically. 
This has prompted the introduction of several alternative width measures, among
them \emph{minor-matching hypertree width}~\cite{yolov2018minor},
the number of bounded-radius balls, stars, or other prescribed objects needed to cover each bag~\cite{chudnovsky2026coarse,DraganA14,GottlobLS02,nguyen2025asymptotic},
and the \emph{tree-independence number}~\cite{dallard2024treewidth, yolov2018minor}.
In this paper we focus on the tree-independence number.
The \defn{independence number} $\alpha(H)$ of a graph $H$ is the
maximum size of a set of pairwise non-adjacent vertices of $H$.
The \defn{tree-independence number} $\mathrm{tw}_\alpha(G)$ of $G$ is the
minimum, over all tree decompositions $(T,\chi)$ of $G$, of
$\max_{x\in V(T)}\alpha\bigl(G[\chi(x)]\bigr)$. That is, rather than charging a
bag its number of vertices, we charge it only the size of the largest
independent set in the bag.

To streamline the discussion, it is convenient to speak of the treewidth and the
tree-independence of an entire \emph{class} of graphs.
For a class $\mathcal H$ we define $\mathbf{tw}^{\mathcal H}\colon\mathbb N\to
\mathbb N$ by letting $\mathbf{tw}^{\mathcal H}(n)$ be the maximum treewidth of a
graph in $\mathcal H$ on at most $n$ vertices, and we define the
\emph{tree-independence} $\mathbf{tw}^{\mathcal H}_\alpha(n)$ of $\mathcal H$
analogously, with $\mathrm{tw}_\alpha$ in place of $\mathrm{tw}$.

The appeal of the tree-independence number stems from two facts.
First, since $\alpha(G[\chi(x)])\le|\chi(x)|$ for every bag $\chi(x)$, it never
exceeds the treewidth by more than one,
yet for many classes it is dramatically smaller: chordal graphs have treewidth
$n-1$ but tree-independence number $1$ \cite{gavril1974intersection}; even-hole-free graphs and
$3$-path-configuration-free graphs have treewidth $n-1$ but tree-independence
number $\log^{O(1)}n$~\cite{chudnovsky2025tree, ChudnovskyHLS26}; and unit disk graphs have treewidth $n-1$ but
tree-independence number $O(\sqrt n)$~\cite{de2020framework, treeIndCMSOAlgFast}.
Second, while a small tree-independence number does not confer the full
algorithmic power of a small treewidth, it still suffices to solve a rich family
of problems efficiently.
Concretely, for every fixed integer $q$ and every fixed $\mathrm{CMSO}$ (counting
monadic second-order) formula $\varphi$, a maximum-weight induced subgraph of
treewidth at most $q$ satisfying $\varphi$ can be found efficiently on graphs of
small tree-independence number~\cite{treeIndCMSOAlgFast} (see also \cite{LimaMMORS24}).
This already captures many classical problems, among them \textsc{Independent
Set} and \textsc{Feedback Vertex Set}, even though it falls short of the full
generality of Courcelle's theorem on graphs of bounded treewidth.
Still other well-studied problems that lie outside this framework are
nonetheless known to be tractable on graphs of small tree-independence number;
for example, $k$-\textsc{Coloring} for every fixed $k$~\cite{yolov2018minor, dallard2024treewidth}.


Treewidth is closed under taking minors: deleting a vertex, deleting an edge, or
contracting an edge never increases it.
This monotonicity motivates a systematic study of how excluding one or more
graphs as a minor constrains the treewidth of a class, and here the picture is
completely understood.
A minor-closed class $\mathcal H$ falls into exactly one of three cases: 
{\em (i)} $\mathcal H$ contains all graphs, in which case $\mathbf{tw}^{\mathcal H}(n)=
\Theta(n)$,
or {\em (ii)} $\mathcal H$ contains the $t$ by $t$ wall for all $t$, but not all graphs, in
which case $\mathbf{tw}^{\mathcal H}(n)=\Theta(\sqrt n)$,
or {\em (iii)} $\mathcal H$ excludes the $t$ by $t$ wall for some $t$, in which case $\mathbf{tw}^{\mathcal H}(n)=
\Theta(1)$. 
Here the \defn{$t$ by $t$ wall}, denoted by $W_{t\times t}$, is defined as the graph with vertex set 
\[
\left\{(x,y) ~:~ 0 \leq x < 2t \mbox{ and } 0 \leq y < t\right\} \setminus \left\{(0,0), (x_t, t-1)\right\}
\]
where $x_t = 0$ if $t$ is even and $2t-1$ if $t$ is odd, and every pair $(x,y)$ and $(x',y')$ of vertices are adjacent if 
$y=y' \mbox{ and } |x-x'|=1$ or if 
$|y-y'|=1 \mbox{, } x=x' \mbox{ and } \max(y,y') \equiv x \mod 2$. (see Figure~\ref{fig:grid})

\begin{figure}[htbp]
    \centering
    \includegraphics[width=0.9\linewidth]{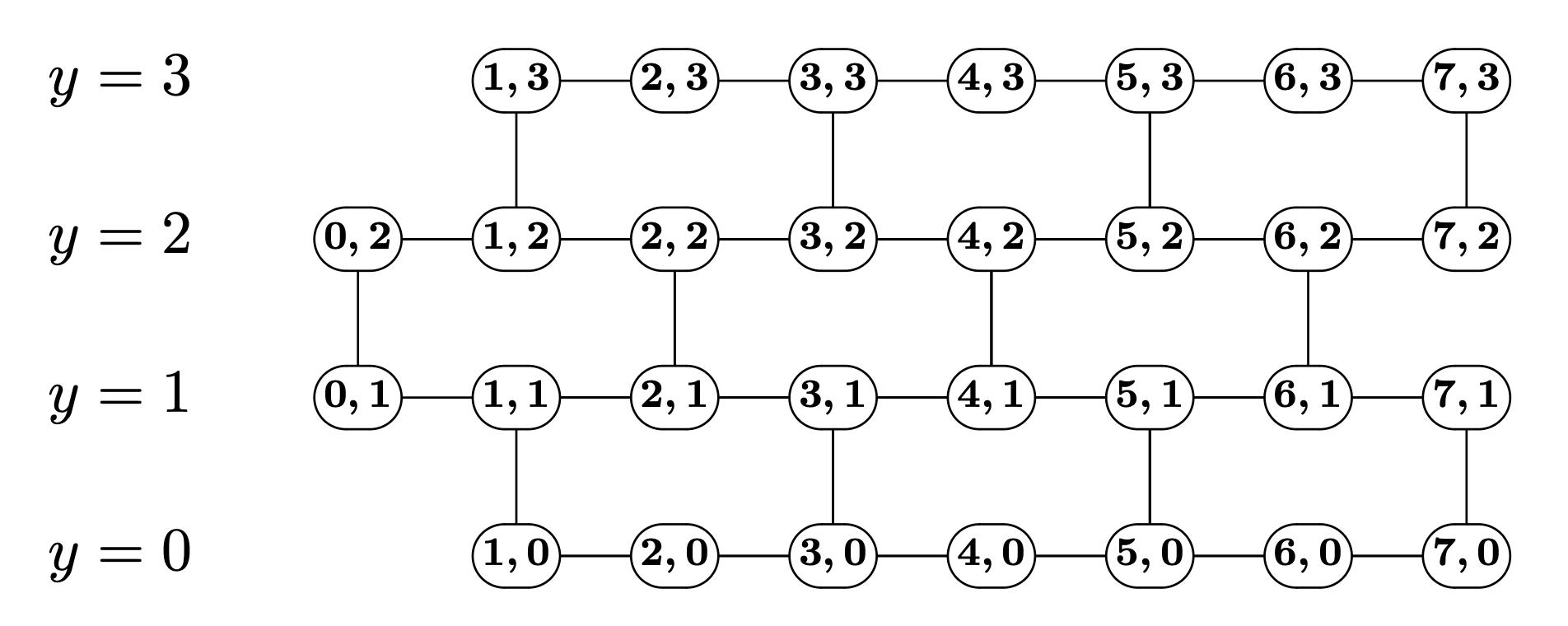}
    \caption{\em The wall \(W_{4\times 4}\) } 
    \label{fig:grid}
\end{figure}

The $O(\sqrt n)$ upper bound in case {\em (ii)} goes back to a classical theorem of Alon, Seymour, and Thomas~\cite{alon1990separator} (see also~\cite{BonnetKLLM26}), who proved that every $n$-vertex graph excluding $K_h$ as a minor has a balanced separator of size $O(h^{3/2}\sqrt n)$, and hence treewidth $O_h(\sqrt n)$. The $t$ by $t$ grids have treewidth $t$~\cite{diestel2025graph}, and therefore show that planar, and therefore $K_5$-minor free graphs, have treewidth $\Omega(\sqrt{n})$. 
The dividing line between case {\em (ii)} and {\em (iii)} follows from the 
Excluded Grid Theorem~\cite{chekuri2016polynomial, chuzhoy2021towards, robertson1986graph}, which states that every graph that excludes $W_{t\times t}$ as a minor has treewidth at most $\tilde{O}(t^9)$.
Here $\tilde{O}$ notation suppresses polylogarithmic factors.

The tree-independence number is closed under \defn{induced minors} --- deleting a
vertex or contracting an edge never increases it --- but, unlike treewidth, it
is \emph{not} closed under taking minors, since deleting an edge \emph{can}
increase it.
This motivates studying how excluding one or more graphs as an induced minor
affects the tree-independence number.
Here, in stark contrast to the clean minor-closed trichotomy for treewidth, the
picture is far from understood.
We summarize the known results most relevant to our work.
\begin{itemize}
    \item If $\mathcal H$ contains the complete bipartite graph $K_{t,t}$ for
    every $t$, then $\mathbf{tw}^{\mathcal H}_\alpha(n)=\Theta(n)$, since the
    tree-independence number of $K_{t,t}$ is $t$~\cite{dallard2024treewidth}.
    \item If $\mathcal H$ contains the $t\times t$ wall $W_{t\times t}$ for every
    $t$, then $\mathbf{tw}^{\mathcal H}_\alpha(n)=\Omega(\sqrt n)$, since the
    tree-independence number of $W_{t\times t}$ is at least $t/2 = \Theta(\sqrt{|V|})$.
    Here the $t/2$ lower bound for the tree-independence of walls follows from the fact that the treewidth of $W_{t\times t}$ is equal to $t$~\cite{diestel2025graph} and that $W_{t\times t}$ is bipartite. 
    
    \item The class of $\{K_{5,5},W_5\}$-induced-minor-free graphs has
    tree-independence number $\Omega(\log n)$~\cite{SintiariT21, chudnovsky2025unavoidable}, so excluding even a small
    complete bipartite graph and a small grid does not force a bounded
    tree-independence number.
    
    \item Despite considerable effort~\cite{chudnovsky2024treeIII,dallard2024treewidthIII,dallard2024treewidth4,hilaire2026treewidthV,hajebi2026tree}, no clean conjecture is known for
    precisely which induced-minor-closed classes have tree-independence number
    $O(1)$; the analogous question of which induced-subgraph-closed classes have
    bounded \emph{treewidth} is known to lack a nice answer~\cite{everyGraphEssential}.
\end{itemize}


Our main theorem shows that, despite this apparent complexity, a surprisingly
clean picture emerges once one is willing to tolerate sub-polynomial slack
between the upper and lower bounds: up to this slack, the tree-independence
number of every induced-minor-closed class is governed by just two obstructions
--- the complete
bipartite graph $K_{t,t}$ and the wall $W_{t\times t}$ --- which place the class
into one of three regimes.

\begin{theorem}\label{thm:mainClassification}
    Let $\mathcal H$ be an induced-minor-closed class of graphs.
    \begin{enumerate}\setlength\itemsep{-.7pt}
        \item\label{itm:subpoly} If there is an integer $t$ with
        $K_{t,t}\notin\mathcal H$ and $W_{t\times t}\notin\mathcal H$, then there is
        an $\epsilon>0$ such that
        $\mathbf{tw}^{\mathcal H}_\alpha(n)\le 2^{O((\log n)^{1-\epsilon})}$.
        \item\label{itm:sqrt} If there is an integer $t$ with
        $K_{t,t}\notin\mathcal H$, but $W_{t\times t}\in\mathcal H$ for every $t$,
        then $\mathbf{tw}^{\mathcal H}_\alpha(n)=\tilde\Theta(\sqrt n)$.
        \item\label{itm:linear} If $K_{t,t}\in\mathcal H$ for every $t$, then
        $\mathbf{tw}^{\mathcal H}_\alpha(n)=\Theta(n)$.
    \end{enumerate}

\end{theorem}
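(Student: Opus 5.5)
The classification has three parts, and I would prove them separately. Part~\ref{itm:subpoly} is the heart of the paper; parts~\ref{itm:sqrt} and~\ref{itm:linear} follow from known results together with a separator argument.

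\textbf{Part~\ref{itm:linear}.} The lower bound comes from $\mathrm{tw}_\alpha(K_{t,t})=t$ with $t=\lfloor n/2\rfloor$, since $K_{t,t}\in\mathcal H$ for every $t$. The upper bound $\mathrm{tw}_\alpha(G)\le n$ is trivial: take the single-bag decomposition.

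\textbf{Part~\ref{itm:sqrt}.} The lower bound $\Omega(\sqrt n)$ holds because $W_{t\times t}\in\mathcal H$ is bipartite with treewidth $t$. Hence $\mathrm{tw}_\alpha(W_{t\times t})\ge t/2$ on $\Theta(t^2)$ vertices.

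For the upper bound, let $G$ be $K_{t,t}$-induced-minor-free. I would use the known theorem (Korhonen; Bonnet et al.) that such graphs have balanced separators dominated by, or with independence number at most, $O_t(\sqrt n\,\mathrm{polylog}\, n)$. A natural route is to show that $K_{t,t}$-induced-minor-free graphs are ``string-like'': they admit balanced separators $S$ with $\alpha(G[S])=\tilde O_t(\sqrt n)$, for instance via the sublinear separator theorem for $K_{t,t}$-induced-minor-free graphs in terms of edges combined with a degeneracy-style argument. The standard recursion, which adds the separator to all bags below it, then gives tree-independence $\tilde O(\sqrt n)$. The independence numbers add along a logarithmic-depth recursion, at the cost of at most an extra $\log n$ factor; this is absorbed by $\tilde\Theta$.

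\textbf{Part~\ref{itm:subpoly}.} This is the main theorem stated in the abstract: $\{K_{t,t},W_{t\times t}\}$-induced-minor-free graphs have $\mathrm{tw}_\alpha\le 2^{O((\log n)^{1-\epsilon})}$. My plan follows the balanced-separator paradigm. It suffices to show that for every weight function, $G$ has a balanced separator coverable by $k=2^{O((\log n)^{1-\epsilon})}$ vertex neighbourhoods, or more weakly one with $\alpha\le k$. The standard conversion then loses only a constant or polylogarithmic factor.

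To get such a separator, I would argue by induction on a scale parameter. Let $f(n)$ be the minimum $k$ that works on $n$-vertex graphs in the class. Suppose there is no small separator. Then the $K_{t,t}$-freeness yields a large structure, such as a ``squiggly'' $K_{t,t}$-type object or a large $(A,B)$-packing of paths or connected sets, because a large balanced well-linked set must be realized by many disjoint connected pieces. The macros in the paper (squiggly $K_{t,t}$, $(A,B)$-packings, coarse treewidth balls, strong barrier) indicate this machinery.

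Wall-freeness should then force these pieces to interact in a bounded pattern. Using coarse treewidth on a ball structure, I would contract the pieces and recurse on a graph with $n^{1-\delta}$ vertices, or on a graph in which the separator size grows only multiplicatively. This gives a recurrence of the form
\[
f(n)\le c\cdot f\bigl(n^{1-\delta}\bigr)^{1+\gamma},
\]
or $\log f(n)\le (\log n)^{1-\epsilon}$ via $\log f(n)\le C\log f(n/2^{(\log n)^{\epsilon}})+\cdots$. Either form solves to $2^{O((\log n)^{1-\epsilon})}$.

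The main obstacle is this last step. I need a contraction or recursion in which the loss per level is only multiplicative by a constant, while the number of levels is $(\log n)^{1-\epsilon}$ rather than $\log n$. I would first try to get the shrinkage from a polynomial-size packing argument: a separator with $\alpha>k$ yields $k^{\Omega(1)}$ disjoint connected sets pairwise joined, which would produce $K_{t,t}$ or $W_{t\times t}$ unless it is ``thin''.

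Finally, combining the lower bounds of parts~\ref{itm:sqrt} and~\ref{itm:linear} with the upper bounds gives the trichotomy. The three cases are mutually exclusive and exhaustive because the wall case needs $W_{t\times t}\in\mathcal H$ for all $t$.
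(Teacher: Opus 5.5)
\textbf{Overall.} Your Part~3, the lower bound in Part~2 and the exhaustiveness argument are fine. So is your $O(\log n)$-depth recursion: vertex-balanced separators with $\alpha\le k$ in every induced subgraph give $\mathrm{tw}_\alpha=O(k\log n)$. The gaps are in both upper bounds.

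\textbf{Part~2 upper bound.} You assume that $K_{t,t}$-induced-minor-free graphs have balanced separators with $\alpha=\tilde O(\sqrt n)$. That is essentially \Cref{thm:biclique}, which the paper presents as new, and your suggested derivation does not work. The class contains all cliques, so there is no degeneracy, and the bound $O(t^3\sqrt{|E(G)|})$ of \Cref{prop:km_separator} can be $\Theta(n)$. Also, being dominated by few vertices says nothing about $\alpha$.

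The missing idea is to pass first to an induced subgraph of small clique number. Since $G$ is $\overline{2K_t}$-free, \Cref{prop:containers1} gives a $(1,a)$-container family with $a=O(\log^4 n)$. Then \Cref{prop:tw_and_talpha} shows that if $\mathrm{tw}_\alpha(G)>f\cdot\mathrm{polylog}(n)$, there are an induced subgraph $G'\subseteq G$ with $\omega(G')=O(\log^{14}n)$ and an independent set $I$ with no $(I,\frac12)$-balanced separator of \emph{size} below $f$. Polynomial degree-boundedness (\Cref{prop:degree_bounded}) gives $|E(G')|=O(n\,\mathrm{polylog}\,n)$. \Cref{prop:km_separator}, with the $\log n$ upgrade of \Cref{lem:Y_balanced_sep}, then yields such a separator of size $\tilde O(\sqrt n)<f$ once $f$ is $\sqrt n$ times a large enough power of $\log n$. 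That is the contradiction.

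\textbf{Part~1.} Part~1 is not proved: you leave the decisive step open, and two claims around it are false.

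\emph{Neighbourhood covers do not bound $\alpha$.} A separator covered by $k$ closed neighbourhoods can have unbounded $\alpha$; a single star already does. The implication runs the other way: $\alpha(S)\le k$ implies $S$ is dominated by $k$ of its vertices. Bags coverable by $2^{O(\log^{1-\epsilon}n)}$ balls of radius $8$ are exactly what \Cref{prop:coarse_treewidth_our_class} already provides. Converting that into small $\alpha$ is the content of the paper.

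\emph{Your recurrences do not solve as claimed.} The recurrence $f(n)\le c\,f(n^{1-\delta})^{1+\gamma}$ only gives $\log f(n)=O((\log n)^{\beta})$ with $\beta=\log(1+\gamma)/\log\frac{1}{1-\delta}$. This is below $1$ only when $(1+\gamma)(1-\delta)<1$. Multiplying $\log f$ by a constant $C>1$ over $(\log n)^{1-\epsilon}$ levels bounds $\log f$ only by roughly $C^{(\log n)^{1-\epsilon}}$. What works is few levels with a sub-polynomial multiplicative loss on $f$ itself. \Cref{lem:recursion} gives $T(n)\le c(n)+\lambda_2^2\,T(16\gamma n/\ell)$ with $\lambda_2=2^{O(\log^{1-\epsilon}n)}$ and $\ell=2^{d\log^{1-\epsilon/2}n}$, hence only $O(\log^{\epsilon/2}n)$ levels.

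\emph{Missing ingredients.} Reaching that recursion needs four things absent from your sketch.
(i) The same reduction to polylogarithmic clique number, so that small-\emph{size} $(I,\frac12)$-balanced separators suffice.
(ii) The bounded-radius layered-sets lemma (\Cref{r distance lemma}), valid only when $\omega$ is small. It turns the radius-$8$ coarse decomposition into either a small separator or a long packing of pairwise anticomplete, internally coverable separators.
(iii) A light, $\gamma$-broad strong barrier (\Cref{thm:strong_barrier}, via \Cref{lem:squigglyktt} and \Cref{prop:bipartite_lemma}).
(iv) Strong slimness (\Cref{lem:pqslim}). It ensures that separating the at most $\lambda_2^2$ slim pairs of fragments inside the active components already separates $A$ from $B$; each such pair is an instance on at most $16\gamma n/\ell$ vertices.

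Your heuristic that many pairwise joined connected sets force $K_{t,t}$ or a wall ``unless thin'' replaces none of these.
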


\noindent 
The three cases are exhaustive and mutually exclusive.
An immediate consequence of \Cref{thm:mainClassification} is that the growth rate
of the tree-independence number of an induced-minor-closed class $\mathcal H$,
$\limsup_{n\to\infty}\frac{\log\mathbf{tw}^{\mathcal H}_\alpha(n)}{\log n}$, 
is always equal to $0$, $\tfrac12$, or $1$; no intermediate value can occur.

This trichotomy is, however, considerably coarser than its minor-closed
counterpart for treewidth.
There, each of the three regimes pins down $\mathbf{tw}^{\mathcal H}(n)$ up to a
constant factor, for every class.
Here the sub-polynomial regime~(\ref{itm:subpoly}) is left much wider: a class in
it may have tree-independence number anywhere from $O(1)$ to
$2^{(\log n)^{1-\epsilon}}$, and \Cref{thm:mainClassification} does not
distinguish between these.
Determining the exact tree-independence number within this regime remains open.

Our main technical contribution is the tree independence upper bound underlying Theorem~\ref{thm:mainClassification}, point~(\ref{itm:subpoly}). 
Towards this, for every positive integer $t$, define $\calc_t$ to be the set of all $\{K_{t,t}, W_{t\times t}\}$-induced minor free graphs. We show the following theorem. 
\begin{theorem}\label{thm:atw_bound}
    For every positive integer $t$, there exists a positive integer $\nu'(t)$ and real $\epsilon(t) > 0$ such that, for every graph $G\in\calc_t$ on $n$ vertices,
    \[
        \atw{G} \ \ \leq\ \ 2^{\nu'(t)\log^{1-\epsilon(t)}n}.
    \]
\end{theorem}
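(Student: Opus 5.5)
The plan is to prove the bound through balanced separators, using the standard link between balanced separators and tree-independence. The link is the following: if every induced subgraph $H$ of $G$, and every weight function (equivalently, every set $Y\subseteq V(H)$), admits a $\frac12$-balanced separator $S$ that is covered by few cliques, or more generally has small $\alpha(H[S])$ summed appropriately, then $\atw{G}$ is bounded by a constant multiple of that bound. So it suffices to show that every $G\in\calc_t$ and every $Y\subseteq V(G)$ admit a $Y$-balanced separator $S$ with $\alpha(G[S])\le 2^{c(\log n)^{1-\epsilon}}$. The macros in the preamble (\balpack, \abpack, coarse treewidth) suggest organizing this via packings: either there is a large packing of pairwise far-apart, "anticomplete" connected pieces that together capture a constant fraction of $Y$, or we find a small separator directly.

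\textbf{Step 1: recursion on $\log n$.} Set $L=(\log n)^{1-\epsilon}$. I would prove the stronger statement that either $G$ has a separator with $\alpha\le 2^{O(L)}$, or $G$ contains a structure (a \skt{t}-type object or a large $(A,B)$-packing) that forces a $K_{t,t}$ or $W_{t\times t}$ induced minor. The recursion works as follows. Find a separator $S_0$ that is the union of few balls of bounded radius; this uses the coarse-treewidth result for the class. Neighbourhoods of balls are not independent-bounded, however. The idea is to replace each ball by the closed neighbourhood of a few vertices in the interior, and then to use $K_{t,t}$-freeness. A vertex set whose neighbourhood has large $\alpha$ yields many anticomplete paths to an independent set. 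Combined with $K_{t,t}$-freeness, a Ramsey/bipartite lemma gives a polynomial tradeoff: an independent set of size $k$ in $N(v)$ allows us to pass to a subinstance of size $n/k^{\delta}$.

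\textbf{Step 2: the tradeoff.} The key quantitative lemma I aim for has this form. In $G\in\calc_t$, for any $Y$, either there is a balanced separator dominated by $2^{O(\log^{1-\epsilon} n)}$ vertices (then $\alpha(N[D])$ is handled by recursing on each $N(v)$, which has fewer "levels"), or there is an induced subgraph on at most $n^{1-\delta}$ vertices that still requires the separator. The dominated-separator result for $\{K_{t,t},W_{t\times t}\}$-free graphs, which I take from the earlier squiggly-$K_{t,t}$ and $(p,q)$-slim lemmas, gives domination number polylogarithmic. The remaining task is to bound $\alpha(N(v))$. For this we recurse inside $G[N(v)]$, which is again in $\calc_t$, and separate $N(v)$ with respect to the trace of $Y$. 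Solving the recursion $f(n)\le \mathrm{polylog}(n)\cdot f(n^{1-\delta})$ over $\log\log n/\delta$ levels gives $f(n)\le 2^{O(\log\log n)^2/\delta}$, which is too good to be true. I therefore expect the real recursion to lose a multiplicative $2^{\mathrm{poly}(t)}$ per level, with a depth of about $(\log n)^{1-\epsilon}$, so that $f(n)\le 2^{\nu'(t)(\log n)^{1-\epsilon}}$. I would tune $\epsilon(t)$ to the exponents coming from the packing lemmas.

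\textbf{Main obstacle.} The hard step is the gain per level. I need to show that a separator whose neighbourhood has a huge independent set lets the instance shrink polynomially, rather than by a constant fraction. This requires extracting, from a large independent set in $N(S)$ together with absence of the wall and $K_{t,t}$, either a large $(A,B)$-packing that yields $K_{t,t}$, or a \skt{t}. I would attempt it with the bipartite lemma: the independent set and its private connected pieces form a bipartite incidence structure, and $K_{t,t}$-freeness (Kővári–Sós–Turán) bounds its density.
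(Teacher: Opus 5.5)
\textbf{There is a genuine gap.} The one step that matters is producing a $(Y,\frac12)$-balanced separator of small independence number, and your proposal gives no mechanism for it; you flag it yourself as the ``main obstacle''. Concretely:
\begin{enumerate}[label=(\roman*)]
\item The squiggly-$K_{t,t}$ and slimness lemmas do not give balanced separators dominated by polylogarithmically many vertices. No result of that strength is available for $\mathcal{C}_t$. Proposition~\ref{prop:coarse_treewidth_our_class} only covers bags by $2^{O(\log^{1-\epsilon}n)}$ balls of radius $8$, and earlier radius-$1$ (star) versions needed constant clique number.
\item Even granting a dominated separator $N[D]$, recursing inside $G[N(v)]$ on $Y\cap N(v)$ only separates $G[N(v)]$. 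It neither bounds $\alpha(N(v))$ nor yields a separator of $G$.
\item The tradeoff ``an independent set of size $k$ in $N(v)$ lets you pass to an instance of size $n/k^{\delta}$'' is asserted without proof. A star shows that a large independent set in $N(v)$ forces nothing.
\item The final recursion (loss $2^{\mathrm{poly}(t)}$ per level, depth $(\log n)^{1-\epsilon}$) is reverse-engineered from the target bound rather than derived.
\end{enumerate}
You also never deal with large cliques. Graphs in $\mathcal{C}_t$ can contain arbitrarily large cliques, and the bounded-radius layered-sets tool (Proposition~\ref{r distance lemma}) costs a polynomial in the clique number, so it cannot be run on $G$ directly.

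\textbf{The missing idea is the paper's first reduction, which removes the $\alpha$ issue altogether.} Since $G$ is $\overline{2K_t}$-free, Proposition~\ref{prop:containers1} gives a $(1,a)$-container family with $a=\mathrm{polylog}(n)$. Proposition~\ref{prop:tw_and_talpha} then turns large $\atw{G}$ into an induced subgraph $G'$ with $\omega(G')=\mathrm{polylog}(n)$ and an independent set $I$, all of whose $(I,\frac12)$-balanced separators have at least $f$ vertices. So it suffices to find $(I,\frac12)$-balanced separators of \emph{size} $\omega^{O(1)}2^{O(\log^{1-\epsilon/2}n)}$ (Lemma~\ref{lem:half_balanced_sep}), a treewidth-type question with polylogarithmic clique number. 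The paper obtains these in three steps:
\begin{itemize}
\item Coarse treewidth plus bounded-radius layered sets give either a small separator or a packing of $p$ pairwise anticomplete separators, each with few components.
\item Strong $(p,q)$-slimness (Lemma~\ref{lem:pqslim}) reduces the problem to separating $q$-slim pairs of fragments.
\item Those pairs are handled by strong barriers, built from Lemma~\ref{lem:squigglyktt} and Proposition~\ref{prop:bipartite_lemma}. That proposition gives an edge count linear in the number of fragments, which K\H{o}v\'ari--S\'os--Tur\'an does not provide.
\end{itemize}
The resulting recursion is $T(n)\le c+\lambda_2^2\,T(16\gamma n/\ell)$ with $\ell=2^{d\log^{1-\epsilon/2}n}$. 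The per-level loss $\lambda_2^2=2^{O(\log^{1-\epsilon}n)}$ is subpolynomial, not $2^{\mathrm{poly}(t)}$. The instance shrinks by the factor $\ell$ per level, so the depth is only $O(\log^{\epsilon/2}n)$, not $(\log n)^{1-\epsilon}$.
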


Theorem~\ref{thm:atw_bound} is a significant step towards a conjecture of Chudnovsky et al.~\cite{chudnovsky2026} that for every positive integer $t$ there exists a positive integer $d$ such that every graph $G \in \calc_t$ has $\mathrm{tw}_\alpha(G) \leq O((\log n)^d)$.
Together with classic Ramsey bounds for graphs excluding a $K_{t,t}$ as an induced subgraph~\cite{erdos1989ramsey} our result immediately implies that for every integer $t$ there exists an $\epsilon, \delta > 0$ such that for every graph $G$ in $\calc_t$, either $G$ contains a complete graph $K_\omega$ with $\omega \geq 2^{\delta((\log n)^{1-\epsilon})}$ as an induced subgraph or $\mathrm{tw}(G) \leq 2^{O((\log n)^{1-\epsilon})}$. This unifies and generalizes tree-independence and treewidth upper bounds shown in~\cite{dallard2024treewidth, dallard2024treewidth4, ChudnovskyHLS26, chudnovsky2024treeIII, chudnovsky2025tree, chudnovsky2025treeV, chudnovsky2025treeVI, chudnovsky2025treeVii, chudnovsky2024inducedXV, abrishami2023inducedIV, abrishami2022inducedI, walls, subpolytw, korhonen2023grid, hajebi2026tree}, at the cost of sub-polynomial rather than constant or poly-logarithmic bounds. 
For each of the listed papers, to see that our bound generalizes the one obtained there it is sufficient to compare the forbidden structures required for the respective bound with the forbidden structures for $\calc_t$ from Theorem~\ref{thm:CtInTermsOfInducedSubgraphs}.

A few remarks are in order. First, several of the cited papers obtain asymptotically tight bounds in cases where being asymptotically tight matters. Second, our upper bounds only generalize the tree-independence or treewidth bounds proved in the listed papers, many of them also show other results that are of independent interest and are not necessarily covered (even with weaker bounds) by Theorem~\ref{thm:mainClassification}.
Finally, our proof of Theorem~\ref{thm:mainClassification} directly or indirectly invokes the bounds shown in~\cite{korhonen2023grid, subpolytw}, and hence, even though Theorem~\ref{thm:mainClassification} is a generalization of~\cite{korhonen2023grid, subpolytw} it does not give an alternative or independent proof of these two results.  

To complete the trichotomy we also give a new upper bound on the tree-independence number of $K_{t,t}$-induced-minor free graphs. 
\begin{theorem}\label{thm:biclique}
For every positive integer $t$ there exists a positive integer $d$ such that for every $K_{t,t}$-induced-minor free graph $G$, $\mathrm{tw}_\alpha(G) \leq O(\sqrt{n}(\log n)^d)$.
\end{theorem}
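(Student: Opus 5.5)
We follow the standard route from balanced separators to tree decompositions. It suffices to show the following separator statement. For every $K_{t,t}$-induced-minor free graph $G$ on $n$ vertices and every weight function $w$ on $V(G)$, there is a balanced separator $S$ that is covered by the closed neighbourhoods of $O(\sqrt{n}\,(\log n)^{d'})$ vertices. Equivalently, $S \subseteq N[X]$ with $|X| = \tilde O(\sqrt n)$, so that $\alpha(G[S]) \le \tilde O(\sqrt n)$ after a refinement step. We then convert such separators into a tree decomposition by the usual recursion, which loses at most another $O(\log n)$ factor. The one thing to watch is that the recursion must be applied to the set $W$ we are separating, with independence (not size) as the measure. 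This is the standard argument (Robertson–Seymour style, as used by Dallard et al.): balanced separators of independence $k$ for every weighting give $\mathrm{tw}_\alpha = O(k)$.

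To obtain the separator we plan to pass to a sparse structure. We pick a maximal independent set $I$ and assign every vertex to a neighbour in $I$, giving a partition of $V(G)$ into stars. We contract each star to a single vertex, obtaining an induced-minor graph $G'$ on $|I| \le n$ vertices. The hope is that $G'$ is then sparse. For this we use the known fact (Bonnet et al., and Korhonen-style results on $K_{t,t}$-induced-minor-free graphs) that $K_{t,t}$-induced-minor free graphs with no large clique are $K_{s,s}$-subgraph free for $s = s(t)$. That fact is stated for bounded clique number, however, and here cliques may be large. The honest replacement is to work with the ``star contraction'' and use a string-graph-like separator theorem. Concretely, we view the connected sets (stars) as regions and apply the Lee / Fox–Pach separator theorem for intersection graphs of connected subgraphs, i.e.\ induced minor models. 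This theorem gives separators with $O(\sqrt m)$ vertices, where $m$ is the number of edges of the contracted graph, and $K_{t,t}$-freeness forces $m = O(n \cdot \mathrm{polylog}\, n)$. Korhonen and Lokshtanov show exactly this: $K_{t,t}$-induced-minor free (indeed string-like) graphs with $n$ vertices have $O_t(n)$ edges once $K_{t,t}$ is excluded as a subgraph. A separator of the contracted graph with $\tilde O(\sqrt n)$ stars then lifts to a separator of $G$ covered by $\tilde O(\sqrt n)$ closed neighbourhoods.

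The main obstacle is the last step. A separator covered by $\tilde O(\sqrt n)$ closed neighbourhoods does not automatically have small independence number, because a neighbourhood can contain a large independent set. To handle this, we would use that $K_{t,t}$-induced-minor freeness bounds how the neighbourhoods of the centres interact with an independent set. Specifically, if an independent set $J$ inside $N(X)$ were much larger than $|X| \cdot \mathrm{polylog}\, n$, then a bipartite Ramsey / Kővári–Sós–Turán argument on the bipartite incidence graph between $X$ and $J$ would produce a $K_{t,t}$ induced minor, after contracting along the star structure. If this direct bound fails, the fallback is to iterate: we apply the separator argument again inside each neighbourhood with a geometrically decreasing budget. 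This costs an extra $\log n$ factor per level over $O(\log n)$ levels, which is absorbed into $(\log n)^d$.
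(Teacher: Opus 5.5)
Your reduction from balanced separators of small independence number to a tree decomposition is fine: balancing on a maximum independent set $J$ of the current set $W$ gives $\alpha(W\cap C)\le |J|/2+\alpha(S)$ for every component $C$ of $G-S$. The construction of the separators, however, has two genuine gaps.

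\emph{The contracted graph need not be sparse.} The contracted graph is only $K_{t,t}$-\emph{induced-minor}-free, not $K_{s,s}$-subgraph-free, and large cliques of $G$ can survive the contraction. For example, take a clique $q_1,\dots,q_k$ and attach a pendant vertex $p_i$ to each $q_i$. This graph is chordal, hence $K_{2,2}$-induced-minor-free, and $\{p_1,\dots,p_k\}$ is a maximal independent set. Contracting the (forced) stars $\{p_i,q_i\}$ gives $K_{n/2}$, so $m=\Theta(n^2)$ and an $O(\sqrt m)$ separator consists of $\Theta(n)$ stars. The sparsity results you have in mind (e.g.\ Proposition~\ref{prop:degree_bounded}) need the clique number, or the largest $K_{s,s}$ subgraph, to be small, and nothing in your argument arranges that. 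The Lee/Fox--Pach theorem concerns string graphs, not touching graphs of connected subgraphs of an arbitrary graph. For $K_{t,t}$-induced-minor-free graphs the available separator (Proposition~\ref{prop:km_separator}) is again $O(\sqrt{|E|})$, which is useless when $G$ is dense.

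\emph{The final step is false as stated.} The star $K_{1,n-1}$ is $K_{2,2}$-induced-minor-free, since all its induced minors are forests. Yet the closed neighbourhood of its centre contains an independent set of size $n-1$. So no K\H{o}v\'ari--S\'os--Tur\'an argument can turn a large independent set inside few neighbourhoods into a $K_{t,t}$ induced minor. The ``iterate inside neighbourhoods'' fallback is not an argument.

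The missing idea is to reduce to graphs of polylogarithmic clique number and to measure separators by their number of vertices. The paper proceeds as follows:
\begin{enumerate}
\item Since $G$ is $\overline{2K_t}$-free, Proposition~\ref{prop:containers1} gives a $(1,a)$-container family $\mathcal F$ with $a=O(\log^4 n)$ and $|\mathcal F|\le (n+1)^a$.
\item Proposition~\ref{prop:tw_and_talpha} shows that if $\mathrm{tw}_\alpha(G)$ exceeds $f\cdot\mathrm{polylog}\,n$, then some induced subgraph $G'$ has $\omega(G')=O(\log^{14}n)$ and an independent set $I$ such that every $(I,\frac12)$-balanced separator $S$ of $G'$ satisfies $|S|\ge \mathrm{cov}_{\mathcal F}(S)\ge f$.
\item In $G'$, Proposition~\ref{prop:degree_bounded} gives $|E(G')|=O(n\log^{14d_0}n)$.
\item The Korhonen--Lokshtanov separator, upgraded to $(I,\frac12)$-balance at the cost of a $\log n$ factor (Lemma~\ref{lem:Y_balanced_sep}), then yields an $(I,\frac12)$-balanced separator with $O(\sqrt n\log^{7d_0+1}n)$ vertices. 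This contradicts the choice $f=\lceil\sqrt n(\log n)^{7d_0+2}\rceil$.
\end{enumerate}
This route sidesteps both of your obstacles. Large cliques are absorbed by the containers, each of which has small independence number, and one never needs to bound the independence number of a union of neighbourhoods.
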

Theorem~\ref{thm:biclique} follows quite easily from a combination of known results: that every graph with large tree-independence contains an induced subgraph with large treewidth and poly-logarithmic clique number~\cite{chudnovsky2026}, that every $K_{t,t}$-induced minor free graph with poly-logarithmic clique number has poly-logarithmic average degree~\cite{bourneuf2024polynomial}, and that every $K_{t,t}$-induced minor free graph $G$ has a separator with $O(\sqrt{|E(G)|})$ edges~\cite{korhonen2024induced}.

\paragraph{Algorithmic Consequences.}
Lokshtanov, Pilipczuk and Rzazewski~\cite{treeIndCMSOAlgFast} (see also~\cite{LimaMMORS24}) gave an algorithm for the following problem, called {\sc Max Weight}-$(\phi, r)$-{\sc Induced Subgraph}. 
Here input is a graph $G$, integer $r$ and CMSO$_2$-formula $\phi$, and a weight function $w : V(G) \rightarrow \mathbb{N}$.
The task is to find a vertex subset $S$ such that $\mathrm{tw}(G[S]) \leq r$, $G[S]$ satisfies the property $\phi$, and $w(S) = \sum_{v \in S} w(v)$ is maximized, or determines that no such set $S$ exists. 
We refer to~\cite{treeIndCMSOAlgFast} and references within for a definition of CMSO$_2$.
Here it suffices to know that {\sc Max Weight}-$(\phi, r)$-{\sc Induced Subgraph} generalizes a host of well-known problems, including {\sc Independent Set} and {\sc Feedback Vertex Set}.
The algorithm of Lokshtanov, Pilipczuk and Rzazewski~\cite{treeIndCMSOAlgFast} runs in time  $f(r, \phi)n^{g(r, \phi)k}$ if a tree decomposition with tree-independence $k$ is given as input. 
Additionally, if the tree-independence number of $G$ is at most $k$, a tree-decomposition with tree-independence number at most $8k$ can be computed in time $2^{O(k^2)}n^{O(k)}$ using the algorithm of Dallard et al.~\cite{dallard2025computing} (see also~\cite{yolov2018minor}).

%
%
%
Dallard et al.~\cite{dallard2024treewidth} observed that a $r$-colorable graph with tree-independence $k$ has treewidth at most $rk$.
A tree decomposition of tree-width at most $2rk$ can then be computed in time $2^{O(rk)}n$, using the algorithm of Korhonen~\cite{korhonen2023single}. It is well-known that $r$-{\sc Coloring} can be solved in time $O(r^{k'})n$ if a tree decomposition of tree-width $k'$ is given as input~\cite{cygan2015parameterized}.
Together with our upper bound on the tree-independence of graphs in $\calc_t$, the algorithms above yield the following theorem. 

\begin{theorem}\label{thm:mainAlgorithmic}
For every CMSO$_2$ formula $\phi$ and integers $t$, $r$ there exists an $\epsilon > 0$  such that 
{\sc Max Weight}-$(\phi, r)$-{\sc Induced Subgraph} and $r$-{\sc Coloring} have algorithms with running time
$2^{2^{O((\log n)^{1-\epsilon})}}$
on graphs in $\calc_t$.
\end{theorem}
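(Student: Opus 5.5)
The plan is to combine Theorem~\ref{thm:atw_bound} with the algorithmic tools recalled just before the statement, treating the two problems separately. Fix $t$ and let $G \in \calc_t$ have $n$ vertices. By Theorem~\ref{thm:atw_bound} we have $k := \atw{G} \le 2^{\nu'(t)\log^{1-\epsilon(t)} n}$. We do not know $k$ exactly, so we will run the decomposition algorithm of Dallard et al.~\cite{dallard2025computing} with the guess $k_0 = \lfloor 2^{\nu'(t)\log^{1-\epsilon(t)} n}\rfloor$. This upper bound is computable from $n$ and $t$, and is valid because $k \le k_0$. In time $2^{O(k_0^2)} n^{O(k_0)}$ the algorithm returns a tree decomposition whose bags have independence number at most $8k_0$.

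To bound this cost, note that $k_0^2 = 2^{2\nu'(t)\log^{1-\epsilon} n}$ and that $n^{O(k_0)} = 2^{O(k_0 \log n)}$. Since $\log n = 2^{\log\log n}$ and $\log\log n = o(\log^{1-\epsilon} n)$, the product $k_0\log n$ is at most $2^{O(\log^{1-\epsilon'} n)}$ for any $0<\epsilon'<\epsilon$; in fact $\epsilon' = \epsilon$ works once $\log\log n$ is absorbed into the constant in the exponent. Hence the total cost is $2^{2^{O((\log n)^{1-\epsilon})}}$. The small-$n$ cases contribute only a constant and are absorbed.

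\textbf{Max Weight-$(\phi,r)$-Induced Subgraph.} Feed the decomposition, with parameter $8k_0$, into the algorithm of~\cite{treeIndCMSOAlgFast}. Its running time is $f(r,\phi)\, n^{g(r,\phi)\cdot 8k_0} = f(r,\phi)\, 2^{O(k_0\log n)}$. With $r$ and $\phi$ fixed, this is again $2^{2^{O((\log n)^{1-\epsilon})}}$ by the same estimate.

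\textbf{$r$-Coloring.} First compute the tree-independence decomposition as above. Then use the observation of~\cite{dallard2024treewidth}: if $G$ is $r$-colorable, every bag with independence number at most $8k_0$ has at most $8rk_0$ vertices, since each color class within the bag is independent. So either some bag has more than $8rk_0$ vertices, and we correctly answer that $G$ is not $r$-colorable, or we already hold a decomposition of width $O(rk_0)$. Alternatively, one can run Korhonen's algorithm~\cite{korhonen2023single} with the bound $rk$ and reject if it reports larger treewidth. In either case, the standard dynamic programming over the decomposition runs in time $O(r^{O(rk_0)}) n$, which is at most $2^{2^{O((\log n)^{1-\epsilon})}}$ for fixed $r$.

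There is no genuine obstacle here. The step that needs the most care is the exponent bookkeeping: the factor $n^{O(k)}$, together with the outer $g(r,\phi)$ and $r$ multipliers, must be absorbed into the stated form $2^{2^{O((\log n)^{1-\epsilon})}}$. This absorption uses the fact that $\log\log n$ is dominated by $(\log n)^{1-\epsilon}$. The other point to watch is that the guessed $k_0$ is an explicit function of $n$, so no knowledge of the true value of $\atw{G}$ is required.
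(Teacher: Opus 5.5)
Your proposal is correct and follows the paper's argument: it plugs the bound of Theorem~\ref{thm:atw_bound} into the Dallard et al.\ approximation algorithm and the Lokshtanov, Pilipczuk and Rzazewski algorithm, and for $r$-\textsc{Coloring} it uses the observation that an $r$-colorable graph of tree-independence $k$ has treewidth at most $rk$. The only cosmetic difference is in the coloring case. You read the width bound directly off the already-computed decomposition, rejecting if some bag has more than $8rk_0$ vertices, whereas the paper computes a treewidth decomposition with Korhonen's $2$-approximation in time $2^{O(rk)}n$; both routes fit within the stated $2^{2^{O((\log n)^{1-\epsilon})}}$ bound.
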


The running time of the algorithms of Theorem~\ref{thm:mainAlgorithmic} is strongly sub-exponential in the sense that it grows slower than $2^{n^\delta}$ for {\em every} $\delta > 0$. Thus, if any of the problems covered by Theorem~\ref{thm:mainAlgorithmic}, including {\sc Independent Set} and {\sc Feedback Vertex Set}, are \textsf{NP}-hard on $\{K_{t,t}, W_{t\times t}\}$-induced-minor free graphs for some fixed integer $t$, then every problem in \textsf{NP} admits algorithms with running time $O(2^{n^\delta})$ for every $\delta > 0$, a complexity theoretic collapse almost as dramatic as \textsf{P} =  \textsf{NP}. 
We remark that the existence of a {\em polynomial} time algorithm for {\sc Independent Set} on even-hole-free graphs, a class much less general than $\calc_t$, remains a prominent open problem~\cite{adler2017rank,cameron2018structure,conforti2015stable,husic2019independent,le2018coloring}. 

\paragraph{Proof Overview.}
We give a brief outline of the proof. As the proof relies on ideas from multiple previous works, the overview is interleaved with a brief summary of the most relevant existing literature.
Gartland and Lokshtanov~\cite{gartland2020independent} introduced the ``layered sets'' technique in the context of branching algorithms for the {\sc Independent Set} problem. 
Chudnovsky et al.~\cite{ChudnovskyHLS26} observed that the layered sets technique can be used to prove structural bounds for the tree-independence of certain, quite restricted, graph classes.
Subsequently, Chudnovsky et al.~\cite{chudnovsky2025treeV} showed that the structural lemma underlying the layered set technique to bound tree-independence holds in $\calc_t$ for all $t$. In other words, the layered sets technique can be used in all induced-minor closed classes where one could hope to prove sub-polynomial upper bounds on tree-independence. 
In a recent paper, Chudnovsky et al.~\cite{chudnovsky2026coarseLayeredSets} proved that on graphs in $\calc_t$ that do not have large cliques (cliques of poly-logarithmic size are ok), the structural lemma underlying the layered set technique can be generalized to constant radius balls (see Proposition~\ref{r distance lemma}).

The restriction to graphs with poly-logarithmic clique number required for Proposition~\ref{r distance lemma} to work might seem to limit its applicability for proving tree-independence bounds for graphs in $\calc_t$ with arbitrary clique number. However, Chudnovsky et al.~\cite{chudnovsky2026} show that in order to prove an upper bound for the tree-independence number of graphs in $\calc_t$, it is sufficient to upper bound the tree-independence number (or treewidth) of graphs in $\calc_t$ with poly-logarithmic clique number (see Proposition~\ref{prop:tw_and_talpha}). 
In light of this, Chudnovsky et al.~\cite{subpolytw} gave a sub-polynomial $2^{O((\log n)^{1-\epsilon})}$ upper bounds on the tree-width of graphs in $\calc_t$ with {\em constant} clique number. In an attempt to bridge the gap between constant and poly-logarithmic clique number, Chudnovsky et al.~\cite{coarsetw} show that for every integer $t$ there exists an $\epsilon > 0$ such that every graph in $\calc_t$ has a tree decomposition where every bag can be covered by $2^{O((\log n)^{1-\epsilon})}$ balls of radius at most $8$ (see Proposition~\ref{prop:coarse_treewidth_our_class}). The proof of this result is a direct invocation of the main result of~\cite{subpolytw} coupled with a graph partitioning scheme inspired by Scott~\cite{ScottLayerings} and Klein et al.~\cite{klein1993excluded}.

The main reason that the treewidth bound of Chudnovsky et al.~\cite{subpolytw} only works for graphs with {\em constant}, rather than poly-logarithmic, clique number is that Chudnovsky et al.~\cite{subpolytw} rely on a recursive contraction scheme pioneered by Korhonen~\cite{korhonen2023grid} (see also~\cite{bonnet2023treewidth}). The contraction scheme makes the $\epsilon$ in the $2^{O((\log n)^{1-\epsilon})}$ upper bound drop at an (at least) exponential rate as a function of the clique number, rendering the method toothless already for graphs with clique size $\log\log n$. The contraction scheme is used by Chudnovsky et al.~\cite{subpolytw} to (essentially) guarantee that the graph $G$ under consideration has a tree decomposition in which every bag is covered by $2^{O((\log n)^{1-\delta})}$ stars (i.e. balls of radius $1$). This tree decomposition is used in conjunction with the layered sets technique to give a tree decomposition of $G$ of width at most $2^{O((\log n)^{1-\epsilon})}$, where $\epsilon$ is lower bounded in terms of $t$ and $\delta$. 

The very short summary of the proof of this paper is that we work with graphs in $\calc_t$ with poly-logarithmic clique number, and carry through the proof of Chudnovsky et al.~\cite{subpolytw}, but avoid the recursive contraction scheme which causes the exponential decay in $\epsilon$.
The recursive contraction scheme was used to guarantee a tree decomposition in which every bag is covered by $2^{O((\log n)^{1-\delta})}$ balls of radius $1$. Instead we invoke Chudnovsky et al.~\cite{coarsetw} (Proposition~\ref{prop:coarse_treewidth_our_class}) to obtain a tree decomposition where every bag can be covered by $2^{O((\log n)^{1-\delta})}$ balls of radius at most $8$.
To turn this tree decomposition into one where every bag has $2^{O((\log n)^{1-\epsilon})}$ {\em vertices}, we use the layered set technique for constant radius balls (Section~\ref{sec:layer}), powered by the recent result of Chudnovsky et al.~\cite{chudnovsky2026coarseLayeredSets} (Proposition~\ref{r distance lemma}).

Several technical complications occur, and in order to overcome them we prove and use two new approximate characterizations of the class $\calc_t$. The first (see Lemma~\ref{lem:squigglyktt}) is that for every positive integer $t$ there exists a $t'$ such that for every graph $G$ in $\calc_t$ the graph $G^\circ$ does not contain an induced $K_{t',t'}$. Here $G^\circ$ is the graph with a vertex for every connected vertex set in $G$, and two vertices in $G^\circ$ are adjacent if the corresponding sets in $G$ have non-empty intersection or an edge between them. 
This characterization in terms of $G^\circ$ is interesting in its own right, as Dallard et al.~\cite{dallard2024treewidth} have previously observed that $\atw{G^\circ} = \atw{G}$.
The second (see Lemma~\ref{lem:pqslim}) is that for every $t$ there exist integers $p$, $q$, such that for every graph $G$ in $\calc_t$ there does not exist a collection of $p$ pairwise disjoint and anti-complete sets $S_1, \ldots, S_p$, such that for every pair $S_i$, $S_j$, there are at least $q$ internally disjoint and anti-complete paths from $S_i$ to $S_j$.


\paragraph{Paper Organization.}
\cref{sec:prelim} contains notation and preliminaries. \cref{sec:approxChar} proves the approximate characterizations of \(\calc_t\) used later: first via ``squiggly complete bipartite graphs'', and then via ``strong slimness''. \cref{sec:layer} develops the bounded-radius layered-sets
lemma and specializes it to balanced separators and \(A\)--\(B\) separators. \cref{sec:barrier} then proves the existence of a combinatorial object in the class $\calc_t$ that we need, called \emph{strong-barriers}. \cref{sec:absep} then uses \emph{strong-barriers} to show that pairs of vertex subsets that are ``slim'' can be easily separated in $\calc_t$. \cref{sec:mainthm} consolidates the results from the previous sections to prove \cref{thm:atw_bound}, and \cref{sec:8} contains the proof of \cref{thm:biclique}. We conclude the paper with~\cref{sec:conclusion}.

 \section{Preliminaries and notation}\label{sec:prelim}
 
\medskip


All graphs in this paper are finite and simple. If \(X\subseteq V(G)\), then
\(G[X]\) denotes the subgraph of \(G\) induced by \(X\), and \(G\setminus X\)
denotes \(G[V(G)\setminus X]\). If \(H\) is an induced subgraph of \(G\), we
write \(H\subseteq G\). In this paper, we use induced subgraphs and their vertex sets interchangeably.
For \(X,Y\subseteq V(G)\), we say that \(X\) and \(Y\)
are \defn{anticomplete} if there is no edge of \(G\) with one end in \(X\) and
one end in \(Y\). We say that \(X\) and \(Y\) \defn{touch} if
\(N_G[X]\cap Y\neq\emptyset\). All logarithms are base \(2\).

For \(v\in V(G)\) and an integer \(r\geq 0\), let \(N^r_G[v]\) denote the set
of vertices at distance at most \(r\) from \(v\) in \(G\). For
\(X\subseteq V(G)\), set
\[
N^r_G[X]=\bigcup_{x\in X}N^r_G[x].
\]
We also write \(B_r(X,G)\) for \(N^r_G[X]\). If \(X,Y\subseteq V(G)\), then
\(\dist_G(X,Y)\) is the minimum length of a path in \(G\) with one end in \(X\)
and one end in \(Y\), with value \(\infty\) if no such path exists.

For a graph class \(\mathcal H\), we write \(\mathbf{tw}^{\mathcal H}_\alpha(n)\) for
the maximum value of \(\atw{G}\) over all graphs \(G\in\mathcal H\) with at
most \(n\) vertices.

For a positive integer \(t\), we denote by \defn{\(\mathcal F_t\)} the class of
\(K_{t,t}\)-induced-minor-free graphs. Recall that \defn{\(\calc_t\)} denotes the
class of graphs with no induced minor isomorphic to \(K_{t,t}\) or \(W_{t\times t}\). Thus
\(\calc_t\subseteq \mathcal F_t\). Both classes are hereditary.

Let \(G\) be a graph and let \(S\subseteq V(G)\). We say that \(\hat S\) is an
\defn{\(r\)-cover} of \(S\) in \(G\) if \(S\subseteq N^r_G[\hat S]\). The set
\(S\) is \defn{\((k,r)\)-coverable} in \(G\) if it has an \(r\)-cover
\(\hat S\) in \(G\) with \(|\hat S|\leq k\). We call such a set \(\hat S\) a
\defn{\((k,r)\)-cover} of \(S\). We say that \(S\) is \defn{internally
\((k,r)\)-coverable} in \(G\) if it has a \((k,r)\)-cover contained in \(S\).

Let \(w:V(G)\to \mathbb R_{\geq 0}\). For \(X\subseteq V(G)\), write
\(w(X)=\sum_{v\in X}w(v)\). We call \(w\) a \defn{weight function} if
\(w(V(G))\leq 1\). A graph \(G\) is \defn{\((w,\varepsilon)\)-balanced} if
every connected component \(C\) of \(G\) satisfies \(w(C)\leq\varepsilon\). A
set \(X\subseteq V(G)\) is a \defn{\((w,\varepsilon)\)-balanced separator} if
\(G\setminus X\) is \((w,\varepsilon)\)-balanced.

As a slight abuse of notation, for a set $Y\subseteq V(G)$ we may denote by $Y$ the weight function which gives weight $1/|Y|$ (if $Y\neq \mt$) to all vertices in $Y$ and $0$ elsewhere. Therefore, for \(Y\subseteq V(G)\) and \(\varepsilon\in(0,1)\), a set \(X\subseteq V(G)\)
is a \defn{\((Y,\varepsilon)\)-balanced separator} if every connected component
of \(G\setminus X\) contains at most \(\varepsilon |Y|\) vertices of \(Y\). 

We use the following standard fact (see, for example~\cite{diestel2025graph}): 

\begin{proposition}\label{prop: central bag} Let $G$ be a graph and $w$ be a weight function on $G$. If $(T,\chi)$ is a tree decomposition of $G$, then some bag $\chi(x)$ is a $(w,\frac12)$-balanced separator in $G$.
\end{proposition}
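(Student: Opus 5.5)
The plan is to orient the tree $T$ edge by edge toward the side carrying too much weight, and then take a sink of this orientation. Weights are nonnegative and $w(V(G))\le 1$; the only axiom of tree decompositions we need is that bags on edges of $T$ separate the graph.

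For an edge $xy$ of $T$, removing it splits $T$ into two subtrees, $T_x$ containing $x$ and $T_y$ containing $y$. Let $V_x$ and $V_y$ be the unions of the bags of $T_x$ and $T_y$. The standard separation property states that $\chi(x)\cap\chi(y)$ separates $V_x\setminus \chi(x)\cap \chi(y)$ from $V_y\setminus\chi(x)\cap\chi(y)$. This holds because:
\begin{itemize}
\item a vertex lying in bags on both sides lies in $\chi(x)\cap\chi(y)$, by connectivity of its subtree;
\item an edge of $G$ between $V_x\setminus V_y$ and $V_y\setminus V_x$ would have to lie in some bag, which is impossible.
\end{itemize}
Hence every connected component $C$ of $G\setminus(\chi(x)\cap\chi(y))$ is contained entirely in $V_x\setminus V_y$ or in $V_y\setminus V_x$.

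Next we orient edges. Orient $xy$ from $x$ to $y$ if some component $C$ of $G\setminus \chi(x)$ with $w(C)>\frac12$ satisfies $C\subseteq V_y\setminus V_x$. Such a $C$ avoids $\chi(x)\supseteq\chi(x)\cap\chi(y)$, so it is a connected set in $G\setminus(\chi(x)\cap\chi(y))$ and lies on one side. It cannot meet $V_x\setminus V_y$ while avoiding $\chi(x)$ on the $x$ side only through the separation argument, so it lies on exactly one side; if it lies in $V_x\setminus V_y$ we do not orient toward $y$.

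We check that no edge is oriented both ways. If $xy$ were oriented both ways, there would be disjoint sets $C\subseteq V_y\setminus V_x$ and $C'\subseteq V_x\setminus V_y$, both of weight $>\frac12$. Their total weight would exceed $1\ge w(V(G))$, a contradiction.

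Now we locate the bag. Suppose that for every node $x$, the bag $\chi(x)$ is not a $(w,\frac12)$-balanced separator. Fix $x$ and pick a heavy component $C$ of $G\setminus\chi(x)$. Every vertex of $C$ lies in some bag other than $\chi(x)$, and $C$ is connected and disjoint from $\chi(x)$. So the nodes whose bags meet $C$ form a connected subtree of $T-x$; this is the usual argument, since edges of $C$ lie in common bags and each vertex's subtree avoids $x$. That subtree therefore lies in a single component of $T-x$, namely the one containing a neighbor $y$, and so $C\subseteq V_y\setminus V_x$. Thus every node has an out-edge. Because no edge is oriented in both directions, following out-edges in the finite tree $T$ never backtracks along the same edge, which is impossible in a tree. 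Hence some node has no out-edge, and its bag is the desired balanced separator.

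The only delicate point is this last step: showing that a heavy component of $G\setminus\chi(x)$ is confined to one branch of $T-x$. This is the standard connectivity argument sketched above.
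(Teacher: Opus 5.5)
Your proof is correct and is the standard argument: orient each tree edge toward the branch that contains a component of weight more than $\tfrac12$, observe that no edge gets both orientations because two disjoint heavy sets would exceed total weight $1$, and take a sink. This is exactly the textbook fact the paper cites from Diestel rather than proving it. Two small remarks: your separation paragraph and the muddled sentence in the orientation paragraph are not needed, since your final paragraph establishes $C\subseteq V_y\setminus V_x$ directly; and the sink follows more simply from the fact that $T$ has fewer edges than nodes, so some node has out-degree $0$.
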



\section{Approximate Characterizations of $\calc_t$.}\label{sec:approxChar}
When comparing the class $\calc_t$ against other classes it is often more convenient to use an approximate characterization of $\calc_t$ in terms of forbidden induced subgraphs, rather than forbidden induced minors. We begin this section by deriving such a classification.

Let $s,l$ be positive integers. An \defn{$(s,l)$-constellation} is a graph $C$ with an independent set $S_C$ of size $s$ such that $C-S_C$ has exactly $l$ components, each of which is a path that contains at least one neighbor of every vertex in $S_C$. Observe that an $(s,l)$-constellation forms a $K_{s,l}$-induced-minor.
We need the following, which follows immediately from the main result in \cite{tw16} by Chudnovsky, Hajebi, and Spirkl.

\begin{proposition}\label{prop:constellation}
    For all positive integers $s,l,r$, there is a positive integer $a=a(s,l,r)$ with the following property. 
    Let $G$ be a graph that contains a $K_{a,a}$-induced-minor. Then one of the following holds.
    \begin{itemize}
        \item There is an induced subgraph of $G$ isomorphic to either $K_{r,r}$, a subdivision of $W_{r\times r}$, or the line graph of a subdivision of $W_{r\times r}$.
        \item There is an $(s,l)$-constellation in $G$. 
    \end{itemize}
\end{proposition}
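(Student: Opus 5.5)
The plan is to reduce Proposition~\ref{prop:constellation} to the main theorem of Chudnovsky, Hajebi and Spirkl~\cite{tw16}, which, as I recall it, says: for all positive integers $s,l,r$ there is $a$ such that every graph containing $K_{a,a}$ as an induced minor contains one of the following as an induced subgraph: $K_{r,r}$; a subdivision of $W_{r\times r}$ or the line graph of such a subdivision; or an $(s,l)$-constellation. If the cited theorem is phrased this way, the proposition is a direct restatement, and the only work is to check that the definitions agree.

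\textbf{Step 1: matching the definitions.} I will compare the notion of constellation in~\cite{tw16} with the one given above. In~\cite{tw16} an $(s,l)$-constellation is, I believe, a pair $(S,\mathcal L)$ where $S$ is a stable set of size $s$ and $\mathcal L$ is a collection of $l$ pairwise disjoint and anticomplete induced paths, each disjoint from $S$ and each containing a neighbor of every vertex of $S$. Taking $C=G[S\cup\bigcup\mathcal L]$ gives an induced subgraph of $G$ in which $C-S$ has exactly the $l$ components given by the paths, since they are pairwise anticomplete. This is exactly the definition used here.

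\textbf{Step 2: the remaining outcomes.} I will check that the other outcomes of~\cite{tw16} appear in exactly the form stated: an induced $K_{r,r}$, an induced subdivision of $W_{r\times r}$, or the line graph of one. If~\cite{tw16} instead lists a complete graph $K_r$ or other variants, I will either absorb them or pass to a larger parameter $r'$, using that each such object contains the stated object. For instance, a larger wall subdivision contains a smaller wall subdivision as an induced subgraph. I will also check that the theorem in~\cite{tw16} has the required hypothesis of a large $K_{a,a}$ induced minor. If it is instead stated with large treewidth together with excluded structures, then a $K_{a,a}$ induced minor gives $\mathrm{tw}\ge a$, which still suffices.

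\textbf{Main obstacle.} I expect the main difficulty to be purely bookkeeping: the exact quantifiers and the form of the outcomes in~\cite{tw16}, for example a stable-set hypothesis or ordered paths. If the cited constellation requires something stronger, such as each path having ends adjacent to specific vertices, it still implies the weaker definition here. So in every version of the statement the implication goes in the needed direction, and no further graph-theoretic argument is required.
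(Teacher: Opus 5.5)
Your proposal is correct and matches the paper, which also gives no argument beyond noting that Proposition~\ref{prop:constellation} follows immediately from the main theorem of \cite{tw16}, with the $(s,l)$-constellation definition taken from there. Your contingency plans are not needed, and the treewidth fallback would not work as written: a $K_{a,a}$ induced minor gives large treewidth, but it does not give whatever exclusion hypotheses such a theorem would attach to the treewidth bound.
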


We also need the following theorem of Aboulker, Adler, Kim, Sintiari, and Trotignon~\cite{walls}, which allows us to translate wall-induced-minors into induced subgraphs.
\begin{proposition}\label{prop:wall}
    For all positive integers $t$, there is a  positive integer $b=b(t)$ with the following property. 
    Let $G$ be a graph  that contains a $W_{b\times b}$-induced-minor. Then $G$ contains as an induced subgraph either a subdivision of $W_{t\times t}$, or the line graph of a subdivision of $W_{t\times t}$.
\end{proposition}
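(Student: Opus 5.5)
The statement is Proposition~\ref{prop:wall}, attributed to Aboulker, Adler, Kim, Sintiari, and Trotignon~\cite{walls}. I would cite it rather than reprove it, but here is how I would prove it directly.

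\emph{Setup.} Fix a $W_{b\times b}$-induced-minor model in $G$. This is a collection of pairwise disjoint connected branch sets $\{X_v\}_{v\in V(W_{b\times b})}$ such that $X_u$ and $X_v$ touch exactly when $uv$ is an edge of the wall. Since $W_{b\times b}$ has maximum degree $3$, each branch set $X_v$ has at most three neighbouring branch sets, and distinct non-adjacent branch sets are anticomplete.

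\emph{Reduction inside each branch set.} Inside each $X_v$ I would pick a minimal connected subgraph that still touches all neighbouring branch sets. For a degree-$2$ vertex of the wall this is an induced path. For a degree-$3$ vertex it is one of two shapes: a subdivided claw (three induced paths joined at a centre), or a triangle with three pendant induced paths. Paths passing through degree-$2$ branch sets, and the attachment edges between consecutive branch sets, can also be shortened to induced paths.

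The only remaining interference is where the endpoint of one branch set has several neighbours in an adjacent branch set. To handle this I would first pass to a much larger wall, i.e.\ take $b$ huge compared with $t$, and restrict to a sub-wall using only every few rows and columns. Then each attachment can be made to behave like a single edge, after re-choosing the minimal connectors relative to the sub-wall.

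\emph{Ramsey step.} Each degree-$3$ branch vertex now has one of two local types: claw or triangle. Using a Ramsey/grid-colouring argument on the sub-wall, I would find a $W_{t\times t}$ sub-wall, obtained by taking every $c$-th brick for a suitable constant $c$, in which all branch vertices have the same type. If every branch vertex is a claw, we get an induced subdivision of $W_{t\times t}$. If every one is a triangle, we get the line graph of a subdivision of $W_{t\times t}$. Mixed types could alternatively be removed by routing: a claw centre can be converted to a triangle by passing to its neighbours, though this needs care.

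\emph{Main obstacle.} I expect the hardest step to be controlling the attachments between adjacent branch sets, where multiple edges can create chords. This is the place where passing to a sparser sub-wall, with $b$ polynomial or exponential in $t$, is essential.
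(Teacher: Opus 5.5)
Citing \cite{walls}, as you propose, is exactly what the paper does: it gives no proof of Proposition~\ref{prop:wall}. Your direct sketch, however, is not a proof, because its central step is missing. Choosing a minimal connected piece $T_v\subseteq X_v$ separately for each $v$ does not give a connected structure. The reason is that $T_v$ touches $X_u$ but need not touch the piece $T_u$ chosen there. Your claw/triangle dichotomy also holds only if legs of length zero are allowed. For example, the piece is an induced path when $X_v$ induces $p_1p_2p_3$ and the three neighbouring sets meet it only at $p_1$, $p_2$, $p_3$ respectively. Moreover, a vertex of a neighbouring set that attaches to a piece can see several of its vertices, which can turn a claw into a triangle or create chords. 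So the type is not a property of $X_v$ alone, and enlarging pieces to make them meet loses control of both chords and shape. Your remedy (pass to a sparser sub-wall and re-choose connectors so that each attachment behaves like a single edge) names no mechanism, and that mechanism is essentially the whole content of the proposition.

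One mechanism that works is the following.
\begin{itemize}
\item Take an \emph{induced} subgraph $W'$ of $W_{b\times b}$ that is a subdivision of a large wall in which every edge $e=xy$ becomes a path with at least four internal vertices. Discard the branch sets of vertices outside $W'$.
\item Let $X^e$ be the union of the branch sets of those internal vertices. Then $X^e$ is connected and touches only $X_x$ and $X_y$, and distinct such unions are anticomplete.
\item Let $Q_e$ be a shortest path in $G[X^e]$ from $N(X_x)\cap X^e$ to $N(X_y)\cap X^e$. Its interior is anticomplete to $X_x\cup X_y$, and it has at least four vertices. Cut it at a middle edge into a half at $x$ and a half at $y$, each with at least two vertices.
\item For a branch vertex $x$ of degree three, the far ends of its three halves have degree one in the graph induced by $X_x$ together with those halves. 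So a minimal connected induced subgraph containing them is a subdivided claw or a triangle with three nontrivial legs; the path case is excluded.
\item Pieces for adjacent $x,y$ are joined only by the middle edge of $Q_e$, and pieces for non-adjacent branch vertices are anticomplete.
\end{itemize}
Only at this point are the types well defined. Your Ramsey step then needs two repairs. First, a sub-wall obtained by sampling every $c$-th brick need not be homogeneous for an arbitrary type pattern. The branch positions must be chosen according to the pattern, e.g.\ by a bipartite Ramsey argument on the rows and columns of large blocks of the wall. Second, your fallback of converting a claw centre into a triangle cannot work in general. The three neighbours of an induced claw centre are pairwise non-adjacent, so no triangle is available there.
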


Combining Propositions~\ref{prop:wall} and~\ref{prop:constellation} immediately yields the following approximate classification of $\calc_t$.

\begin{theorem}\label{thm:CtInTermsOfInducedSubgraphs}
For every positive integer $r$ there exists a positive integer $t$ such that if a graph $G$ contains $K_{t,t}$ or $W_{t\times t}$ as an induced minor, then $G$ contains as an induced subgraph a subdivision of the wall $W_{r\times r}$, or the line graph of a subdivision of $W_{r\times r}$, or $K_{r,r}$ or an $(r,r)$-constellation.
\end{theorem}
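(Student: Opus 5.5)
Given $r$, we choose $t$ large enough to feed both propositions at once, and then split according to which obstruction $G$ contains. Let $b=b(r)$ be the integer from Proposition~\ref{prop:wall} applied with parameter $r$. Let $a=a(r,r,r)$ be the integer from Proposition~\ref{prop:constellation} with $s=l=r$. We set $t=\max(a,b)$.

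Suppose $G$ contains $K_{t,t}$ or $W_{t\times t}$ as an induced minor. We first note that induced minors are transitive, and that $K_{a,a}$ is an induced minor of $K_{t,t}$ (delete vertices). Similarly, $W_{b\times b}$ is an induced minor of $W_{t\times t}$. For the latter, the plan is to delete all vertices outside a $b\times b$ subwall and then contract the leftover degree-two corner paths. A routine check shows that this yields exactly $W_{b\times b}$ as defined, up to the parity convention for the removed corner vertex $(x_t,t-1)$. If that parity convention causes trouble, we fall back on the observation that $W_{b\times b}$ is a subgraph of $W_{t\times t}$ obtained by taking consecutive rows and columns, so it is an induced subgraph up to contracting subdivided boundary edges.

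Case 1: $G$ contains $K_{t,t}$, hence $K_{a,a}$, as an induced minor. Proposition~\ref{prop:constellation} then gives one of two outcomes. Either $G$ has an induced $K_{r,r}$, an induced subdivision of $W_{r\times r}$, or the induced line graph of one, or $G$ contains an $(r,r)$-constellation. Every one of these is among the conclusions of the theorem.

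Case 2: $G$ contains $W_{t\times t}$, hence $W_{b\times b}$, as an induced minor. Proposition~\ref{prop:wall} gives an induced subdivision of $W_{r\times r}$ or the line graph of one, and we are done.

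The only step that needs real care is the monotonicity claim that $W_{b\times b}$ is an induced minor of $W_{t\times t}$ for $b\le t$. This is standard, and at worst it costs a constant-factor increase in $t$: we can take a subwall and contract it, and this step does not affect the rest of the argument. Everything else is a direct combination of the two propositions.
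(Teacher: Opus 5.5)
Your proposal is correct and follows essentially the same route as the paper: you take $t=\max(a(r,r,r),b(r))$ and split on which induced minor $G$ contains, then apply Propositions~\ref{prop:constellation} and~\ref{prop:wall} respectively. The monotonicity step you hedge on (which the paper leaves implicit) is simpler than you suggest. Under the paper's coordinate definition, for $b\le t$ the graph $W_{b\times b}$ is exactly the induced subgraph of $W_{t\times t}$ on the vertex set of $W_{b\times b}$, so no contractions are needed and $t$ does not have to be increased.
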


\begin{proof}
Given $r$ set $t = \max( a(r, r, r), b(r) )$ where $a$ is the function guaranteed by Proposition~\ref{prop:constellation}, while $b$ is the function guaranteed by Proposition~\ref{prop:wall}. 

Suppose now that $G$ contains $K_{t,t}$ or $W_{t\times t}$ as an induced minor. 
If $G$ contains $K_{t,t}$ as an induced minor then, by \cref{prop:constellation}, $G$ contains an induced subgraph isomorphic to $K_{r,r}$, a subdivision of $W_{r\times r}$ or the line graph of a subdivision of $W_{r\times r}$, or $G$ contains an $(r, r)$-constellation.
If $G$ contains $W_{t\times t}$ as an induced minor then, by \cref{prop:wall}, $G$ contains as an induced subgraph either a subdivision of $W_{r\times r}$, or the line graph of a subdivision of $W_{r\times r}$.
\end{proof}

Theorem~\ref{thm:CtInTermsOfInducedSubgraphs} is an approximate characterization of $\calc_t$ in the sense that a graph $G$ that contains one of the obstructions guaranteed by Theorem~\ref{thm:CtInTermsOfInducedSubgraphs} with parameter $r$ is not in $\calc_r$. On the other hand, any graph not in $\calc_t$ (where $t$ is the integer guaranteed by the theorem) contains such obstructions. Thus Theorem~\ref{thm:CtInTermsOfInducedSubgraphs} shows that for any $r$ there exists a $t$ such that every graph $G$ not in $\calc_t$ contains a ``nice'' obstruction witnessing that $G$ is not in $\calc_r$.

Now we obtain two more characterizations of $\calc_t$.
Recall that two vertices $u,v$ of a graph are called \defn{true-twins} if $N[u]=N[v]$.
Let $G$ be a graph. 
We define \defn{$\mathsf{twin}(G)$} to be the graph obtained from $G$ by adding a true-twin for each vertex of the graph. 
That is, for each $v\in V(G)$, we add a new vertex $v^\star$, where $v^\star$ is adjacent to every $u\in N_G[v]$ and two new vertices $u^\star$ and $v^\star$ are adjacent if and only if $uv\in E(G)$. 
For a vertex subset $S$, we use $S^\star$ to denote $\{v^\star~:~v\in S\}$.
We remark that none of $K_{t,t}$, a subdivision of $W_{t\times t}$, the line graph of a subdivision of $W_{t\times t}$, or a $(t,t)$-constellation contains a pair of vertices that are true-twins.

%
A \defn{\skt{t}} in $G$ is a family $\{A_1,\dots,A_t\}\cup\{B_1,\dots,B_t\}$ of vertex subsets of $G$ such that:
\begin{itemize}
    \item[--] $A_i$ and $B_i$ induce connected subgraphs for every $i\in[t]$;
    \item[--] $A_i$ and $A_j$ do not touch, and $B_i$ and $B_j$ do not touch, for every distinct $i,j\in[t]$;
    \item[--] $A_i$ and $B_j$ touch for every $i,j\in[t]$.
\end{itemize}
We say that a graph \defn{has a \skt{t}} if it has a collection of vertex subsets which forms an \skt{t}. 
Now we are ready to state and prove the first main lemma of this section.

\begin{lemma}\label{lem:squigglyktt}
    Corresponding to every positive integer $t$, there exists a positive integer $\tilde{t} = \tilde{t}(t)$ such that no graph $G\in\calc_t$ has a \skt{\tilde{t}}.
\end{lemma}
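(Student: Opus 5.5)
We will pass to the graph $\mathsf{twin}(G)$, where the overlaps between the sets $A_i$ and $B_j$ can be removed, so that a \skt{\tilde t} becomes a genuine $K_{\tilde t,\tilde t}$ induced minor. We will then pull the resulting obstruction back to $G$ using \cref{thm:CtInTermsOfInducedSubgraphs} and the fact that none of its obstructions has true twins.

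Concretely, fix $t$. Let $r$ be large enough in terms of $t$ (say $r=2t$) that each of the following graphs contains $K_{t,t}$ or $W_{t\times t}$ as an induced minor: $K_{r,r}$, an $(r,r)$-constellation, a subdivision of $W_{r\times r}$, and the line graph of a subdivision of $W_{r\times r}$. Then let $\tilde t$ be the integer that \cref{thm:CtInTermsOfInducedSubgraphs} provides for this $r$. Suppose $G\in\calc_t$ has a \skt{\tilde t} $\{A_i\}\cup\{B_j\}$.

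\textbf{Step 1: a biclique induced minor in $\mathsf{twin}(G)$.} In $\mathsf{twin}(G)$ consider the sets $A_1,\dots,A_{\tilde t}$ and $B_1^\star,\dots,B_{\tilde t}^\star$. These are pairwise disjoint, since the $A_i$ are disjoint from each other (they do not touch) and the $B_j^\star$ consist of new vertices.
\begin{itemize}
\item Each $B_j^\star$ induces a copy of $G[B_j]$, so it is connected.
\item $B_j^\star$ and $B_k^\star$ are anticomplete, because $B_j$ and $B_k$ do not touch in $G$; likewise $A_i$ and $A_k$ are anticomplete.
\item $A_i$ and $B_j^\star$ are adjacent. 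Since $A_i$ touches $B_j$, there are $v\in A_i$ and $u\in B_j$ with $u\in N_G[v]$, and then $u^\star v$ is an edge of $\mathsf{twin}(G)$.
\end{itemize}
Contracting each of these $2\tilde t$ sets to a single vertex and deleting all other vertices therefore yields $K_{\tilde t,\tilde t}$ as an induced minor of $\mathsf{twin}(G)$.

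\textbf{Step 2: an obstruction in $\mathsf{twin}(G)$, moved back to $G$.} By \cref{thm:CtInTermsOfInducedSubgraphs}, $\mathsf{twin}(G)$ contains an induced subgraph $H$ isomorphic to one of $K_{r,r}$, a subdivided $W_{r\times r}$, the line graph of a subdivided $W_{r\times r}$, or an $(r,r)$-constellation. None of these has a pair of true twins, and $v$, $v^\star$ are true twins in $\mathsf{twin}(G)$, so $H$ contains at most one vertex from each pair $\{v,v^\star\}$.

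Replace every vertex $v^\star\in V(H)$ by $v$. For distinct $u,w\in V(G)$, each of the pairs $u^\star w$, $uw^\star$, $u^\star w^\star$ is an edge exactly when $uw\in E(G)$. Hence this replacement is an isomorphism onto an induced subgraph of $G$, and $G$ contains the same obstruction as an induced subgraph.

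\textbf{Step 3: contradiction.} By the choice of $r$, the obstruction found in Step 2 gives $G$ a $K_{t,t}$ or $W_{t\times t}$ induced minor, contradicting $G\in\calc_t$. The first two cases are routine:
\begin{itemize}
\item $K_{r,r}$ is itself such a minor.
\item An $(r,r)$-constellation yields $K_{r,r}$ by contracting each of its paths to a vertex, as remarked after its definition.
\item A subdivision of $W_{r\times r}$ contracts to $W_{r\times r}$.
\end{itemize}
The one point I expect to need care is the line graph of a subdivided wall. I would handle it by contracting, for each branch vertex of degree $3$, the triangle of edges at that vertex together with an adjacent half of each incident subdivided path. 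Done carefully, this leaves an induced copy of a (possibly further subdivided) wall, from which $W_{t\times t}$ follows once $r$ is somewhat larger than $t$. This is a standard fact (it also underlies \cref{prop:wall}), so I would only sketch it, taking $r$ large enough to absorb any constant loss.
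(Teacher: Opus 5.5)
Your proof is the paper's argument. You pass to $\mathsf{twin}(G)$, observe that $\{A_i\}\cup\{B_j^\star\}$ is an induced-minor model of $K_{\tilde t,\tilde t}$ there, extract an induced obstruction, and pull it back to $G$ via $v^\star\mapsto v$. The only difference is bookkeeping. The paper sets $\tilde t:=a(t,t,t)$ and applies \cref{prop:constellation} directly with parameter $t$. You instead route through \cref{thm:CtInTermsOfInducedSubgraphs} with a larger $r$. That is harmless, and it makes explicit the final step (each obstruction yields a $K_{t,t}$ or $W_{t\times t}$ induced minor), which the paper only asserts.

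One sub-claim in your Step~2 is false as stated, and the paper's remark before the lemma makes the same claim. An $(r,r)$-constellation can contain true twins: a path component consisting of two adjacent vertices with the same neighbours in $S_C$. The pair $\{v,v^\star\}$ can form exactly such a component inside $\mathsf{twin}(G)$. The repair is short, and for $r\ge 2$ this is the only way both $v$ and $v^\star$ can lie in a constellation $H$:
\begin{itemize}
\item They are adjacent, so they are not both in $S_C$.
\item If one were in $S_C$, it would have a neighbour in a path component other than the one containing the other vertex, and the other cannot be adjacent to that neighbour.
\item If they lay on a longer induced path, a further path-neighbour of one would have to be adjacent to the other, creating a triangle.
\end{itemize}
Deleting $v^\star$ from each such two-vertex component leaves an $(r,r)$-constellation, since the path $\{v\}$ still contains a neighbour of every vertex of $S_C$. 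After that, your replacement map works verbatim.

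Your sketch for the line graph of a subdivided wall breaks when a wall edge is not subdivided, because the triangles at its two ends share a vertex. The clean way around this is to note that $L(H')$ is an induced subgraph of $L(H)$ for every subgraph $H'\subseteq H$. So you may first pass to a subdivision of $W_{t\times t}$ inside the subdivided $W_{r\times r}$ in which every edge is subdivided at least once, and only then contract the triangles.
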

\begin{proof}
    Set $\tilde{t} := a(t,t,t)$, where $a$ is as in \Cref{prop:constellation}.
    Suppose for contradiction that $G\in\calc_t$ has a \skt{\tilde{t}}, witnessed by $\{A_1,\dots,A_{\tilde{t}}\}\cup\{B_1,\dots,B_{\tilde{t}}\}$.
    Let $G' := \mathsf{twin}(G)$ and observe that by construction each $G'-V(G)$ is isomorphic to $G$. 
    So the sets $B_1^\star,\dots,B_{\tilde{t}}^\star$ are connected and pairwise non-touching.
    Furthermore, since each $A_i$ touches each $B_j$, we have that $N(A_i)\cap B_j^\star$ is non-empty for every $i,j\in[\tilde{t}]$.
    Consequently $\{A_1,\dots,A_{\tilde{t}}\}\cup\{B_1^\star,\dots,B_{\tilde{t}}^\star\}$ is the induced minor model of a $K_{\tilde{t},\tilde{t}}$ in $G'$.
    
    By \Cref{prop:constellation}, $G'$ contains an induced subgraph $H$ isomorphic to one of: $K_{t,t}$, a subdivision of $W_{t\times t}$, the line graph of a subdivision of $W_{t\times t}$, or a $(t,t)$-constellation.
    In all cases, no two vertices of $H$ are true-twins in $G'$, and so $H$ is isomorphic to an induced subgraph of $G$. 
    But then, in all cases we conclude that $G$ contains the induced-minor $W_{t \times t}$-model, or an induced $K_{t,t}$-model, contrary to the fact that  $G \in \mathcal{C}_t$.
\end{proof}

\begin{remark}
    For a positive integer $t$, we use $\tsquig(t)$ to denote the constant implied by \Cref{lem:squigglyktt} and when $t$ is clear from context, we simply write $\tsquig$.
\end{remark}

For the final characterization, let $p,q$ be positive integers and let $G$ be a graph.
Two vertex sets $A,B\subseteq V(G)$ are \defn{$q$-wide} in $G$ if $G$ contains a set $\mathcal{P}$ of at least $q$ many $A$--$B$ paths whose interiors are pairwise vertex-disjoint and anticomplete.
Otherwise they are called \defn{ $q$-slim} in $G$.
A graph $G$ is \defn{$(p,q)$-slim} if for every independent set $S$ of cardinality $p$, there exist vertices $u,v\in S$ such that the pair $\{u\},\{v\}$ is $q$-slim in $G$. 
We say that a hereditary graph class is \defn{$(p,q)$-slim}, if every graph in that class is.
We need the following theorem by Chudnovsky, Codsi, Fischer, and Lokshtanov~\cite{subpolytw}.

\begin{proposition} \label{prop:slim}
    For every positive integer $t$, there exist positive integers $p = p(t)$ and $q = q(t)$ such that $\calc_t$ is $(p,q)$-slim.
\end{proposition}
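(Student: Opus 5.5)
Since this statement is quoted from~\cite{subpolytw}, the plan below is my reconstruction rather than that paper's argument, and several steps are uncertain. I would argue by contraposition. Fix $t$ and assume that, for large $p$ and $q$ to be chosen later, a graph $G$ has an independent set $S$ with $|S|=p$ in which every pair $\{u\},\{v\}$ with $u,v\in S$ is $q$-wide. The goal is to show that $G$ then contains $K_{t,t}$ or $W_{t\times t}$ as an induced minor. For each pair $\{u,v\}\subseteq S$, fix a family $\mathcal P_{uv}$ of $q$ paths from $u$ to $v$ whose interiors are pairwise disjoint and anticomplete. Contracting each interior to a single vertex shows that every single pair already yields an induced $K_{2,q}$-minor. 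The task is to combine many such local structures coherently, and I would use \Cref{lem:squigglyktt} as the target: it suffices to build a \skt{\tilde t} with $\tilde t=\tsquig(t)$.

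The first step is to choose $p$ huge and apply Ramsey's theorem to $S$. I would colour each pair or triple of $S$ according to how the path systems interact: whether the interiors in $\mathcal P_{uv}$ and $\mathcal P_{uw}$ are mostly anticomplete to each other, and how a path of $\mathcal P_{uv}$ meets $N[w]$ for a third vertex $w$. This yields a large homogeneous subset $S'$. Within each $\mathcal P_{uv}$ I would also discard paths, keeping $q'\ll q$ of them, so that every kept path avoids any fixed bounded set of other vertices of $S'$. This is possible because the interiors are anticomplete, so a single vertex $w$ lies in, or is adjacent to, boundedly interacting paths only in controlled ways. Doing this may need a second, iterated Ramsey argument on the paths themselves.

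The proof then splits into two cases according to the homogeneous outcome.
\begin{enumerate}
\item In the \emph{sparse case}, paths belonging to different pairs are anticomplete apart from their shared ends. Then $G$ contains an induced subdivision of a structure in which $p'$ branch vertices are joined pairwise by $q'$ parallel anticomplete paths. I would extract a wall from this: route the rows of a $t\times t$ wall along paths $u_1u_2,u_2u_3,\dots$ and use the parallel copies as the rungs. An alternative is to take $A_i=\{u_i\}$ and, for each $j$, let $B_j$ be a connected union of path interiors linking the $u_i$. That $B_j$ is a hub meeting all $A_i$, and with enough parallel copies the different $B_j$ can be chosen pairwise non-touching.
\item In the \emph{dense case}, many paths of different pairs touch one another. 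Here I would let the $A_i$ be the interiors of disjoint paths from one family and the $B_j$ the interiors from another family. Homogeneity then gives that every $A_i$ touches every $B_j$, while paths within the same family are anticomplete by definition. This is exactly a \skt{\tilde t}, contradicting \Cref{lem:squigglyktt}.
\end{enumerate}

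I expect the main obstacle to be the bookkeeping of the interactions between different path systems: making the Ramsey colouring fine enough that every homogeneous outcome yields one of the two structures, while keeping $p$ and $q$ bounded by functions of $t$ alone. The sparse case is particularly delicate, because the wall or biclique model must be \emph{induced}. Stray edges from a branch vertex $u_k$ to paths not incident with $u_k$ must be excluded, and that requires the cleaning step above to be carried out carefully.
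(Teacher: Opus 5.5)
\textbf{Genuine gap in the cleaning step and the case split.} Since $\mathcal P_{uv}$ has pairwise disjoint interiors, a third vertex $w\in S$ lies in at most one of them, so membership can indeed be cleaned away. Adjacency cannot. The interiors being pairwise anticomplete puts no restriction on a common outside neighbour, and $w$ may have a neighbour in \emph{every} interior of $\mathcal P_{uv}$. Your claim that kept paths can be made to avoid $N[w]$ is therefore false. The homogeneous outcome ``$w$ is adjacent to all kept paths of $\mathcal P_{uv}$'' also falls outside both of your cases: the sparse case needs it to be absent, and the dense case only concerns path--path contacts.

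This configuration needs its own case, which is easy. First discard the one path containing $w$, if any. Then shrink each family by pigeonhole, once for each $w\notin\{u,v\}$, so that $w$ is adjacent to all or to none of the kept interiors. Colour each ordered triple $u<v<w$ of $S$ by three bits, one for each vertex of the triple, recording ``all'' or ``none'' with respect to the family of the other two. Take a homogeneous set of size $2t+2$ by Ramsey's theorem for triples. If some bit is ``all'', you get $w_1,\dots,w_t$ and a single pair $u,v$ such that each $w_i$ is adjacent to each of $t$ kept interiors of $\mathcal P_{uv}$. Then $\{w_1\},\dots,\{w_t\}$ against those interiors is an induced $K_{t,t}$-minor model outright.

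\textbf{Two further repairs.} First, the dense case needs homogeneity between individual paths. A colour such as ``mostly anticomplete'' on pairs or triples of $S$ does not give that every $A_i$ touches every $B_j$. Instead, apply bipartite Ramsey to each pair of families in turn. Homogeneity is inherited by subfamilies, so this iterates over all pairs of the $\binom p2$ families; it only makes $q$ a tower in $t$. Each pair of families then becomes either completely touching or completely non-touching. The completely touching outcome is a \skt{\tilde t}, which contradicts \Cref{lem:squigglyktt}. This holds even if interiors from the two families intersect, because the definition does not require the two sides to be disjoint.

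Second, the wall routing in the sparse case fails: all parallel $u_i$--$u_{i+1}$ paths pass through $u_i$ and $u_{i+1}$, so they cannot serve as rungs between different rows. Your alternative $B_j$, a union of interiors, is disconnected there unless it contains a branch vertex. One path per pair already suffices. Take $u_1,\dots,u_t,w_1,\dots,w_t$ from the homogeneous set, and set $A_i=\{u_i\}$ and $B_j=\{w_j\}\cup\bigcup_i \mathring P_{w_ju_i}$. The ``none'' and ``non-touching'' outcomes give exactly the disjointness and non-touching conditions of an induced $K_{t,t}$-minor model.

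\Cref{lem:squigglyktt} rests only on \Cref{prop:constellation}, not on this proposition, so invoking it is not circular. With these repairs your outline becomes a complete argument. The paper itself gives no proof here; it imports the statement from \cite{subpolytw}, so there is no in-paper argument to compare against.
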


$G$ is said to be \defn{strongly-$(p,q)$-slim} if for every set $\{S_1,S_2,\dots,S_p\}$ of pairwise disjoint and anticomplete sets of cardinality $p$ with the property that $G[S_i]$ is connected for each $i\in[p]$, there exists at least one pair $S_i,S_{j}$ that is $q$-slim in $G$. 
Finally, we say that a hereditary graph class is \defn{strongly-$(p,q)$-slim}, if every graph in that class is.
We now state and prove the second main lemma of this section.

\begin{lemma}\label{lem:pqslim}
    For every positive integer $t\ge 2$, there exist positive integers $p = p(t)$ and $q = q(t)$ such that $\calc_t$ is strongly-$(p,q)$-slim.
\end{lemma}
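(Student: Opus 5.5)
The plan is to reduce \Cref{lem:pqslim} to the ordinary slimness of \Cref{prop:slim} by contracting each connected set $S_i$ to a single vertex. Let $p_0=p(t)$ and $q_0=q(t)$ be the constants of \Cref{prop:slim}. Suppose $S_1,\dots,S_P$ are pairwise disjoint, pairwise anticomplete sets, each inducing a connected subgraph, and suppose every pair $S_i,S_j$ is $Q$-wide in $G$. The constants $P$ and $Q$ will be chosen later as functions of $p_0$ and $q_0$.

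Contract each $G[S_i]$ to a single vertex $s_i$ and call the result $H$. Then $H$ is an induced minor of $G$, so $H\in\calc_t$. Since the $S_i$ are disjoint and anticomplete, $\{s_1,\dots,s_P\}$ is an independent set of $H$. Let $D$ be the graph on $[P]$ in which $ij$ is an edge when $\{s_i\},\{s_j\}$ is $q_0$-wide in $H$. \Cref{prop:slim} says that every $p_0$-subset of $\{s_1,\dots,s_P\}$ contains a $q_0$-slim pair, i.e.\ $D$ has no clique of size $p_0$. It therefore suffices to show that the hypothesis ``all pairs are $Q$-wide in $G$'' forces many edges in $D$, enough to produce a $K_{p_0}$ in $D$.

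The transfer of wideness from $G$ to $H$ is done by truncation. Take a family of $Q$ paths from $S_i$ to $S_j$ whose interiors are pairwise disjoint and anticomplete. Cut each path at its last vertex in $S_i$ and then at the first later vertex lying in $\bigcup_{k\ne i}S_k$. Each resulting subpath runs from $S_i$ to some $S_k$, and its interior avoids every $S_\ell$. Such interiors are untouched by the contraction. Subpaths of the original paths still have pairwise disjoint and anticomplete interiors, and adjacency to a contracted vertex $s_\ell$ is harmless, because the anticompleteness condition only concerns interiors. By pigeonhole, taking $Q\ge (P-1)q_0$ guarantees that at least $q_0$ truncated paths end in the same $S_k$, so $ik\in E(D)$. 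If we truncate from the $S_j$ end instead, we get an edge $jk'$.

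This already shows that $D$ has no isolated vertices, and the remaining work is to upgrade it to a $K_{p_0}$ in $D$. This is the step I expect to be the main obstacle. My first attempt is an iterative or Ramsey-type argument. Choose $P$ huge. If a pair $(i,j)$ has at least $q_0$ clean paths (interiors avoiding all other $S_k$), then $ij\in E(D)$ directly. Otherwise almost all of its $Q$ paths pass through other sets, and each such path decomposes at its visits into consecutive clean segments between sets. I would colour the pairs $\{i,j\}$ according to whether they are clean-wide. Using Ramsey's theorem, I would pass to a large subfamily where either all pairs are clean-wide, which gives a $K_{p_0}$ in $D$ and hence a contradiction, or no pair is. In the second case I would re-contract only the sets of the subfamily, so that sets outside it become ordinary vertices that paths may pass through. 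I would then argue, using the segment decomposition and a pigeonhole over the bounded number of sets, that some pair inside the subfamily must in fact be clean-wide. Iterating this with $Q$ growing by a factor depending on $P$ at each round should terminate with suitable $p=P$ and $q=Q$.
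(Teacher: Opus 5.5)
Your contraction step and the truncation/pigeonhole argument are correct, but they only show that $D$ has no isolated vertices. The upgrade to a $K_{p_0}$, which you flag yourself, is a genuine gap. The combinatorial implication your Ramsey/re-contraction scheme is aiming for is false. That implication is: all pairs $Q$-wide in $G$ $\Rightarrow$ some subfamily whose contraction has $p_0$ pairwise $q_0$-wide contracted vertices.

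Here is a configuration where it fails. Let each $S_k$ ($k\in[P]$) be an induced path containing pairwise non-adjacent vertices $x_k^1,\dots,x_k^Q$, and join $x_k^m$ to $x_{k+1}^m$ by a path $R_k^m$ with at least two internal vertices.
\begin{itemize}
\item The $S_k$ are connected, disjoint and pairwise anticomplete.
\item Each pair $S_i,S_j$ with $i<j$ is $Q$-wide via the rows $x_i^mR_i^mx_{i+1}^m\cdots R_{j-1}^mx_j^m$.
\item $S_k$ separates $S_i$ from $S_j$ whenever $i<k<j$. So once a subfamily containing $i<k<j$ is contracted, every $s_i$--$s_j$ path has $s_k$ in its interior, and $\{s_i\},\{s_j\}$ is $2$-slim. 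Note that $q_0\ge 2$, since long paths lie in $\mathcal{C}_t$.
\end{itemize}
Thus, for every subfamily, the wide pairs among the contracted vertices form a path. No number of rounds with growing $Q$ produces a $K_{p_0}$ when $p_0\ge 3$. Such configurations are excluded from $\mathcal{C}_t$ because they contain a large wall as an induced minor. An argument that only ever applies \Cref{prop:slim} to the contracted vertices of contractions of $G$ never sees this.

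The idea you are missing is the paper's twin trick. It lets the witnessing paths pass through the other sets without being merged by the contraction.
\begin{enumerate}
\item Pass to $G'=\mathsf{twin}(G)$ and contract each original $S_i$ to $s_i$.
\item In each witnessing path, replace every interior vertex $v$ lying in some $S_k$ by its true twin $v^\star$. The new paths still have pairwise disjoint, anticomplete interiors, and these now avoid all contracted vertices. So $\{s_1,\dots,s_p\}$ is an independent set all of whose pairs are $q$-wide, with no loss in $q$ and no Ramsey blow-up.
\item Since $\mathsf{twin}(G)$ need not lie in $\mathcal{C}_t$, apply \Cref{prop:slim} for the larger class $\mathcal{C}_{\max\{a,b\}}$, with $a=a(t,t,t)$ and $b=b(t)$. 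This gives a $K_{a,a}$ or $W_{b\times b}$ induced minor in $\mathsf{twin}(G)$.
\item \Cref{prop:constellation} and \Cref{prop:wall} then give, in $\mathsf{twin}(G)$, an induced $K_{t,t}$, subdivided wall, line graph of a subdivided wall, or $(t,t)$-constellation.
\item None of these has true twins; this is where $t\ge 2$ enters. So it is isomorphic to an induced subgraph of $G$, contradicting $G\in\mathcal{C}_t$.
\end{enumerate}
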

\begin{proof}
    Let \(a:=a(t,t,t)\) be given by~\cref{prop:constellation} and let \(b:=b(t)\) be given by~\cref{prop:wall}. 
    Apply Proposition~\ref{prop:slim} with parameter \(\max\{a,b\}\), and let \(p,q\) be the resulting integers.
    Let $\{S_1,\dots,S_p\}$ be pairwise disjoint, anticomplete, connected sets in $G\in\calc_t$.
    Let $G' := \mathsf{twin}(G)$ and define $G''$ to be the graph obtained from $G'$ by contracting each $S_i\subseteq V(G')$ to a single vertex $s_i$.
    Since the $S_i$ are pairwise disjoint and anticomplete in $G$, $\hat{S} = \{s_1,\dots,s_p\}$ is an independent set of size $p$ in $G''$.
    Now, suppose for contradiction that no pair $S_i,S_j$ is $q$-slim in $G$.

    \begin{claim}
        If $s_i,s_j\in \hat{S}$ then the pair $\{s_i\}, \{s_j\}$ is $q$-wide in $G''$.
    \end{claim}
    \begin{claimproof}
        Let $s_i,s_j\in \hat{S}$. 
        Since $S_i,S_j$ are $q$-wide in $G$, there exists a set $\mathcal{P}$ of $q$ many $S_i$--$S_j$ paths in $G$ that are internally vertex-disjoint and internally anticomplete.
        Assume without loss of generality that the internal vertices of every path in $P$ lie outside $S_i\cup S_j$.
        For every $P\in\mathcal{P}$, define $P'$ to be the subgraph of $G'$ obtained by replacing every internal vertex $v\in V(P)\cap\bigcup_{s\in[p]} S_s$ with $v^\star$, and observe that $P'$ is an $S_i$--$S_j$ path in $G'$. 
        Since the paths in $\mathcal{P}$ are internally vertex-disjoint and internally anticomplete, $\mathcal{P}' := \{P'~:~P\in\mathcal{P}\}$ is a set of $S_i$--$S_j$ paths in $G'$ with no internal vertices from $\bigcup_{i\in[p]}S_i$, that also have the same properties.
        For every $P'\in\mathcal{P}'$, let $P''$ be obtained from $P'$ by replacing its endpoints with $s_i$ and $s_j$. 
        Since $P'$ has no internal vertex in $\bigcup_{s\in[p]}S_s$, each $P''$ is an $s_i$--$s_j$ path in $G''$. 
        Hence $\mathcal{P}'' := \{P''~:~P'\in\mathcal{P}'\}$ is internally vertex-disjoint and internally anticomplete, witnessing that $\{s_i\},\{s_j\}$ is $q$-wide in $G''$.
    \end{claimproof}
    
    By the contrapositive of \Cref{prop:slim}, $G''$ must contain a $K_{a,a}$ or $W_{b\times b}$ as an induced minor, witnessed by some induced subgraph $H''\subseteq G''$.
    But then, the induced subgraph $H'$ of $G'$ obtained by replacing any $s_i\in V(H'')\cap \hat{S}$ with $S_i\subseteq V(G')$ witnesses a $K_{a,a}$ or $W_{b\times b}$ as an induced minor of $G'$.
    Hence, from \Cref{prop:constellation} and \Cref{prop:wall} we conclude that, $G'$ contains an induced subgraph $H$ isomorphic to one of: $K_{t,t}$, a subdivision of $W_{t\times t}$, the line graph of a subdivision of $W_{t\times t}$, or a $(t,t)$-constellation.
    In all cases, no two vertices of $H$ are true-twins in $G'$, and so $H$ is isomorphic to an induced subgraph of $G$. 
    But then, in all cases, we obtain an induced-minor model of $W_{t \times t}$ or $K_{t,t}$ in $G$, contradicting the fact that $G\in\mathcal{C}_t$. 
\end{proof}

\begin{remark}
    For a positive integer $t$, we use $\pslim(t)$ and $\qslim(t)$ to denote the constant implied by \Cref{lem:pqslim}. When $t$ is clear from context, we simply write $\pslim, \qslim$.
\end{remark}

\section{Layered Sets with Balls of Radius $r$.}\label{sec:layer}

In this section, we prove a bounded-radius version of the layered-sets theorem from \cite{chudnovsky2025treeV}. Informally, given a hereditary property \(\mathcal P\), we show that if every induced subgraph can be made to satisfy \(\mathcal P\) by deleting a set coverable by few bounded-radius balls, then either the whole graph can be made to satisfy \(\mathcal P\) by deleting a bounded set, or we can find many pairwise anticomplete ``small'' sets whose deletion makes the remaining graph satisfy \(\mathcal P\).
The proof iteratively builds a sequence of easily coverable sets whose deletion gives \(\mathcal P\). At each step, an exceptional set is removed so that the level sets, which record how often vertices are covered by previously chosen sets, shrink geometrically. This step crucially uses a recent result from \cite{chudnovsky2026coarseLayeredSets} to ensure that the process is not too costly. We then extract a pairwise anticomplete subsequence.
Finally, we apply the general lemma to two hereditary properties: being balanced with respect to a weight function, and being \(A\)--\(B\) separated. These applications use a good tree decomposition of graphs in \(\mathcal C_t\) from \cite{coarsetw}.

For a sequence $\vec{S} = S_1,\dots, S_k$, we denote by $\vec{S}_{\leq j}$ the subsequence $S_1,\dots,S_j$. We define $S_{\leq 0 }=\mt$.
Let $\mathcal{P}$ be a hereditary graph property. For a graph $G$ and $X\subseteq G$ say that $G$ is \defn{$X$-close} to $\prop$ if $G\sm X \in \prop$. We say that a graph is \defn{$(k,r,\prop)$-close} if there exists a $(k,r)$-coverable set $X$ such that $G$ is $X$-close to $\prop$. We say that a graph family $\mathcal{F}$ is \defn{$(k,r,\prop)$-close} if it is hereditary and every graph in $\mathcal{F}$ is $(k,r,\prop)$-close. 
As a slight abuse of notation, we extend the notion of hereditary properties to labeled graphs.

Given a graph $G$ and a sequence $\vec{S} = (S_1, \ldots, S_\ell)$ of vertex sets in $G$ and integer $i$, we define the \defn{$i$th level set} in $G$ to be $$L_i(G, \vec{S}) = \set{v \in V(G) ~:~ \abs{\set{S_j \in \vec{S} ~:~ v \in S_j}} \geq i }$$
(note that $L_0(G,S)=V(G)$).
For a graph $G$, tuple $\vec{S} = (S_1, \ldots, S_\ell)$ of sets, and positive integer $a$ and $r$, we say that a vertex set $C \subseteq V(G)$ is \defn{$(a,r)$-clean} if for every $i\geq 1$ and vertex $v \in C$ it holds that 
$$\abs{L_i(N^r_C[v],\vec{S})}\quad  \leq\quad \frac{|G|}{2^i}\frac{1}{a}\mbox{.}$$

A \defn{$(k,r,a)$-cleaning sequence} in a graph $G$ consists of three sequences of sets $(S_1, \ldots, S_\ell)$, $(\hat{S}_1, \ldots, \hat{S}_\ell)$, and $(G_1, \ldots, G_\ell)$ such that :
\begin{enumerate}
    \item For every $1\le i\le j\le \ell$, $G_j\subseteq G_i\subseteq G$.
    \item For every \(1\le i\le \ell\), \(\hat S_i\) is a \((k,r)\)-cover of \(S_i\) in \(G_i\). Moreover, for every \(1\le i<j\le \ell\), we have \(\hat S_i\cap V(G_j)=\emptyset\).
    \item For every $1\leq i \leq \ell$ the set $G_i$ is $(2ka,r)$-clean with respect to $\vec{S}_{\leq i-1 }$ 
    \item $G_1$ is $(ka,r)$-clean with respect to $G$  (viewed as a one-element sequence)
    \item $\ell \leq a$,
\end{enumerate}
We call $\ell$ the \defn{length} of the $(k,r,a)$-cleaning sequence.

\begin{lemma}[Closure under subsequences]

Let $(S_1, \ldots, S_\ell)$, $(\hat{S}_1, \ldots, \hat{S}_\ell)$, and $(G_1, \ldots, G_\ell)$ be a $(k,r,a)$-cleaning sequence in a graph $G$.
Let $1 \le i_1 < i_2 < \cdots < i_p \le \ell$ be an increasing sequence of indices.
Let $
\vec{S}' =(S_{i_1},\dots,S_{i_p}),
\vec{\hat S}' = (\hat S_{i_1},\dots,\hat S_{i_p}),
\vec{G}' = (G_{i_1},\dots,G_{i_p})
$ then $\vec{S}'$, $\vec{\hat S}'$ $\vec{G}'$ is a $(k,r,a)$-cleaning sequence.
\end{lemma}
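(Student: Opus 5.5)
The plan is to check the five defining conditions of a $(k,r,a)$-cleaning sequence for the subsequence one at a time. Two monotonicity facts do all the work. First, level sets are monotone under taking subsequences of $\vec S$. Second, balls shrink when we pass to an induced subgraph. Write $S'_m=S_{i_m}$, $\hat S'_m=\hat S_{i_m}$ and $G'_m=G_{i_m}$ for $1\le m\le p$.

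\textbf{Conditions 1, 2 and 5.} These are inherited directly. For $m\le m'$ we have $i_m\le i_{m'}$, so $G'_{m'}=G_{i_{m'}}\subseteq G_{i_m}=G'_m\subseteq G$. The set $\hat S'_m=\hat S_{i_m}$ is still a $(k,r)$-cover of $S'_m$ in $G'_m=G_{i_m}$, since both the set and the graph are unchanged. For $m<m'$ we have $i_m<i_{m'}$, so $\hat S'_m\cap V(G'_{m'})=\hat S_{i_m}\cap V(G_{i_{m'}})=\emptyset$. Finally, the length is $p\le \ell\le a$.

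\textbf{Condition 3.} This is the only step needing an observation, and I regard it as the main (mild) obstacle. We must show that $G'_m=G_{i_m}$ is $(2ka,r)$-clean with respect to $\vec S'_{\le m-1}=(S_{i_1},\dots,S_{i_{m-1}})$. Every set in this prefix is among $S_1,\dots,S_{i_m-1}$, because $i_{m-1}<i_m$. Hence for every vertex $u$, the number of sets of $\vec S'_{\le m-1}$ containing $u$ is at most the number of sets of $\vec S_{\le i_m-1}$ containing $u$. It follows that for every $X\subseteq V(G)$ and every $i\ge1$,
\[
L_i(X,\vec S'_{\le m-1})\subseteq L_i(X,\vec S_{\le i_m-1}).
\]
We apply this with $X=N^r_{G_{i_m}}[v]$ for $v\in G_{i_m}$. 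This is the same ball that appears in the original condition 3 for index $i_m$, since the vertex set $C=G_{i_m}$ is unchanged. The original condition then gives
\[
\bigl|L_i(N^r_{G_{i_m}}[v],\vec S'_{\le m-1})\bigr|\le \frac{|G|}{2^i}\cdot\frac{1}{2ka}
\]
for all $i\ge 1$, as required.

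\textbf{Condition 4.} We need $G'_1=G_{i_1}$ to be $(ka,r)$-clean with respect to the one-element sequence $(V(G))$. For this sequence, $L_1(X)=X$ and $L_i(X)=\emptyset$ for $i\ge2$. So the condition reduces to
\[
|N^r_{G_{i_1}}[v]|\le \frac{|G|}{2ka}\quad\text{for every } v\in G_{i_1}.
\]
Since $G_{i_1}\subseteq G_1$ is an induced subgraph, distances in $G_{i_1}$ are at least those in $G_1$. Therefore $N^r_{G_{i_1}}[v]\subseteq N^r_{G_1}[v]$. The right-hand side has size at most $|G|/(2ka)$ by condition 4 for the original sequence, which completes the check.
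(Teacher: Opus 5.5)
Your proof is correct and follows the paper's argument. For Condition~3 you use that $\vec S'_{\le m-1}$ is a subsequence of $\vec S_{\le i_m-1}$, so the level sets of the same ball $N^r_{G_{i_m}}[v]$ can only shrink; for Condition~4 you use that balls shrink when passing to $G_{i_1}\subseteq G_1$; the remaining conditions are inherited directly. Your explicit reduction of Condition~4 to $|N^r_{G_{i_1}}[v]|\le |G|/(2ka)$ simply spells out what the paper calls monotonicity under taking subgraphs.
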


\begin{proof}
Condition 3 holds since for every $t \le p$, $b\geq 1$, and every $v \in G_{i_t}$ we have
$\vec S'_{\le t-1} \subseteq \vec S_{\le i_t - 1}$, and therefore
\[
L_b\!\left(N_{G_{i_t}}^{r}[v], \vec S'_{\le t-1}\right)
\quad \subseteq\quad
L_b\!\left(N_{G_{i_t}}^{r}[v], \vec S_{\le i_t - 1}\right).
\]
Hence,
\[
\left|L_b\left(N_{G_{i_t}}^{r}[v], \vec S'_{\le t-1}\right)\right| \quad
\le \quad
\left|L_b\left(N_{G_{i_t}}^{r}[v], \vec S_{\le i_t - 1}\right)\right| \quad
\le \quad 
\frac{|G|}{2^b (2ka)}.
\]

Condition 4 holds since the property is monotone under taking subgraphs, so $G_{i_1}$ is $(ka,r)$-clean with respect to $G$.
All remaining conditions are inherited directly from the original sequence, so
$\vec{S}', \vec{\hat S}', \vec{G}'$ forms a $(k,r,a)$-cleaning sequence.
\end{proof}





\begin{lemma}\label{lem:smallLevels}
Let $G$ be a graph and let $\vec S$, $\vec{\hat S}$, and $\vec G$ be a $(k,r,a)$-cleaning sequence in $G$ of length $\ell$. Then, for all $1\leq i,j\leq \ell$,
\[
\abs{L_i(G_j,\vec S_{\leq j})}
\quad \leq\quad
\frac{|G|}{2^i}\frac{j}{a}.
\]
\end{lemma}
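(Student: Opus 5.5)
We argue by induction on $j$, for all $i\ge 1$ simultaneously. The idea is that a vertex gains one unit of coverage only by lying in the newest set $S_j$. Inside $G_j$, that set is covered by at most $k$ balls of radius $r$, and the cleanness conditions bound how many vertices of a high level set each ball can contain.

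\emph{Base case.} For $j=0$ (or directly $j=1$), $L_i(G_j,\vec S_{\le 0})=\emptyset$ for every $i\ge 1$, so the bound holds trivially.

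\emph{Splitting the level set.} Fix $j\ge 1$ and $i\ge 1$. A vertex of $G_j$ lying in at least $i$ of $S_1,\dots,S_j$ falls into one of two cases. Either it already lies in at least $i$ of $S_1,\dots,S_{j-1}$, or it lies in $S_j$ and in at least $i-1$ of $S_1,\dots,S_{j-1}$. Hence
\[
L_i(G_j,\vec S_{\le j})\ \subseteq\ L_i(G_j,\vec S_{\le j-1})\ \cup\ \bigl(S_j\cap L_{i-1}(G_j,\vec S_{\le j-1})\bigr).
\]

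\emph{The first term.} Condition~1 gives $G_j\subseteq G_{j-1}$, so
\[
L_i(G_j,\vec S_{\le j-1})\subseteq L_i(G_{j-1},\vec S_{\le j-1}).
\]
By the induction hypothesis this set has size at most $\frac{|G|}{2^i}\frac{j-1}{a}$.

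\emph{The second term.} It suffices to show it has size at most $\frac{|G|}{2^i}\frac{1}{a}$. By condition~2, $\hat S_j\subseteq V(G_j)$ has size at most $k$, and $S_j\cap V(G_j)\subseteq N^r_{G_j}[\hat S_j]$. Therefore
\[
S_j\cap L_{i-1}(G_j,\vec S_{\le j-1})\ \subseteq\ \bigcup_{u\in\hat S_j} L_{i-1}\bigl(N^r_{G_j}[u],\vec S_{\le j-1}\bigr).
\]
The remaining step is to bound each ball's contribution by $\frac{|G|}{2^i ka}$, and we split on $i$.

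\emph{Case $i\ge 2$.} Condition~3 says $G_j$ is $(2ka,r)$-clean with respect to $\vec S_{\le j-1}$. Applied with index $i-1\ge 1$, each term in the union has size at most
\[
\frac{|G|}{2^{i-1}}\cdot\frac{1}{2ka}=\frac{|G|}{2^i ka}.
\]

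\emph{Case $i=1$.} Now $L_0$ is the whole ball, so we need $|N^r_{G_j}[u]|\le \frac{|G|}{2ka}$. Since $G_j\subseteq G_1$, we have $N^r_{G_j}[u]\subseteq N^r_{G_1}[u]$. Condition~4 says $G_1$ is $(ka,r)$-clean with respect to the one-element sequence $(V(G))$, and the first level set of a ball for that sequence is the whole ball. Hence $|N^r_{G_1}[u]|\le \frac{|G|}{2}\cdot\frac{1}{ka}$, as needed.

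\emph{Conclusion.} In both cases, summing over the at most $k$ centres $u\in\hat S_j$ bounds the second term by $\frac{|G|}{2^i a}$. Adding the first term gives
\[
\abs{L_i(G_j,\vec S_{\le j})}\le \frac{|G|}{2^i}\cdot\frac{j-1}{a}+\frac{|G|}{2^i}\cdot\frac{1}{a}=\frac{|G|}{2^i}\cdot\frac{j}{a},
\]
which completes the induction.

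The main point of care is the $i=1$ case. There the level set $L_0$ carries no information, and one must use condition~4 for $G_1$ and transfer it to $G_j$ through the containment $G_j\subseteq G_1$ of condition~1. One must also use that $\hat S_j$ lies inside $G_j$, so that the balls are taken in the graph to which the cleanness conditions apply.
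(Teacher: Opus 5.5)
Your proof is correct and follows essentially the same argument as the paper. You split $L_i(G_j,\vec S_{\le j})$ into the vertices already at level $i$ before $S_j$ (handled by induction and $G_j\subseteq G_{j-1}$) plus the vertices of $S_j$ at level $i-1$, and bound the latter ball-by-ball via the cleanness conditions. The only difference is cosmetic: the paper treats $i=1$ separately by summing $|S_q|\le |G|/(2a)$ over $q\le j$ (using Condition~4 and $G_q\subseteq G_1$), whereas you fold that same bound into the inductive step, which works cleanly if you take the base case $j=0$ with $G_0:=G$.
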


\begin{proof}
First we prove the case $i=1$. Let $1\leq q\leq \ell$. Since $\hat S_q$ is a $(k,r)$-cover of $S_q$ in $G_q$, we have
\[
S_q\subseteq \bigcup_{x\in \hat S_q}N^r_{G_q}[x].
\]
Moreover, $G_q\subseteq G_1$. Hence, for every $x\in \hat S_q$, we have
$N^r_{G_q}[x]\subseteq N^r_{G_1}[x]$. By Condition 4, $G_1$ is $(ka,r)$-clean with respect to $G$, and so

\[
\abs{S_q}
\quad \leq\quad 
\sum_{x\in \hat S_q}\abs{N^r_{G_q}[x]}
\quad \leq\quad 
|\hat S_q|\frac{|G|}{2ka}
\quad \leq\quad 
\frac{|G|}{2a}.
\]
Therefore, for every $1\leq j\leq \ell$,
\[
\abs{L_1(G_j,\vec S_{\leq j})}
\quad \leq\quad 
\sum_{q\leq j}\abs{S_q}
\quad \leq\quad 
\frac{|G|}{2}\frac{j}{a}.
\]

We now prove the statement for $i\geq 2$ by induction on $j$. The case $j=1$ is trivial, since
$L_i(G_1,\vec S_{\leq 1})=\emptyset$ for every $i\geq 2$. Let $j\geq 1$, and suppose that the statement holds for $j$. We prove it for $j+1$. If $i>j+1$, then
$L_i(G_{j+1},\vec S_{\leq j+1})=\emptyset$, so there is nothing to prove. Thus, we may assume that $2\leq i\leq j+1$. We have
\[
\abs{L_i(G_{j+1},\vec S_{\leq j+1})}
\quad \leq\quad 
\abs{L_i(G_j,\vec S_{\leq j})}
\ +\ 
\abs{L_{i-1}(S_{j+1},\vec S_{\leq j})}.
\]
By the induction hypothesis, or trivially if $i>j$,
\[
\abs{L_i(G_j,\vec S_{\leq j})}
\quad \leq\quad 
\frac{|G|}{2^i}\frac{j}{a}.
\]
It remains to bound the second term. Since $\hat S_{j+1}$ is a $(k,r)$-cover of $S_{j+1}$ in $G_{j+1}$,
\[
\abs{L_{i-1}(S_{j+1},\vec S_{\leq j})}
\quad \leq\quad 
\sum_{x\in \hat S_{j+1}}
\abs{L_{i-1}(N^r_{G_{j+1}}[x],\vec S_{\leq j})}.
\]
Since $i\geq 2$, we have $i-1\geq 1$. Also, by Condition 3, $G_{j+1}$ is $(2ka,r)$-clean with respect to $\vec S_{\leq j}$. Hence
\[
\abs{L_{i-1}(N^r_{G_{j+1}}[x],\vec S_{\leq j})}
\quad \leq\quad 
\frac{|G|}{2^{i-1}}\frac{1}{2ka}.
\]
Therefore
\[
\abs{L_{i-1}(S_{j+1},\vec S_{\leq j})}
\quad \leq\quad 
|\hat S_{j+1}|\frac{|G|}{2^{i-1}}\frac{1}{2ka}
\quad \leq\quad 
k\frac{|G|}{2^{i-1}}\frac{1}{2ka}
\quad =\quad 
\frac{|G|}{2^i a}.
\]
Combining both inequalities yields
\[
\abs{L_i(G_{j+1},\vec S_{\leq j+1})}
\quad \leq\quad 
\frac{|G|}{2^i}\frac{j}{a}
\;+\;
\frac{|G|}{2^i a}
\quad =\quad 
\frac{|G|}{2^i}\frac{j+1}{a},
\]
as desired.
\end{proof}

\begin{lemma}\label{lemma: layered sets disjoint cores}
    Let $a,k,n,p,r$ be positive integers such that $a> pk\ceil{\log n}$ and $n\geq 2$.
    Let $G$ be a $n$-vertex graph and $\vec{S}$, $\vec{\hat{S}}$, and $\vec{G}$ be a $(k,r,a)$-cleaning sequence in $G$ of length $pk\ceil{\log n}$. Then there exists a subsequence ${i_1},\dots, {i_p}$ such that $\hat S_{i_e}\cap S_{i_f}= \mt$ for every $e\neq f$.
\end{lemma}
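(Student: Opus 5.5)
The plan is to split the required disjointness into two directions by the order of the indices. One direction comes for free from Condition 2; the other needs a counting argument from \Cref{lem:smallLevels}, followed by a greedy selection.

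\emph{Easy direction.} Let $i<j$. By Condition 2, $\hat S_i\cap V(G_j)=\mt$. Since $\hat S_j$ is an $r$-cover of $S_j$ in $G_j$, we have $S_j\subseteq V(G_j)$. Hence $\hat S_i\cap S_j=\mt$ automatically. So for any subsequence $i_1<\dots<i_p$ it only remains to ensure $\hat S_{i_f}\cap S_{i_e}=\mt$ whenever $e<f$. In words: the cover of a later set must avoid every earlier chosen set. Say that $j$ \emph{conflicts with} $i<j$ if $\hat S_j\cap S_i\neq\mt$.

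\emph{Counting conflicts.} I will show that each index $j$ conflicts with fewer than $k\ceil{\log n}$ earlier indices. Apply \Cref{lem:smallLevels} with level $i=\ceil{\log n}$ and the given $j$. Since $j\le pk\ceil{\log n}<a$, it gives
\[
\abs{L_{\ceil{\log n}}(G_j,\vec S_{\le j})}\ \le\ \frac{n}{2^{\ceil{\log n}}}\cdot\frac{j}{a}\ <\ 1 .
\]
So this level set is empty, and every vertex of $G_j$ lies in at most $\ceil{\log n}-1$ of the sets $S_1,\dots,S_j$. The vertices of $\hat S_j$ lie in $G_j$, because $\hat S_j$ is a cover taken inside $G_j$; this is the point to state carefully, reading Condition 2 as $\hat S_j\subseteq V(G_j)$. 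Since $|\hat S_j|\le k$, the set $\hat S_j$ meets at most $k(\ceil{\log n}-1)$ of the sets $S_i$ with $i<j$. So $j$ conflicts with at most $k\ceil{\log n}-1$ earlier indices.

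\emph{Greedy extraction, from the top down.} Start with all indices $\{1,\dots,pk\ceil{\log n}\}$ available and repeat the following step.
\begin{enumerate}
\item Select the largest available index $j$.
\item Discard $j$ itself together with every smaller index $j$ conflicts with.
\end{enumerate}
Each round removes at most $k\ceil{\log n}$ indices, so with $pk\ceil{\log n}$ indices at least $p$ rounds succeed. Index the selected ones increasingly as $i_1<\dots<i_p$.

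\emph{Why the selection works.} Take $e<f$. When $i_f$ was selected, every smaller index conflicting with it was discarded. Since $i_e$ survived and was chosen later, $i_f$ does not conflict with $i_e$, so $\hat S_{i_f}\cap S_{i_e}=\mt$. Together with the easy direction, this gives $\hat S_{i_e}\cap S_{i_f}=\mt$ for all $e\neq f$.

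The only delicate step is the counting bound. It relies on the strict inequality $j<a$, supplied by the hypothesis $a>pk\ceil{\log n}$, and on the membership $\hat S_j\subseteq V(G_j)$.
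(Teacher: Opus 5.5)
Your proof is correct and follows essentially the paper's argument: the paper inducts on $p$, uses \Cref{lem:smallLevels} at level $\ceil{\log n}$ to show that $\hat S_\ell$ for the last index $\ell$ meets fewer than $k\ceil{\log n}$ of the sets, discards those indices, and recurses on the remaining subsequence, which is exactly your top-down greedy written as an induction. Your version is slightly more explicit than the paper's. You apply the level bound to each selected index within the original sequence, so you never need closure under subsequences, and you spell out the reverse disjointness $\hat S_{i_e}\cap S_{i_f}=\emptyset$ for $i_e<i_f$ via Condition 2, which the paper leaves implicit.
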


\begin{proof}
    By induction on $p$. If $p=1$ then the statement is trivially true. Let $\ell = pk \ceil{\log n}$. 
    By \cref{lem:smallLevels}, we have that 
    $$
    \abs{L_{\ceil{\log n}}(G_\ell,\vec{S}_{\leq \ell})} 
    \quad \leq\quad 
    \frac{n}{2^{\ceil{\log n}}} \frac{\ell}{a}
    \quad  <\quad 
    1
    $$ 
    so $L_{\ceil{\log n}}(G_\ell,\vec{S}_{\leq \ell}) = \mt$. Therefore, there are fewer than $k \ceil{\log n }$ of the sets in $S$ intersecting $\hat{S}_\ell$. 
    Taking the subsequence not intersecting $\hat{S}_\ell$ and applying the induction hypothesis lets us conclude.
\end{proof}

To make use of the above, we will require a recent result of Chudnovsky, Codsi, and Kaneshiro~\cite{chudnovsky2026coarseLayeredSets}.
Recall that, for a positive integer $t$, we denote by $\mathcal{F}_t$ the class of $K_{t,t}$-induced-minor-free graphs.

\begin{proposition}
\label{r distance lemma}
    For all $t,r\in\nat$, there exists a positive integer $\mu=\mu(r,t)$ such that the following holds.         
    Let $C,s\in \nat$ such that $C\geq 2$, let $G\in \mathcal{F}_t$ with $\omega(G)<s$, and let $Y \subseteq V(G)$.
    Then there is a subset $X \subseteq V(G)$ with $|X| \leq (Cs)^\mu$ such that for every $v\in G\sm X$ we have that 
    $$
    |N^{r}_{G\sm X}[v]\cap Y|
    \ \ \leq\ \ 
    \frac{|Y|}{C}.
    $$
\end{proposition}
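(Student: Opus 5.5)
The plan is to argue by induction on $r$. The case $r=1$ should follow from a degeneracy count, and larger radii should reduce to it by a coarse contraction argument. The essential input is that graphs in $\mathcal F_t$ with clique number less than $s$ are $d$-degenerate for some $d\le s^{c(t)}$, where $c(t)$ depends only on $t$. This is the polynomial-degeneracy result for $K_{t,t}$-induced-minor-free graphs of \cite{bourneuf2024polynomial}, already mentioned in the introduction; I take it as a black box.

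\emph{Base case $r=1$.} Let $H$ be the set of vertices $v$ with $|N[v]\cap Y|>|Y|/C$.
\begin{itemize}
\item If $|Y|\le 2dC$, take $X=Y$. Then $|X|\le 2dC\le (Cs)^{\mu}$ once $\mu$ is large in terms of $c(t)$, and every ball misses $Y$.
\item Otherwise, double-count the edges between $H$ and $Y$ in the $d$-degenerate graph $G[H\cup Y]$. There are at least $|H|\,(|Y|/C-1)$ and at most $d(|H|+|Y|)$ of them. This gives $|H|\le 2dC+O(1)$, and we take $X=H$.
\end{itemize}
Deleting vertices only shrinks neighbourhoods. Hence every $v\notin X$ satisfies $|N_{G\setminus X}[v]\cap Y|\le |N_G[v]\cap Y|\le |Y|/C$, since $v\notin H$. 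So $X$ works.

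\emph{Radius $r$.} The aim is to handle radius $r$ by reducing to radius $1$ in an auxiliary graph built from $G$ by contracting a family of disjoint connected pieces of radius $O(r)$. One natural choice is the parts of a greedy maximal $2r$-scattered set together with their Voronoi cells. Weights are pushed onto the contracted vertices by letting each piece inherit its $Y$-count. The resulting graph $G'$ is an induced minor of $G$, so it is still in $\mathcal F_t$. We then apply a weighted version of the base-case count in $G'$, with a multiplicatively adjusted constant $C'=C\cdot\mathrm{poly}(s)$, and lift the deleted pieces back to $G$. The lifted pieces are unions of balls rather than sets of few vertices. So I would run this step only to reduce to a smaller radius: delete one central vertex per bad piece and argue that the remaining bad $r$-balls now force a bad $(r-1)$-ball structure. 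Then apply induction, composing the polynomial bounds so that the total size stays $(Cs)^{\mu(r,t)}$.

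\emph{Expected main obstacle.} The contraction step requires that the contracted graph $G'$ still has clique number polynomial in $s$, so that the degeneracy bound applies there. Contraction can create large cliques. A clique of size $m$ among pieces of radius $O(r)$, in a graph with $\omega<s$ and no $K_{t,t}$ induced minor, should be ruled out for $m\ge \mathrm{poly}_{r,t}(s)$ by a Ramsey-type argument: among $m$ pairwise touching connected sets, either many of them share a clique of $G$, or one can extract pairwise non-touching subfamilies that touch each other densely and form a Squiggly $K_{t',t'}$-like structure, which is then converted to a $K_{t,t}$ induced minor as in the proof of \Cref{lem:squigglyktt}. I would attempt this first, since everything else is counting. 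If it fails, the fallback is an iterative potential argument instead of contraction. Repeatedly pick a vertex whose $r$-ball in the current graph carries more than $|Y|/C$ of $Y$, delete a small hitting set of its ball's ``core'', and bound the number of rounds by polynomial degeneracy at radius $r$.
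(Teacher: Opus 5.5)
\textbf{There is a genuine gap: the radius-$r$ step is missing.} The paper does not prove this statement; it is quoted from \cite{chudnovsky2026coarseLayeredSets}. So your proposal has to stand on its own, and for $r\ge 2$ it does not.

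Your radius-$1$ case is fine up to constants. Edges inside $H\cap Y$ are counted twice in your lower bound, so you need a threshold such as $|Y|\ge 2C(2d+1)$ to get $|H|\le 4dC$. But this is the easy case; the paper uses the proposition at radius $2r+1=17$.

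Your induction step has no working mechanism.
\begin{itemize}
\item Contracting Voronoi cells of a maximal $2r$-scattered set does not turn $r$-balls of $G$ into closed neighbourhoods of $G'$. Hang a path of length $r$ at each vertex of a path $x_0x_1x_2$. The three free ends form a maximal $2r$-scattered set, and $x_0,x_1,x_2$ land in three different cells. The cells of $x_0$ and $x_2$ are non-adjacent in $G'$, although $x_2\in N^2_G[x_0]$. In general a path of length $r$ can cross $r+1$ cells. So a radius-$1$ count in $G'$ does not control $|N^r_{G\setminus X}[v]\cap Y|$.
\item You note yourself that lifting deleted pieces costs whole balls. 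Your proposed fix is to delete one central vertex per bad piece, after which heavy $r$-balls ``force a bad $(r-1)$-ball structure''. This is asserted with no mechanism: paths in $G\setminus X$ simply route around isolated deleted vertices, and no radius decreases.
\item The fallback potential argument presupposes ``polynomial degeneracy at radius $r$''. That is precisely what is missing.
\end{itemize}

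\textbf{The missing idea is a quantitative depth-$r$ sparsity input; degeneracy alone cannot suffice.} Take the $1$-subdivision of $K_m$, which is $2$-degenerate with $\omega=2$. Let $Y$ be the branch vertices, $r=2$ and $C=4$. Then every valid $X$ has $|X|>m/2$ once $m>8$. Indeed, suppose $|X|\le m/2$. The surviving branch vertices $R$ satisfy $|R|\ge m/2$, and each may keep at most $m/4-1$ subdivided edges into $R$. So $X$ contains at least $|R|(|R|-m/4)/2\ge m^2/16$ subdivision vertices, a contradiction. This graph escapes the hypotheses only through a $K_{t,t}$ induced minor with radius-$1$ branch sets.

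Your ``main obstacle'' names the right crux: a clique-number bound polynomial in $s$ for every induced minor of $G$ whose branch sets have radius $O(r)$. But the Ramsey sketch (extracting non-touching, densely touching subfamilies from pairwise touching sets) is not an argument.

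If you had that bound, one natural way to finish is via weak colourings rather than contraction:
\begin{itemize}
\item Every depth-$r$ minor is a subgraph of such an induced minor, which lies in $\mathcal F_t$. So \Cref{prop:degree_bounded} makes depth-$r$ minors polynomially sparse in $s$.
\item Standard bounded-expansion estimates (to be cited or checked) then give $\mathrm{wcol}_r(G)\le w$ with $w$ polynomial in $s$ for fixed $r,t$.
\item Fix an order witnessing this and set $Q(z)=\{u\in Y: z\in\mathrm{WReach}_r[u]\}$. Then $\sum_z|Q(z)|=\sum_{u\in Y}|\mathrm{WReach}_r[u]|\le w|Y|$.
\item Let $X:=\{z:|Q(z)|>|Y|/(Cw)\}$, so $|X|<Cw^2$.
\item For $v\notin X$, take any path of length at most $r$ in $G\setminus X$ from $v$ to some $u\in Y$. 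Its order-minimum vertex $z$ lies in $\mathrm{WReach}_r[v]\setminus X$ and satisfies $u\in Q(z)$. Hence $|N^r_{G\setminus X}[v]\cap Y|\le w\cdot|Y|/(Cw)=|Y|/C$.
\end{itemize}
As written, your proposal supplies neither the shallow-minor bound nor a valid reduction from radius $r$.
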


We denote the set $Z$ obtained by using \cref{r distance lemma} to a set $Y$ as $Z(G,Y,r,C)$.

\begin{lemma}\label{Lemma: result layered sets}
    Let $r,t\in \nat$. There exists a positive integer $\mu=\mu(r,t)$ such that the following holds
    Let $a,k,n,p,s$ be positive integers such that $n\ge 2$ and $a> pk\ceil{\log n}$ and let $\prop$ be a hereditary graph property. 
    Let $G$ be a $n$-vertex graph in a hereditary $(k,r,\prop)$-close subclass of $\mathcal{F}_t$ such that $\omega(G)<s$.
    There exists a set $T\subseteq V(G)$, with $|T|\leq(2aks)^{\mu}$ such that either $G$ is $T$-close to $\prop$ or there exist sets $S_1,\dots,S_p \subseteq V(G)$, pairwise disjoint and anticomplete, containing at most $k$ connected components, internally $(k,r)$-coverable and such that $G\sm T$ is $S_i$-close to $\prop$.
\end{lemma}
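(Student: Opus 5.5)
We will build a $(k,r,a)$-cleaning sequence greedily. At each step we will use \cref{r distance lemma} to clean the current graph, and then use $(k,r,\prop)$-closeness of the cleaned graph to produce the next set. The set $T$ will be the union of everything deleted by the cleaning steps.

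\emph{Initialization.} Apply \cref{r distance lemma} to $G$ with $Y=V(G)$ and $C=ka$. This gives $Z_0$ with $|Z_0|\le (kas)^{\mu_0}$, and we set $G_1=G\sm Z_0$. For $v\in G_1$ we get $|N^r_{G_1}[v]|\le n/(ka)$, so Condition 4 holds.

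\emph{Step $i$.} Suppose $G_i$ is constructed, and let $T_i=V(G)\sm V(G_i)$.
\begin{enumerate}
\item If $G_i=G\sm T_i$ is in $\prop$, we stop with $T=T_i$ in the first outcome.
\item Otherwise, since the class is hereditary and $(k,r,\prop)$-close, there is a set $S_i\subseteq V(G_i)$ with a $(k,r)$-cover $\hat S_i$ in $G_i$ such that $G_i\sm S_i\in\prop$.
\item To form $G_{i+1}$, delete $\hat S_i$ from $G_i$, so that Condition 2's disjointness holds for later indices.
\item Then, for each level $b=1,\dots,\lceil\log n\rceil$, apply \cref{r distance lemma} to $Y_b=L_b(V(G_i),\vec S_{\le i})$ with $C_b=2ka\cdot 2^b|Y_b|/n$. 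This guarantees $|N^r\cap Y_b|\le n/(2^b\,2ka)$.
\item Levels with $|Y_b|\le n/(2^b 2ka)$ need no cleaning. For the others $C_b\ge 1$ is fine (take $\max(C_b,2)$), and $C_b\le 2ka\cdot 2^b\cdot \ell/(a2^b)$ is polynomial in $ka$ by \cref{lem:smallLevels}, since $|Y_b|\le n\ell/(2^b a)$ for $\ell\le a$.
\item Delete the union of these sets. Taking $N^r$ in the smaller graph only shrinks neighbourhoods, and hereditariness in $\mathcal F_t$ lets us apply the proposition successively.
\end{enumerate}
Each step therefore deletes at most $k+\lceil\log n\rceil\,(4k^2a^2 s)^{\mu_0}$ vertices. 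We run for $\ell=pk\lceil\log n\rceil<a$ steps, so $|T|\le a\cdot\lceil\log n\rceil\,(4k^2a^2s)^{\mu_0}+\dots$, and this is at most $(2aks)^{\mu}$ for suitable $\mu(r,t)$ because $\log n<a$.

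\emph{Second outcome.} If we never stop, we have a cleaning sequence of length $pk\lceil\log n\rceil$.
\begin{enumerate}
\item \cref{lemma: layered sets disjoint cores} extracts indices $i_1<\dots<i_p$ with $\hat S_{i_e}\cap S_{i_f}=\emptyset$.
\item Set $T$ to be everything deleted overall, i.e.\ $V(G)\sm V(G_{\ell+1})$ together with the covers. Then replace $S_{i_e}$ by $S'_e$, the union of the components of $G[S_{i_e}\sm T]$ meeting $\hat S_{i_e}$-balls. Alternatively, and more naturally, define $S'_e$ as the set of vertices of $S_{i_e}$ within distance $r$ of $\hat S_{i_e}$ inside $G[S_{i_e}]$.
\end{enumerate}

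\emph{Main obstacle.} The hard part is to obtain simultaneously:
\begin{enumerate}
\item pairwise anticompleteness and disjointness;
\item at most $k$ components, each internally $(k,r)$-coverable;
\item $G\sm T$ being $S'_e$-close to $\prop$.
\end{enumerate}
For item 3, use heredity: $G\sm T\sm S_{i_e}\subseteq G_{i_e}\sm S_{i_e}\in\prop$. For item 2, choose each $S_i$ minimal, or replace it by $\bigcup_{x\in\hat S_i}N^r_{G_i}[x]$ intersected suitably, so that $S_i$ is a union of at most $k$ connected $r$-balls containing their centres. This makes $S_i$ internally coverable with at most $k$ components.

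For item 1 I would strengthen the extraction. Require $N[\hat S_{i_e}]$-type disjointness, namely that the balls $N^{r+1}$ around centres avoid later $S$'s. Since centres of earlier sets are deleted from later $G_j$, but later sets may still touch earlier ones, I would put $N^{r}$-neighbourhoods of the chosen centres into the level-set bookkeeping. Concretely, I would redefine $S_i$ as the full ball $N^{r+1}_{G_i}[\hat S_i]$; this is still $(k,r+1)$-coverable, and that costs only a change of $r$ in $\mu$. Then $\hat S_{i_e}\cap S_{i_f}=\emptyset$ implies that the radius-$r$ cores are disjoint and anticomplete. Finally, I would shrink each $S'_e$ to the radius-$r$ core and check that $\prop$-closeness survives, since the removed annulus lies in $T$ or in the other sets. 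This matching of radii between cleaning and anticompleteness is where I expect the real care to be needed.
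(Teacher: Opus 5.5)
\paragraph{Gap in the radius bookkeeping.} Your overall scheme matches the paper: greedy cleaning via \cref{r distance lemma}, stopping when the remaining graph is in $\prop$, then extracting indices with \cref{lemma: layered sets disjoint cores}. The step you yourself flag as the crux, however, is resolved incorrectly.

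You propose to feed the balls $N^{r+1}_{G_i}[\hat S_i]$ into the cleaning sequence. You then claim that $\hat S_{i_e}\cap S_{i_f}=\emptyset$ makes the radius-$r$ cores disjoint and anticomplete. For $e<f$ the only informative direction is $\hat S_{i_f}\cap N^{r+1}_{G_{i_e}}[\hat S_{i_e}]=\emptyset$; the other direction is automatic, since $\hat S_{i_e}$ has already been deleted from $G_{i_f}$. This only gives $\dist_{G_{i_e}}(\hat S_{i_e},\hat S_{i_f})\ge r+2$.

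Two radius-$r$ balls can touch as soon as their centres are at distance at most $2r+1$, so the claim fails for every $r\ge 1$. For $r=1$, let $x\in\hat S_{i_e}$ and $y\in\hat S_{i_f}$ be joined by a shortest path $x\,u\,w\,y$ of $G_{i_e}$ with $u,w\in V(G_{i_f})$. Your disjointness hypothesis holds, yet $u\in N^1[x]$ and $w\in N^1[y]$ are adjacent.

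\paragraph{The fix.} The repair, which is exactly what the paper does, is to keep two sets per step.
\begin{itemize}
\item The output set is $S_j:=N^r_{G_j}[\hat S_j]$. It contains the original witness, so $G_j\sm S_j\in\prop$ by heredity. Your justification that the removed annulus lies in $T$ or in the other sets is neither needed nor correct.
\item The bookkeeping set is $S'_j:=N^{2r+1}_{G_j}[\hat S_j]$, which is $(k,2r+1)$-covered by $\hat S_j$.
\end{itemize}
One then builds a $(k,2r+1,a)$-cleaning sequence on $\vec S'$, applying \cref{r distance lemma} with radius $2r+1$, which is why the paper takes $\mu=\mu'(2r+1,t)+3$. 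Now \cref{lemma: layered sets disjoint cores} yields $\hat S_{i_f}\cap S'_{i_e}=\emptyset$, i.e.\ $\dist_{G_{i_e}}(\hat S_{i_e},\hat S_{i_f})>2r+1$. This is precisely what forces $S_{i_e}$ and $S_{i_f}$ to be disjoint and anticomplete: if they touched, there would be a path of length at most $2r+1$ in $G_{i_e}\supseteq G_{i_f}$ between the two covers.

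\paragraph{A minor slip.} Condition~4 asks for $|N^r_{G_1}[v]|\le n/(2ka)$, so the initial application of \cref{r distance lemma} needs $C=2ka$ rather than $ka$.
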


\begin{proof}
    Let $\mu = \mu'(2r+1,t)+3$ where $\mu'$ is defined as in \cref{r distance lemma}.
     Let $G_0=G$, $\hat{S}_0=\mt$ and $T_0 = Z(G,G,2r+1,2ka)$. Set \(\ell=pk\ceil{\log n}\). For every $0< j\leq \ell$, let $G_{j} =G_{j-1}\sm (T_{j-1}\cup \hat{S}_{j-1})$, $S_j$ be a $(k,r)$-coverable set in $G_j$  with cover $\hat{S}_j$ such that $G_j$ is $S_j$-close to $\prop$. By enlarging $S_j$ if necessary, we may assume that $S_j=N^r_{G_j}[\hat S_j]$. Let $S_j' = N_{G_j}^{2r+1}[\hat{S}_j]$. Let $$T_j = \bigcup_{i\leq j} Z\left(G_j,L_i(G_j,\vec{S}_{\leq j}'),2r+1,2ak\right).$$ Finally, let $T=\bigcup_{0\leq j\leq \ell} \left( T_j \cup \hat{S}_j \right)$. By \cref{r distance lemma}, we have that $|T| \leq a^2 (2aks)^{\mu'(2r+1,t)} + ka \leq (2aks)^{\mu}$.
     
     If $G$ is $T$-close to $\prop$ there is nothing to show. Therefore, we may assume that $G\sm T \notin \prop$.
     We check that \(\vec S'\), \(\vec{\hat S}\), and \(\vec G\) form a \((k,2r+1,a)\)-cleaning sequence. We prove this by induction on $\ell$. 
     If $\ell = 1$, we have that we have that $G_1 = G\sm T_0$ and so $G_1$ is $(ka,2r+1)$-clean with respect to $G$, and the other conditions to be a $(k,2r+1,a)$-cleaning sequence are clearly satisfied. We may assume that $\vec{S}_{\leq\ell-1}'$, $\vec{\hat{S}}_{\leq\ell-1}$ and $\vec{G}_{\leq\ell-1}$ forms a $(k,2r+1,a)$-cleaning sequence. Since $(k,2r+1,a)$-cleaning sequences are closed under taking prefixes, it suffices to verify that $G_\ell$ is $(2ka,2r+1)$-clean with respect to $\vec{S'}_{\leq \ell-1 }$. For every $m\in \nat$ and $v\in G_\ell$, we have that 
     \begin{align*}
         \abs{L_m\left(N_{G_\ell}^{2r+1}[v] ,\vec{S}_{\leq \ell-1 }'\right)} \quad
         &\leq\quad \frac{\abs{L_m(G_{\ell-1},\vec{S}_{\leq \ell-1 }')}}{2ka}\\
         &\leq\quad \frac{|G|}{2^m} \frac{\ell-1}{a} \frac{1}{2ka}\\
         &\leq\quad \frac{|G|}{2^m} \frac{1}{2ka}.
     \end{align*}
     Therefore,  $\vec{S}, \vec{S'},\vec{G}$ form a $(k,2r+1,a)$-cleaning sequence.


    We apply \cref{lemma: layered sets disjoint cores} to the cleaning sequence
$\vec S'$, $\vec{\hat S}$, and $\vec G$. Thus, there are indices
$i_1<\cdots<i_p$ such that $\hat S_{i_e}\cap S'_{i_f}=\emptyset$ for every
$e\neq f$.
For $e\in[p]$, let $R_e=S_{i_e}$. We claim that $R_1,\dots,R_p$ satisfy the
conclusion. Fix $e<f$. Since $\hat S_{i_f}\cap S'_{i_e}=\emptyset$ and
$S'_{i_e}=N^{2r+1}_{G_{i_e}}[\hat S_{i_e}]$, while
$\hat S_{i_f}\subseteq V(G_{i_f})\subseteq V(G_{i_e})$, we have
$\dist_{G_{i_e}}(\hat S_{i_e},\hat S_{i_f})>2r+1$. Since
$G_{i_f}\subseteq G_{i_e}$, if $R_e$ and $R_f$ were not disjoint and
anticomplete, then there would be a path in $G_{i_e}$ of length at most
$2r+1$ from $\hat S_{i_e}$ to $\hat S_{i_f}$, a contradiction. Thus the sets
$R_1,\dots,R_p$ are pairwise disjoint and anticomplete.

Moreover, each $R_e$ is internally $(k,r)$-coverable, with cover
$\hat S_{i_e}\subseteq R_e$, and has at most $k$ connected components. Finally,
since $G\sm T\subseteq G_{i_e}$ and $G_{i_e}\sm R_e\in\prop$, the heredity of
$\prop$ implies that $(G\sm T)\sm R_e\in\prop$. Thus $G\sm T$ is $R_e$-close to
$\prop$ for every $e\in[p]$, and the sets $R_1,\dots,R_p$ satisfy the conclusion.
     
\end{proof}

Next, we will specialize \cref{Lemma: result layered sets} to two specific hereditary properties.
We need the following theorem of Chudnovsky, Codsi, E S, and Lokshtanov~\cite{coarsetw}. Their statement is phrased in terms of the \defn{distance-$r$-independence number} rather than coverability, but a straightforward argument, given in the fourth paragraph of their preliminaries, shows that the two formulations are equivalent; we state the version below.

\begin{proposition}\label{prop:coarse_treewidth_our_class}
    Let $t$ be a positive integer and let $G\in\calc_t$ be an $n$-vertex graph. Then there exists a positive integer $d(t)$ and $\epsilon(t)\in(0,1]$, such that $G$ has a tree decomposition $(T,\chi)$ where for every $x\in V(T)$, $\chi(x)$ is $(2^{d(t) \log^{1-\epsilon(t)} n},8)$-coverable in $G$.
\end{proposition}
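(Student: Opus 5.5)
This proposition is quoted from Chudnovsky, Codsi, E S, and Lokshtanov~\cite{coarsetw}, so the plan is to reduce it to their statement rather than reprove their decomposition. Their theorem gives a tree decomposition $(T,\chi)$ of $G\in\calc_t$ in which every bag has small \emph{distance-$r$-independence number}. Concretely, for $r=8$ (or the radius they use) no bag contains more than $k:=2^{d(t)\log^{1-\epsilon(t)}n}$ vertices that are pairwise at distance greater than $r$ in $G$. Only the translation between this parameter and coverability needs to be checked.

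The translation is a maximality argument. Fix a bag $\chi(x)$ and choose a maximal set $\hat S\subseteq\chi(x)$ whose vertices are pairwise at distance greater than $r'$ in $G$. By maximality, every $v\in\chi(x)$ is within distance $r'$ of some vertex of $\hat S$. Hence $\hat S$ is an $r'$-cover of $\chi(x)$ in $G$, and $|\hat S|$ is at most the distance-$r'$-independence number of the bag. If their bound is stated for parameter $r'$ with $r'\le 8$, we directly get that $\chi(x)$ is $(k,8)$-coverable, since coverability with a smaller radius implies it with radius $8$.

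If instead their independence notion uses a convention like ``distance at least $r$'' or ``balls of radius $r$ are disjoint'' (distance greater than $2r$), the radius changes by a factor of about $2$. In that case I would check their parameters and confirm that the resulting radius is still at most $8$. The tree decomposition itself is used unchanged; only the bound on each bag is reinterpreted. Distances are taken in $G$ throughout, matching the proposition's ``coverable in $G$''.

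The only real obstacle is bookkeeping: matching the exact radius convention in~\cite{coarsetw} so that the constant $8$ is correct, and absorbing any constant factor into $d(t)$. The constants $d(t)$ and $\epsilon(t)$ are inherited directly from their theorem.
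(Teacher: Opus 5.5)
Your proposal is correct and takes the same route as the paper. The paper imports the statement from \cite{coarsetw}, whose theorem bounds the distance-$r$-independence number of each bag, and converts this to coverability by the maximal distance-independent-set argument you give; it defers that argument to the preliminaries of \cite{coarsetw}, and like you it leaves the radius bookkeeping that yields radius $8$ to that source.
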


\begin{remark}
    For a positive integer $t$ and a graph $G\in\calc_t$, we use $\kball(t,n)$ to denote $2^{d(t) \log^{1-\epsilon(t)} n}$, when $t,n$ are clear from context, we simply write $\kball$. Similarly, we use $\dball$ to denote $d(t)$ and $\eball$ to denote $\epsilon(t)$.
\end{remark}


We note that for every weight function $w$, being $(w,\varepsilon)$-balanced is a hereditary property of labeled graphs (using $w$ as a labeling function on the vertices). 
A set $X \subseteq V(G)$ is a $(w,\varepsilon)$-balanced separator of $G$ if $G$ is $X$-close to be $(w,\varepsilon)$-balanced. We say that $G$ admits $(k,r)$-balanced separators if for every weight function, $G$ is $(k,r,((w,\varepsilon)\text{-balanced}))$-close. 
Similarly, a hereditary class of graphs is $(k,r,\varepsilon)$-separable if every graph in it is $(k,r,\varepsilon)$-separable.
We say that a hereditary class $\mathcal G$ is \defn{$(f(n),r,\varepsilon)$-separable} if every $n$-vertex graph $G\in\mathcal G$ admits a $(f(n),r)$-coverable $(w,\varepsilon)$-balanced separator for every weight function $w$ on $V(G)$.

\begin{corollary}\label{cor:balanced_separator_our_class}
Let $t$ be a positive integer. Then $\mathcal C_t$ is
$(4\kball(t,n),8,\frac14)$-separable.
\end{corollary}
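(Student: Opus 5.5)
We combine \cref{prop:coarse_treewidth_our_class} with the central-bag fact \cref{prop: central bag}. Fix $t$, an $n$-vertex graph $G\in\calc_t$, and a weight function $w$ on $V(G)$. \Cref{prop:coarse_treewidth_our_class} gives a tree decomposition $(T,\chi)$ of $G$ in which every bag is $(\kball(t,n),8)$-coverable in $G$. By \cref{prop: central bag}, some bag $X=\chi(x)$ is a $(w,\frac12)$-balanced separator. This has the right coverability but only balance $\frac12$, whereas we need $\frac14$. Heredity is automatic, since $\calc_t$ is closed under induced subgraphs. For an induced subgraph $G'$ on $n'\le n$ vertices we apply the proposition to $G'$ directly, which yields $\kball(t,n')\le\kball(t,n)$. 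The $n$ in $(f(n),r,\varepsilon)$-separability refers to the graph's own size, so monotonicity of $\kball$ in $n$ is all that is needed there.

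To improve the balance from $\frac12$ to $\frac14$, we iterate the central-bag argument a bounded number of times. After removing $X_1=\chi(x)$, at most one component $C$ of $G\sm X_1$ can have $w(C)>\frac14$ unless several components do; in any case, every component has weight at most $\frac12$. The plan is a standard refinement. Let $\mathcal{C}$ be the components with weight $>\frac14$; there are at most $3$ of them, since each has weight $\le\frac12$ and they are disjoint with total weight at most $1$. For each such $C$, restrict $w$ to $C$ and normalize it, giving $w_C=w|_C/w(C)$. The restriction of $(T,\chi)$ to $C$ is a tree decomposition of $G[C]$, so \cref{prop: central bag} gives a bag $\chi(x_C)$ such that every component of $G[C]\sm\chi(x_C)$ has $w_C$-weight at most $\frac12$, that is, $w$-weight at most $\frac12 w(C)\le\frac14$. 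Set $X=X_1\cup\bigcup_{C\in\mathcal C}\chi(x_C)$. Components of $G\sm X$ are contained either in components of $G\sm X_1$ of weight $\le\frac14$ or in components of $G[C]\sm\chi(x_C)$. Hence $X$ is a $(w,\frac14)$-balanced separator.

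For coverability, $X$ is a union of at most $1+3=4$ bags, and each bag is $(\kball,8)$-coverable in $G$. The union of the covers is therefore a $(4\kball,8)$-cover of $X$ in $G$. The step needing care is that coverability is measured in $G$ itself (balls $N^8_G$), not in $G[C]$; this is fine because \cref{prop:coarse_treewidth_our_class} certifies each bag of the original decomposition, and the restricted bags are subsets of original bags. The count of heavy components is the only real point to check. The bound "at most 3" is even generous: if components have weight $>\frac14$ and total weight $\le1$, there are at most three of them, giving $1+3=4$ bags. This yields exactly the constant $4\kball(t,n)$, with radius $8$ and balance $\frac14$.
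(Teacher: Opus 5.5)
Your proof is correct and follows the paper's argument. You take a central bag of the decomposition from \cref{prop:coarse_treewidth_our_class}, then split each of the at most three components of weight more than $\frac14$ with a second central bag, giving $4\kball(t,n)$ balls of radius $8$ in total. The only difference is in how the second bag is obtained: the paper re-applies \cref{prop:coarse_treewidth_our_class} to each heavy component $G[C_i]$, and then needs $\kball(t,|C_i|)\le\kball(t,n)$ together with the fact that radius-$8$ balls in $G[C_i]$ lie inside radius-$8$ balls in $G$. You instead restrict the original decomposition to $C$, so each new separator is a subset of an original bag and its $(\kball(t,n),8)$-coverability in $G$ is immediate, which is a slightly more economical variant.
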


\begin{proof}

Let \(G\in\calc_t\) be an \(n\)-vertex graph and let \(w\) be a weight function on \(V(G)\).
If $w(G)=0$, then the empty set is a $(w,\frac{1}{4})$-balanced separator, so we may
assume that $w(G)>0$. Let $(T,\chi)$ be a tree decomposition of $G$ as in
\Cref{prop:coarse_treewidth_our_class}. Applying \cref{prop: central bag} to
$(T,\chi)$ and $w$ gives a bag $S_1$ which is a $(w,\frac{1}{2})$-balanced separator
in $G$. In particular, every component of $G\sm S_1$ has weight at most
$1/2$.

Let $C_1,\ldots,C_m$ be the components of $G\sm S_1$ with weight more than
$1/4$. Since these components are pairwise disjoint, we have $m\leq 3$.
For each $i\in[m]$, define a weight function $w_i$ on $G[C_i]$ by
$w_i(v)=w(v)/w(C_i)$. Since $\mathcal C_t$ is hereditary, $G[C_i]\in\mathcal C_t$.
Applying \Cref{prop:coarse_treewidth_our_class} to $G[C_i]$ and \cref{prop: central bag} to $w_i$, we obtain a set $S_i'\subseteq C_i$ which is a
$(w_i,\frac12)$-balanced separator in $G[C_i]$ and is
$(\kball(t,|C_i|),8)$-coverable in $G[C_i]$. Since $|C_i|\leq n$, this implies
that $S_i'$ is $(\kball(t,n),8)$-coverable in $G$.

Let $S:=S_1\cup\bigcup_{i=1}^m S_i'$. Then $S$ is
$(4\kball(t,n),8)$-coverable in $G$, since $S_1$ and each $S_i'$ can be covered
by at most $\kball(t,n)$ balls of radius $8$, and $m\leq 3$.

It remains to show that $S$ is $(w,\frac14)$-balanced. Let $C$ be a component
of $G\sm S$. If $C$ is contained in a component of $G\sm S_1$ that is not among
$C_1,\ldots,C_m$, then $w(C)\leq 1/4$. Otherwise,
$C\subseteq C_i\sm S_i'$ for some $i\in[m]$. Since $S_i'$ is
$(w_i,\frac12)$-balanced in $G[C_i]$, we have $w_i(C)\leq 1/2$, and therefore
$w(C)\leq w(C_i)/2\leq 1/4$. Thus $S$ is a $(w,\frac14)$-balanced separator
in $G$.
\end{proof}

To make the results of this section more readily usable in the next sections, we introduce some definitions:
\begin{itemize}[label = --, leftmargin = 2em]
    \item A \defn{clean-set-packing} in $G$ is a pair $\bigl(D,\set{(S_i,C_i):i\in[\ell]}\bigr)$, where $D\subseteq V(G)$ and $S_i,C_i\subseteq V(G)$ for $i\in[\ell]$, satisfying the following, with $D':=D\cup\bigcup_{i=1}^\ell C_i$: for every distinct $i,j\in[\ell]$, the sets $S_i\cup C_i$ and $S_j\cup C_j$ are disjoint and anticomplete in $G$. 
    
    \item We call $\ell$ the \defn{length} of the clean-set-packing.
    Let $c,k$ be positive integers.
    A clean-set-packing is \defn{$k$-thick} if for every $i\in[\ell]$, the subgraph of $G-(D'\setminus C_i)$ induced by $S_i\cup C_i$ has at most $k$ connected components and \defn{$c$-costly} if $|D'| = c$.

    \item A clean-set-packing $(D,\set{(S_i,C_i):i\in[\ell]})$ in $G$, is a \defn{\balpack} if for every $i\in[\ell]$ the set $S_i$ is a $(Y',\frac{1}{2})$-balanced separator in $G'$, where $G':= G-D'$ and $Y' := Y\cap V(G')$.

\end{itemize}

\begin{lemma}\label{lem:Y_packing_new}
Let $t,\ell$ be positive integers. There exists a positive integer
$\mu=\mu(t)$ such that the following holds. Let $G\in\calc_t$ be an
$n$-vertex graph with $n\geq 2$ and $\omega(G)\leq \omega$, and let
$Y\subseteq V(G)$. Set
\[
K:=4\kball(t,n),\qquad
h':=\ell K(\ceil{\log n}+1),\qquad
\kappa:=\bigl(2h'K(\omega+1)\bigr)^\mu.
\]
Then either $G$ has a $(Y,\frac12)$-balanced separator of size at most
$\kappa$, or $G$ has a $(Y,\frac12)$-packing of length $\ell$ that is
$K$-thick and has cost at most $\kappa$.
\end{lemma}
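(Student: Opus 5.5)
We obtain the statement by specializing \cref{Lemma: result layered sets} to the hereditary property $\prop_Y$ of (labeled) graphs ``being $(Y,\frac12)$-balanced''. To keep the property fixed as vertices are deleted, we encode $Y$ as a vertex label: a labeled graph $H$ lies in $\prop_Y$ if every component of $H$ contains at most $\frac12|Y\cap V(H)|$ labeled vertices. Normalizing by the surviving labeled vertices is exactly how the \balpack{} definition measures balance in $G'=G-D'$. This property is hereditary in the sense needed (deleting vertices and then separating), or we can replace it by the slightly stronger ``$X$-close'' notion used inside the cleaning argument.

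First we show that $\calc_t$, together with the label $Y$, is $(K,8,\prop_Y)$-close. Take $H\in\calc_t$ an induced subgraph of $G$, and apply \cref{cor:balanced_separator_our_class} to $H$ with the weight function induced by $Y\cap V(H)$. This gives a $(w,\frac14)$-balanced separator, hence a $(w,\frac12)$-balanced one, that is $(4\kball(t,|H|),8)$-coverable in $H$. Since $\kball$ is monotone in $n$, this set is also $(K,8)$-coverable, so $H$ is $(K,8,\prop_Y)$-close. The class is $\calc_t\subseteq\mathcal F_t$, and $\omega(G)<s:=\omega+1$.

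We then apply \cref{Lemma: result layered sets} with $r=8$, $k=K$, $p=\ell$, and $a:=h'=\ell K(\ceil{\log n}+1)>\ell K\ceil{\log n}$. This yields a set $T$ with $|T|\le(2h'K(\omega+1))^{\mu}=\kappa$, where $\mu=\mu(8,t)$ depends only on $t$. There are two outcomes.
\begin{itemize}
\item If $G$ is $T$-close to $\prop_Y$, then $T$ is a $(Y,\frac12)$-balanced separator of size at most $\kappa$. (The normalization here is relative to $Y\cap V(G-T)$, so we check this against the definition; at worst we replace $\frac12$ by the corresponding fraction of $|Y|$, which is weaker and therefore fine.)
\item Otherwise we obtain pairwise disjoint, anticomplete sets $S_1,\dots,S_\ell$. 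Each has at most $K$ components and is internally $(K,8)$-coverable, and $G\sm T$ is $S_i$-close to $\prop_Y$.
\end{itemize}

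In the second case we build the packing by setting $D:=T\setminus\bigcup_i S_i$ and $C_i:=\emptyset$. Then $D'=D$, and we take the pairs $(S_i,\emptyset)$, after discarding $T\cap S_i$; alternatively, we choose $S_i$ disjoint from $T$, which the construction in the lemma gives since $S_i\subseteq G_{i}$ and $T$ is removed. Disjointness and anticompleteness come directly from the lemma. $K$-thickness is the bound on the number of components. The cost is $|D'|\le|T|\le\kappa$; if exact cost equality is required, we pad $D$ or read ``cost at most'' loosely. Balancedness requires that $S_i$ be a $(Y',\frac12)$-balanced separator in $G-D'$, which is exactly the statement that $(G\sm T)\sm S_i\in\prop_Y$ with labels $Y'=Y\cap V(G\sm T)$.

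The main obstacle is making the property genuinely hereditary with the right normalization. ``Balanced with respect to $Y\cap V(H)$'' is not monotone under vertex deletion, because the denominator shrinks. I would first try the labeled formulation, in which the fixed global weight $1/|Y|$ is replaced by the relative weight. If heredity fails, I would instead fix the weight function $w=Y$ on the whole of $G$, where balance is hereditary, and use the fact that $Y'$-balance in $G'$ follows once no component exceeds $\frac12|Y'|$. That last step may need the $\frac14$ slack from \cref{cor:balanced_separator_our_class}, used when $|Y\cap T|$ is small, with the case of large $|Y\cap T|$ handled separately.
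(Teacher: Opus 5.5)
Your overall route matches the paper's: check closeness via \cref{cor:balanced_separator_our_class}, then apply \cref{Lemma: result layered sets} with $k=K$, $r=8$, $p=\ell$, $a=h'$, $s=\omega+1$. The genuine gap is in turning that lemma's output into a clean-set-packing.

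The sets $R_1,\dots,R_\ell$ it returns (your $S_i$) are \emph{not} disjoint from $T$. In its proof $R_e=N^8_{G_{i_e}}[\hat S_{i_e}]$, while $T=\bigcup_j(T_j\cup\hat S_j)$ contains every cover $\hat S_j$ and the later $T_j$'s. Hence $\hat S_{i_e}\subseteq R_e\cap T$. Your remark that the construction makes $S_i$ disjoint from $T$ because $S_i\subseteq G_i$ is therefore false: $T$ also contains everything deleted at stages $\geq i$, including $\hat S_i$ itself.

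With $C_i=\emptyset$ there is then no correct place for $R_i\cap T$.
\begin{itemize}
\item If it goes into $D$ (so $D'=T$ and $S_i=R_i\setminus T$), the separator condition is fine. But $K$-thickness now concerns $G[R_i\setminus T]$, whose number of components is uncontrolled, since deleting the ball centres $\hat S_i$ can shatter the balls.
\item If it stays out of $D$ (your $D:=T\setminus\bigcup_i S_i$), then after removing $S_i$ the graph $G-D'$ still contains vertices of $T\cap\bigcup_j R_j$. These can reconnect components of $(G\setminus T)\setminus R_i$, so $(G\setminus T)\setminus R_i\in\mathcal P$ no longer yields the separator condition.
\end{itemize}
This is exactly what the $C_i$ in the definition are for. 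Take $C_i:=R_i\cap T$, $S_i:=R_i\setminus T$ and $D:=T\setminus\bigcup_i C_i$. Then $D'=T$, so the separator condition is checked in $G\setminus T$. Thickness is checked in $G-(D'\setminus C_i)$, where $S_i\cup C_i=R_i$ induces $G[R_i]$, which has at most $K$ components.

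Second, you never commit to a property. Your primary $\mathcal P_Y$ normalizes by the surviving part of $Y$, so it is not hereditary and \cref{Lemma: result layered sets} does not apply to it. Moreover, your closeness check uses a separator normalized by $|Y\cap V(H)|$ rather than by the survivors, so it does not even certify membership in $\mathcal P_Y$.

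The fallback you mention is the right argument and should be the proof. For $Y\neq\emptyset$, fix $w(v)=1/|Y|$ on $Y$ and $0$ elsewhere, and let $\mathcal P$ be ``$(w,\frac14)$-balanced''. This property is hereditary, and every induced subgraph $H$ is $(K,8,\mathcal P)$-close by \cref{cor:balanced_separator_our_class} applied with $w|_{V(H)}$.

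The case you leave to be ``handled separately'' takes one line. Let $Y':=Y\cap V(G\setminus T)$. If $|Y'|\le|Y|/2$, then $T$ itself is a $(Y,\frac12)$-balanced separator of size at most $\kappa$, which is the first alternative of the lemma. Otherwise every component of $(G\setminus D')\setminus S_i=(G\setminus T)\setminus R_i$ contains at most $|Y|/4<|Y'|/2$ vertices of $Y$, which is the required $(Y',\frac12)$-balance.
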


\begin{proof}
If $Y=\emptyset$, then the empty set is a $(Y,\frac12)$-balanced separator, so
we may assume that $Y\neq\emptyset$. Define a weight function
$w:V(G)\to\mathbb R_{\geq 0}$ by setting $w(v)=1/|Y|$ for $v\in Y$, and
$w(v)=0$ otherwise. Let $\mathcal P$ be the hereditary property of labeled
induced subgraphs $H\subseteq G$ such that $H$ is $(w,\frac14)$-balanced.

We first check that every induced subgraph of $G$ is $(K,8,\mathcal P)$-close.
Let $H\subseteq G$ be an induced subgraph. Since $\calc_t$ is hereditary,
$H\in\calc_t$. By \cref{cor:balanced_separator_our_class}, applied to $H$ and
the restricted weight function $w|_{V(H)}$, there exists a
$(w|_{V(H)},\frac14)$-balanced separator $X$ in $H$ which is
$(4\kball(t,|H|),8)$-coverable in $H$. Since $|H|\leq n$, the set $X$ is
$(K,8)$-coverable in $G$, and $H\sm X\in\mathcal P$. Thus $H$ is
$(K,8,\mathcal P)$-close.

Since $\calc_t\subseteq\mathcal F_t$, we may apply
\cref{Lemma: result layered sets} with parameters $k=K$, $r=8$, $p=\ell$,
$s=\omega+1$, and $a = h'$. Since
$h'>\ell K\ceil{\log n}$, we obtain a set $T\subseteq V(G)$ with
$|T|\leq\kappa$ such that either $G\sm T\in\mathcal P$, or there exist sets
$R_1,\dots,R_\ell$ which are pairwise disjoint and anticomplete, each have at
most $K$ connected components and are internally $(K,8)$-coverable, and such
that $(G\sm T)\sm R_i\in\mathcal P$ for every $i\in[\ell]$.

If $G\sm T\in\mathcal P$, then $T$ is a $(Y,\frac14)$-balanced separator, and
hence a $(Y,\frac12)$-balanced separator. Thus we may assume that the second
case holds. Set $Y':=Y\cap V(G\sm T)$. If $|Y'|\leq |Y|/2$, then $T$ is a
$(Y,\frac12)$-balanced separator, so we may assume that $|Y'|>|Y|/2$.

For every $i\in[\ell]$, set $C_i:=R_i\cap T$, $S_i:=R_i\setminus T$, and let
$D:=T\setminus\bigcup_{i\in[\ell]}C_i$. Then
$D':=D\cup\bigcup_{i\in[\ell]}C_i=T$, and $S_i\cup C_i=R_i$ for every $i$.
As above, $(D,\{(S_i,C_i):i\in[\ell]\})$ is a $K$-thick clean-set-packing of
length $\ell$ and cost at most $\kappa$.

It remains to check the separator condition. Since
$(G\sm T)\sm R_i=(G\sm D')\sm S_i\in\mathcal P$, every component of
$(G\sm D')\sm S_i$ contains at most $|Y|/4$ vertices of $Y$. Since
$|Y'|>|Y|/2$, this is at most $|Y'|/2$. Hence $S_i$ is a
$(Y',\frac12)$-balanced separator in $G\sm D'$ for every $i\in[\ell]$.
Therefore the clean-set-packing is a $(Y,\frac12)$-packing.
\end{proof}

\begin{remark}
    For positive integers $t,q,\ell,n,\omega$, with $n\geq 2$, we use $\kcost(t,\ell,n,\omega)$ to denote the function $\kappa$ defined in \Cref{lem:Y_packing_new}. We also use $\mucostbal(t)$ to refer to the constant implied by \Cref{lem:Y_packing_new} and when $t$ is clear from context, we simply write $\mucostbal$.
\end{remark}

For fixed \(A,B\subseteq V(G)\), let \(\mathcal P_{A,B}\) be the hereditary
property of induced subgraphs \(H\subseteq G\) such that no connected component
of \(H\) contains both a vertex of \(A\) and a vertex of \(B\).

We say that a graph \(G\) is \defn{\((k,r,q)\)-slim-pair separable} if, for
every two sets \(A,B\subseteq V(G)\) which are \(q\)-slim in \(G\), there exists
a \((k,r)\)-coverable set \(X\subseteq V(G)\) such that \(G\) is \(X\)-close to
\(\mathcal P_{A,B}\). Equivalently, every \(q\)-slim pair \(A,B\) admits a
\((k,r)\)-coverable \(A\)--\(B\) separator.
A graph class \(\mathcal G\) is \defn{\((k,r,q)\)-slim-pair separable} if every
graph \(G\in\mathcal G\) is \((k,r,q)\)-slim-pair separable.

We now prove a similar packing lemma for $A$--$B$ separators. To do so, we will need \cref{cor:AB_separator_our_class}. 

\begin{corollary}\label{cor:AB_separator_our_class}
Let $t,q$ be positive integers. Then $\calc_t$ is
$(q\kball(t,n),8,q)$-slim-pair separable. Equivalently, if $G\in\calc_t$ is an
$n$-vertex graph and $A,B\subseteq V(G)$ are $q$-slim in $G$, then $G$ is
$(q\kball(t,n),8,\mathcal P_{A,B})$-close.
\end{corollary}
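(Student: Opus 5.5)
We will prove this by a Menger-type argument applied inside the coarse tree decomposition of \Cref{prop:coarse_treewidth_our_class}. Fix an $n$-vertex $G\in\calc_t$ and sets $A,B\subseteq V(G)$ that are $q$-slim. It suffices to find an $A$--$B$ separator $X$ (that is, $G\setminus X\in\mathcal P_{A,B}$) that is $(q\kball(t,n),8)$-coverable in $G$. The heredity half of the "equivalently" is automatic: for an induced subgraph $H$, a pair that is $q$-slim in $H$ need not be $q$-slim in $G$, so we run the argument directly in $H\in\calc_t$ and use $|H|\le n$. Coverability inside $H$ then implies coverability in $G$, because balls in $H$ are contained in the corresponding balls in $G$.

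\textbf{Step 1: reduce to a greedy path-packing.} Take a tree decomposition $(T,\chi)$ of $G$ whose bags are $(\kball,8)$-coverable. We build, greedily, a family of $A$--$B$ paths $P_1,P_2,\dots$ together with bags $\chi(x_1),\chi(x_2),\dots$. At step $j$, let $X_j:=\chi(x_1)\cup\dots\cup\chi(x_{j-1})$.
\begin{itemize}
\item If $G\setminus X_j$ contains no $A$--$B$ path, stop. In this case $X_j$ is a union of at most $j-1$ bags, so it is $((j-1)\kball,8)$-coverable.
\item Otherwise, pick a shortest (hence induced) $A$--$B$ path $P_j$ in $G\setminus X_j$. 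Choose $x_j$ so that $\chi(x_j)$ separates $P_j$ "maximally", in the following sense. Root $T$ and let $x_j$ be a node whose bag meets $N[V(P_j)]$ and is as deep as possible toward one side of the subtree spanned by $N[P_j]$. The aim is that $\chi(x_j)$ hits every later path that could touch $P_j$.
\end{itemize}

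\textbf{Step 2: the key claim.} Once $q$ such paths are produced, their interiors are pairwise disjoint and anticomplete, which contradicts $q$-slimness. Hence we stop after at most $q$ rounds, and the final separator is $(q\kball,8)$-coverable, as required. To establish disjointness and anticompleteness, we choose $x_j$ so that $\chi(x_j)$ contains a separation of the tree structure. Concretely, we consider the subtree $T_j$ of nodes whose bags meet $N[V(P_j)]\setminus X_j$. The standard Erdős–Pósa-style argument for subtrees of a tree says the following: among the subtrees $T_1,T_2,\dots$, either $q$ of them are pairwise disjoint, or a bounded number of nodes hit all of them. Pairwise disjoint subtrees give anticomplete paths, since adjacency or intersection of two paths forces a shared bag. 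So we reorganise the greedy procedure as this Helly-type argument: repeatedly take the path whose subtree has the deepest top node, and delete that top bag.

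\textbf{Main obstacle.} The difficulty is that deleting a bag $\chi(x_j)$ also deletes vertices of $A$ or $B$, and we must make sure a single deleted node meets every remaining path whose subtree intersects $T_j$. The deepest-top choice gives exactly this: any subtree meeting $T_j$ that was selected later has its top at or above the top of $T_j$, and therefore contains $x_j$.

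A second subtlety concerns the endpoints. The paths may share vertices in $A$ or $B$, and the slimness definition only constrains the interiors. So we work with the interiors of the paths, and first handle vertices of $A\cap N[B]$ (paths of length at most $1$) by separately adding the bags containing them via the same subtree argument, or alternatively by contracting $A$ and $B$ conceptually. This endpoint handling is where we expect to need care to keep the count at $q$ rather than $2q$.
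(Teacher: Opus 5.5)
Your overall strategy matches the paper's proof. You attach to each $A$--$B$ path the subtree of $T$ formed by the bags it meets, apply the Erd\H{o}s--P\'osa property for subtrees of a tree (the paper cites \Cref{lem:Helly_subtrees}; your deepest-top greedy is its standard proof), and output the union of fewer than $q$ bags. However, the step that bounds the number of rounds fails as you set it up. The problem is that you define $T_j$ as the nodes whose bags meet $N[V(P_j)]\setminus X_j$. Then "$x_j$ lies in the subtree of $P'$" only means that $\chi(x_j)$ meets $N[V(P')]$, not $V(P')$. So deleting $\chi(x_j)$ need not destroy $P'$, or even $P_j$, because the top bag of $T_j$ may consist only of neighbours of $P_j$. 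The selected paths then need not have pairwise disjoint subtrees (the same path can be selected again), and nothing bounds the number of rounds by $q$.

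Here is a concrete failure. Take $A=\{a\}$, $B=\{b\}$, $ab\in E(G)$, and pendant neighbours $c_1,\dots,c_m$ of $a$. Use the path decomposition with bags $\{c_1\},\{c_1,c_2\},\dots,\{c_1,\dots,c_m\},\{a,b,c_1,\dots,c_m\}$, rooted at the first bag. The pair is $2$-slim, yet your rule deletes $m$ bags that miss $\{a,b\}$ before it reaches the last one. The fix is to use $T_P:=\{x\in V(T):\chi(x)\cap V(P)\neq\emptyset\}$. Disjointness of $T_P$ and $T_{P'}$ already forces $P$ and $P'$ to be vertex-disjoint and anticomplete, since every vertex and every edge of $G$ lies in some bag; this is exactly the reason you give. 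The closed neighbourhood therefore buys nothing and breaks the hitting property.

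Your endpoint worry is unfounded, and the remedy you suggest would hurt. With $T_P$ defined on all of $V(P)$, endpoints included, $q$ paths with pairwise disjoint subtrees are themselves pairwise vertex-disjoint and anticomplete. A fortiori their interiors are, so $A,B$ would be $q$-wide. Hence the family $\{T_P\}$ contains no $q$ pairwise disjoint members, and there is $Z\subseteq V(T)$ with $|Z|<q$ meeting every $T_P$. The set $X=\bigcup_{z\in Z}\chi(z)$ then meets every $A$--$B$ path, including those of length $0$ or $1$, and is $((q-1)\kball,8)$-coverable. No separate treatment of $A\cap N[B]$ and no factor $2$ is needed. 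Working with interiors instead would be wrong: a path of length at most one has empty interior, so its subtree is empty and is never hit, and the resulting set need not separate $A$ from $B$. With these two corrections your argument becomes the paper's proof.
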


Its proof is a coarse analogue of a well-known one (see \cite{diestel2025graph}), but we include it here for completeness. We will need the following:

\begin{proposition}[Erd\H{o}s–Pósa property for subtrees of a tree \cite{gyarfas1970helly}]\label{lem:Helly_subtrees}
Let $T$ be a tree and let $\mathcal T$ be a family of nonempty subtrees of $T$.
For every positive integer $q$, either $\mathcal T$ contains $q$ pairwise
vertex-disjoint subtrees, or there exists a set $Z\subseteq V(T)$ with
$|Z|<q$ such that $Z$ intersects every subtree in $\mathcal T$.
\end{proposition}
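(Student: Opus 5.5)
The plan is to prove this by a greedy argument on a rooted version of $T$. Fix an arbitrary root $\rho$ of $T$. For every nonempty subtree $T'\subseteq T$ there is a unique vertex $\mathrm{top}(T')\in V(T')$ closest to $\rho$, and every vertex of $T'$ is a descendant of $\mathrm{top}(T')$. This holds because the path in $T$ from any $v\in V(T')$ to $\mathrm{top}(T')$ stays inside $T'$ (as $T'$ is connected), and it would pass through any closer vertex of $T'$.

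\emph{The greedy procedure.} Set $\mathcal T_0:=\mathcal T$. For $i=1,2,\dots$, as long as $\mathcal T_{i-1}\neq\emptyset$, do the following:
\begin{enumerate}
  \item choose $T_i\in\mathcal T_{i-1}$ whose top $z_i:=\mathrm{top}(T_i)$ has maximum depth among the tops of members of $\mathcal T_{i-1}$;
  \item let $\mathcal T_i$ consist of the members of $\mathcal T_{i-1}$ that do not contain $z_i$.
\end{enumerate}
The process stops after some number $m$ of steps, when $\mathcal T_m=\emptyset$. At that point $Z:=\{z_1,\dots,z_m\}$ meets every subtree in $\mathcal T$, since each subtree was removed exactly when it contained some $z_i$. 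So if $m<q$ we obtain the required hitting set $Z$ with $|Z|<q$.

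\emph{The main step.} It remains to show that $T_1,\dots,T_m$ are pairwise vertex-disjoint; then $m\geq q$ yields $q$ disjoint subtrees. Take $i<j$ and suppose some vertex $v$ lies in $V(T_i)\cap V(T_j)$. Both $z_i$ and $z_j$ are ancestors of $v$, so they lie on the root-to-$v$ path. Since $T_j\in\mathcal T_{i-1}$, the choice of $T_i$ gives $\mathrm{depth}(z_j)\le\mathrm{depth}(z_i)$. Hence $z_i$ lies on the path from $v$ to $z_j$. That path is contained in $T_j$ because $T_j$ is connected, so $z_i\in V(T_j)$. But then $T_j$ would have been discarded at step $i$, that is, $T_j\notin\mathcal T_i$, contradicting that $T_j$ was chosen later.

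The only delicate point is this disjointness step, and specifically the choice of the \emph{deepest} top. That choice is exactly what forces $z_i$ onto the path from $v$ to $z_j$ inside $T_j$. The rest is bookkeeping.
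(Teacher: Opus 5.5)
Your proof is correct. The paper gives no proof of this proposition; it only cites Gy\'arf\'as and Lehel. Your argument therefore supplies a self-contained proof where the paper relies on a reference.

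The deepest-top greedy is the standard argument for this fact, and it proves slightly more than the statement. The $z_i$ are distinct, because $z_j\in V(T_j)$ and $T_j$ survived step $i<j$. So the $m$ pairwise disjoint subtrees $T_1,\dots,T_m$ come with a hitting set $Z$ of exactly the same size $m$. Hence the maximum number of pairwise disjoint members of $\mathcal T$ equals the minimum size of a set meeting all of them.

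One phrase should be tightened: ``it would pass through any closer vertex of $T'$'' does not quite justify the properties of $\mathrm{top}(T')$. The clean argument is as follows. If $u,v\in V(T')$ and $u$ is not an ancestor of $v$, then the lowest common ancestor of $u$ and $v$ is a proper ancestor of $u$. It lies on the $u$--$v$ path in $T$, hence in $T'$, and is strictly closer to $\rho$ than $u$. Taking $u=\mathrm{top}(T')$ gives the descendant property. Taking $u\neq v$ both of minimum depth gives uniqueness of the top.
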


\begin{proof}[Proof of \cref{cor:AB_separator_our_class}]
Let $G\in\calc_t$ be an $n$-vertex graph, and let $A,B\subseteq V(G)$ be
$q$-slim in $G$. Let $(T,\chi)$ be a tree decomposition of $G$ as in
\Cref{prop:coarse_treewidth_our_class}. Thus every bag is
$(\kball(t,n),8)$-coverable in $G$.

For every $A$--$B$ path $P$ in $G$, let
\(
T_P=\{x\in V(T):\chi(x)\cap V(P)\neq\emptyset\}.
\)
Then $T_P$ is a subtree of $T$. We claim that the family
$\mathcal T=\{T_P:P\text{ is an }A\text{--}B\text{ path in }G\}$ contains no
$q$ pairwise vertex-disjoint subtrees. Indeed, if
$T_{P_1},\dots,T_{P_q}$ are pairwise vertex-disjoint, then the paths
$P_1,\dots,P_q$  are pairwise vertex-disjoint and anticomplete, contradicting that $A$ and $B$ are $q$-slim. Indeed, a common vertex, or an edge between two of the paths, would be contained
in some bag, forcing the corresponding subtrees to intersect.  

By \Cref{lem:Helly_subtrees}, there exists a set $Z\subseteq V(T)$ with
$|Z|<q$ such that $Z$ intersects every subtree in $\mathcal T$. Let
\[
X=\bigcup_{z\in Z}\chi(z).
\]
Then $X$ intersects every $A$--$B$ path in $G$. Hence no connected component of
$G\sm X$ contains both a vertex of $A$ and a vertex of $B$, and so
$G\sm X\in\mathcal P_{A,B}$.

Finally, since each bag $\chi(z)$ is $(\kball(t,n),8)$-coverable in $G$ and
$|Z|<q$, the set $X$ is $(q\kball(t,n),8)$-coverable in $G$. Thus $G$ is
$(q\kball(t,n),8,\mathcal P_{A,B})$-close, as desired.
\end{proof}

Here again, we introduce some notation to make the following result more easily usable in the next sections.  A clean-set-packing $(D,\set{(S_i,C_i):i\in[\ell]})$ in $G$, is an \defn{\abpack} if for every $i\in[\ell]$ the set $S_i$ is an $A'$--$B'$ separator in $G'$, where $G':= G-D', A' := A\cap V(G')$, and $B':= B\cap V(G')$.

\begin{lemma}\label{lem:AB_packing_new}
Let $t,q,\ell$ be positive integers. There exists a positive integer
$\mu=\mu(t)$ such that the following holds. Let $G\in\calc_t$ be an
$n$-vertex graph with $n\geq 2$ and $\omega(G)\leq \omega$, and let
$A,B\subseteq V(G)$ be $q$-slim in $G$. Set
\[
\tau:=q\kball(t,n),\qquad
h:=\ell \tau(\ceil{\log n}+1),\qquad
c_{A,B}:=\bigl(2h\tau(\omega+1)\bigr)^\mu .
\]
Then either $G$ has an $A$--$B$ separator of size at most $c_{A,B}$, or $G$
has an \abpack of length $\ell$ that is $\tau$-thick and has cost at most
$c_{A,B}$.
\end{lemma}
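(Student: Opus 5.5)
The plan mirrors the proof of \cref{lem:Y_packing_new}, with the balanced-separator property replaced by $\mathcal P_{A,B}$ and \cref{cor:balanced_separator_our_class} replaced by \cref{cor:AB_separator_our_class}. We take $\mu$ to be the constant $\mu(8,t)$ supplied by \cref{Lemma: result layered sets}, so it depends only on $t$.

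\textbf{Step 1: closeness.} We check that every induced subgraph $H\subseteq G$ is $(\tau,8,\mathcal P_{A,B})$-close. Let $A_H:=A\cap V(H)$ and $B_H:=B\cap V(H)$. The pair $A_H,B_H$ is $q$-slim in $H$: any family of $A_H$--$B_H$ paths in $H$ with pairwise vertex-disjoint and anticomplete interiors is also such a family of $A$--$B$ paths in $G$, since $H$ is induced. Because $\calc_t$ is hereditary, $H\in\calc_t$. Then \cref{cor:AB_separator_our_class} gives a set $X$ that is $(q\kball(t,|H|),8)$-coverable in $H$ with $H\sm X\in\mathcal P_{A_H,B_H}$. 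Since $\kball$ is monotone in $n$, $X$ is $(\tau,8)$-coverable. Moreover $\mathcal P_{A_H,B_H}$ coincides with $\mathcal P_{A,B}$ restricted to induced subgraphs of $H$. This shows $\mathcal P_{A,B}$ is hereditary and the class of induced subgraphs of $G$ is a $(\tau,8,\mathcal P_{A,B})$-close subclass of $\mathcal F_t$.

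\textbf{Step 2: apply the layered-sets lemma.} We apply \cref{Lemma: result layered sets} with $k=\tau$, $r=8$, $p=\ell$, $s=\omega+1$ and $a=h$. Since $h>\ell\tau\ceil{\log n}$, we obtain a set $T$ with $|T|\le(2h\tau(\omega+1))^\mu=c_{A,B}$. One of two outcomes holds:
\begin{enumerate}
\item $G\sm T\in\mathcal P_{A,B}$. Then $T$ meets every $A$--$B$ path, so $T$ is an $A$--$B$ separator of size at most $c_{A,B}$ and we are done.
\item There are pairwise disjoint and anticomplete sets $R_1,\dots,R_\ell$, each with at most $\tau$ components and internally $(\tau,8)$-coverable, with $(G\sm T)\sm R_i\in\mathcal P_{A,B}$ for every $i$.
\end{enumerate}

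\textbf{Step 3: build the packing.} In the second outcome, set $C_i:=R_i\cap T$, $S_i:=R_i\sm T$ and $D:=T\sm\bigcup_i C_i$, so that $D'=T$. Then $S_i\cup C_i=R_i$, and these sets are pairwise disjoint and anticomplete, so we have a clean-set-packing of length $\ell$ and cost $|T|\le c_{A,B}$.

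For $\tau$-thickness, note that $G-(D'\sm C_i)$ induced on $R_i$ is just $G[R_i]$, which has at most $\tau$ components. The separator condition follows because $(G-D')\sm S_i=(G\sm T)\sm R_i\in\mathcal P_{A,B}$. Hence no component of $G'\sm S_i$ contains both a vertex of $A'$ and a vertex of $B'$, i.e.\ $S_i$ is an $A'$--$B'$ separator in $G'$.

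\textbf{Expected obstacle.} There is no real obstacle. The only points needing care are that $q$-slimness passes to induced subgraphs (handled in Step 1) and the thickness bookkeeping. Unlike the $(Y,\frac12)$ case, no reweighting step is needed, since the $A$--$B$ separation property restricts exactly to $G'$.
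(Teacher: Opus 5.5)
Your proposal is correct and follows the paper's proof essentially step for step. It uses the same check that every induced subgraph is $(\tau,8,\mathcal P_{A,B})$-close via \cref{cor:AB_separator_our_class}, the same application of \cref{Lemma: result layered sets} with $k=\tau$, $r=8$, $p=\ell$, $s=\omega+1$, $a=h$, and the same construction $C_i:=R_i\cap T$, $S_i:=R_i\setminus T$, $D:=T\setminus\bigcup_i C_i$ with $D'=T$. Your extra remarks are accurate and only make explicit what the paper leaves implicit: that $q$-slimness passes to induced subgraphs, that the cover of $X$ lives in $H$, and that no reweighting is needed here, unlike in \cref{lem:Y_packing_new}.
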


\begin{proof}
Let $\mathcal P:=\mathcal P_{A,B}$. We first check that every induced subgraph
of $G$ is $(\tau,8,\mathcal P)$-close. Let $H\subseteq G$ be an induced subgraph.
Since $\calc_t$ is hereditary, $H\in\calc_t$. Moreover, $A\cap V(H)$ and
$B\cap V(H)$ are $q$-slim in $H$. By \cref{cor:AB_separator_our_class}, there
is an $(A\cap V(H))$--$(B\cap V(H))$ separator $X$ in $H$ which is
$(q\kball(t,|H|),8)$-coverable in $H$. Since $|H|\leq n$, the set $X$ is
$(\tau,8)$-coverable in $G$, and $H\sm X\in\mathcal P$. Thus $H$ is
$(\tau,8,\mathcal P)$-close.

Since $\calc_t\subseteq\mathcal F_t$, we may apply
\cref{Lemma: result layered sets} with parameters $k=\tau$, $r=8$, $p=\ell$,
$s=\omega+1$, and $a = h$. Since
$h>\ell \tau\ceil{\log n}$, we obtain a set $T\subseteq V(G)$ with
$|T|\leq c_{A,B}$ such that either $G\sm T\in\mathcal P$, or there exist sets
$R_1,\dots,R_\ell$ which are pairwise disjoint and anticomplete, each have at
most $\tau$ connected components and are internally $(\tau,8)$-coverable, and such
that $(G\sm T)\sm R_i\in\mathcal P$ for every $i\in[\ell]$.

In the first case, $T$ is an $A$--$B$ separator of size at most $c_{A,B}$. Thus
we may assume that the second case holds. For every $i\in[\ell]$, set
$C_i:=R_i\cap T$, $S_i:=R_i\setminus T$, and let
$D:=T\setminus\bigcup_{i\in[\ell]}C_i$. Then
$D':=D\cup\bigcup_{i\in[\ell]}C_i=T$, and $S_i\cup C_i=R_i$ for every $i$.
Since the sets $R_1,\dots,R_\ell$ are pairwise disjoint and anticomplete,
$(D,\{(S_i,C_i):i\in[\ell]\})$ is a clean-set-packing of length $\ell$.

It is $\tau$-thick because, for every $i$, the graph induced by
$S_i\cup C_i=R_i$ in $G\sm(D'\setminus C_i)$ has at most $\tau$ connected
components. Also, its cost is $|D'|=|T|\leq c_{A,B}$. Finally, since
$(G\sm T)\sm R_i=(G\sm D')\sm S_i\in\mathcal P_{A,B}$, the set $S_i$ is an
$A'\!$--$B'$ separator in $G\sm D'$, where
$A':=A\cap V(G\sm D')$ and $B':=B\cap V(G\sm D')$. Hence this is an
\abpack.
\end{proof}

\begin{remark}
    For positive integers $t,q,\ell,n,\omega$, with $n\geq 2$, we use $\gthick(q,t,n)$ 
    and $\ccost(t,q,\ell,n,\omega)$ to denote the functions $\tau$
    and $c_{A,B}$ defined in \Cref{lem:AB_packing_new}. We also use $\mucostab(t)$ to refer to the constant implied by \Cref{lem:AB_packing_new} and when $t$ is clear from context, we simply write $\mucostab$.
\end{remark}

\section{Graphs in $\calc_t$ admit Strong Barriers.}\label{sec:barrier}

In this section we show that every graph $G\in\calc_t$, together with an \abpack $\Pi$, admits a strong-barrier with the additional properties stated in the main theorem of this section, \cref{thm:strong_barrier}. 
This combinatorial object is central to the arguments of Section~\ref{sec:absep}.
Let $G$ be a graph, $A,B\subseteq V(G)$, and let $\Pi = \{D,\{(S_i,C_i):i\in[\ell]\}\}$ be an \abpack in $G$.
Set $D':=D\cup\bigcup_{i=1}^\ell C_i$, $G':=G-D'$, $A':=A\cap V(G')$, and $B':=B\cap V(G')$.
%
%
We make the following definitions, all relative to a partition $(L,Z,R)$ of $V(G')$ and
the \abpack $\Pi$.
\begin{itemize}[label = --, leftmargin = 2em]
    \item $(L,Z,R)$ is a \defn{strong-barrier} corresponding to $\Pi$ in $G$, if $A'\subseteq L$, $B'\subseteq R$, and every $L$--$R$ path $P$ in $G'$ has      a subpath that is again an $L$--$R$ path, with the additional property that its interior is contained in $Z$.
    \item For $i\in[\ell]$, the \defn{fragments} of $S_i\cup C_i$, written $\calf_i$, are the connected components of $G[S_i\cup C_i]$. 
        Furthermore, $\calf := \bigcup_{i=1}^\ell\calf_i$ is the set of \defn{fragments} of $\Pi$. 
        Since the sets $S_i\cup C_i$ for $i\in[\ell]$ pairwise do not touch, $\calf$ is exactly the set of connected components of $G\bigl[\bigcup_{i=1}^\ell(S_i\cup C_i)\bigr]$.
    \item For a positive integer $\gamma$, a strong-barrier is \defn{$\gamma$-broad} if every $L$--$R$ path in $G'$ intersects at least $\gamma$ distinct       fragments of $\calf$.
    \item The \defn{active components} of a strong-barrier, written $\cala$, are the connected components of $G'[Z]$ with a neighbor in both $L$ and $R$.
    \item $(L,Z,R)$ is \defn{$(\lambda_1,\lambda_2)$-light} if $\bigl|\bigcup_{C\in\cala}V(C)\bigr|\leq \lambda_1$ and $\bigl|\{F\in\calf : V(F)\cap            C\neq\emptyset,\, C\in\cala\}\bigr|\leq \lambda_2$.
\end{itemize}
%
We record the following theorem by Chudnovsky, Codsi, Fischer, and Lokshtanov~\cite{subpolytw}, after which we state the main theorem of this section. Recall that two vertices $u,v$ of a graph are called \defn{false-twins} if $N(u) = N(v)$.

\begin{proposition} \label{prop:bipartite_lemma}
    Corresponding to every positive integer $t$, there exists a positive integer $g = g(t)$ such that the following holds. Let $G$ be a bipartite $K_{t,t}$-induced-minor-free graph with bipartition $(A,B)$ such that no two vertices in $B$ are false-twins. Then $|E(G)| \leq g |A|$.
\end{proposition}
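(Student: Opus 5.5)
The plan is to reduce the statement to a known sparsity theorem for $K_{t,t}$-induced-minor-free graphs, and then use the no-false-twins hypothesis to bound $|B|$ linearly in $|A|$.

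\textbf{Step 1: bounded average degree.} Since $G$ is bipartite with bipartition $(A,B)$, any subgraph of $G$ isomorphic to $K_{t,t}$ is connected. Its sides therefore lie in $A$ and in $B$ respectively, so it is automatically an induced subgraph. Hence $G$ contains neither $K_{t,t}$ as a subgraph nor as an induced minor. I would invoke the theorem of Bourneuf, Buci\'c, Cook and Davies~\cite{bourneuf2024polynomial}, which gives a constant $d=d(t)$ such that every such graph, and every subgraph of it, has at most $d$ times as many edges as vertices. Thus
\[
|E(G)|\le d(|A|+|B|),
\]
and it remains to prove $|B|\le c(t)|A|$.

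\textbf{Step 2: split $B$ by degree.} Let $B_{\mathrm{high}}$ be the set of vertices of $B$ with degree greater than $2d$. Applying Step 1 to $G[A\cup B_{\mathrm{high}}]$ gives
\[
2d|B_{\mathrm{high}}|<|E(G[A\cup B_{\mathrm{high}}])|\le d(|A|+|B_{\mathrm{high}}|),
\]
so $|B_{\mathrm{high}}|<|A|$. For low degrees, the no-false-twins hypothesis says that the vertices of $B$ have pairwise distinct neighbourhoods. Hence at most one vertex of $B$ has degree $0$, and at most $|A|$ have degree $1$. What remains is, for each fixed $k\in\{2,\dots,2d\}$, the $k$-uniform hypergraph $\mathcal H_k$ on $A$ whose edges are the neighbourhoods $N(b)$ of the degree-$k$ vertices $b\in B$. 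These edges are distinct, and I must show $|E(\mathcal H_k)|=O_t(|A|)$.

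\textbf{Step 3: the main obstacle.} Bounding $\mathcal H_k$ is the step I expect to be hardest. For $k=2$, $\mathcal H_2$ is a simple graph on $A$ and $G$ contains its $1$-subdivision as an induced subgraph. I would then show that a simple graph whose $1$-subdivision is $K_{t,t}$-induced-minor-free has bounded average degree. The route would be: contract the subdivision vertices to get $\mathcal H_2$ as an induced minor of $G$; use Ramsey to find either a large clique in $\mathcal H_2$, or a large $K_{s,s}$-subgraph-free piece to which Step 1 applies. A large clique is impossible, because the subdivision of $K_{2t}$ already contains a $K_{t,t}$ induced minor, obtained by contracting each edge $a_ia_j$ between the two halves appropriately.

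For general $k$, I would reduce to the case $k=2$ by induction on $k$. For each $b$, fix two neighbours $a,a'$, and contract $b$ together with its remaining $k-2$ neighbours' incidences into a smaller-degree configuration. Alternatively, apply a sunflower argument: a large sunflower with core $K$ and petals of size at least $1$ would give many $B$-vertices that agree on $K$. Deleting $K$ and recursing on the petals should either produce a $K_{t,t}$ subgraph, which Step 1 forbids, or reduce to lower uniformity.

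Summing the three bounds, over $B_{\mathrm{high}}$, over degrees $\le 1$, and over the hypergraphs $\mathcal H_k$ for $2\le k\le 2d$, gives $|B|=O_t(|A|)$. Combined with Step 1 this yields $|E(G)|\le g|A|$.
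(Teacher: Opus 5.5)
The paper does not prove this proposition itself; it imports it from \cite{subpolytw}, so your argument has to stand on its own. Steps~1 and~2 are sound, though Step~1 should say ``every induced subgraph'' rather than ``every subgraph''; only induced subgraphs are used. They correctly reduce the problem to one claim: for each fixed $k\in\{2,\dots,2d\}$, the distinct $k$-sets $N(b)$ with $\deg(b)=k$ number $O_t(|A|)$. But given Step~1, this reduced claim \emph{is} the proposition, and you only establish it for $k=2$. For $k=2$ there is a simpler route: $\mathcal H_2$ is an induced minor of $G$ with $\omega(\mathcal H_2)<2t$, so \cite{bourneuf2024polynomial} applies directly.

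For $k\ge 3$, none of your sketches works.
\begin{itemize}
\item Fixing a pair $\{a,a'\}\subseteq N(b)$ throws away injectivity, which is your only hypothesis. For $t\ge 3$, take $A=\{a,a'\}\cup C$ and $N(b_c)=\{a,a',c\}$ for $c\in C$. This graph has no $K_{t,t}$ induced minor: at most two branch sets meet $\{a,a'\}$, and the others would have to live in a perfect matching. Yet every $b_c$ yields the same pair.
\item Contracting $b$ into a neighbour produces a vertex adjacent to both $A$- and $B$-vertices. So no bipartite instance with distinct $(k-1)$-neighbourhoods is left to recurse on.
\item The sunflower lemma gives a $p$-petal sunflower as soon as $|\mathcal H_k|>k!(p-1)^k$, a threshold independent of $|A|$. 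So sunflowers occur in perfectly legitimate instances; the example above is one sunflower with $|C|$ petals. A single sunflower trivially has at most $|A|$ members, and nothing in the sketch turns local sunflowers into a global charging of all of $\mathcal H_k$ to $A$.
\end{itemize}

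The missing ingredient is a sparsity statement stronger than bounded average degree of induced subgraphs. A bipartite $K_{t,t}$-induced-minor-free graph has no $K_{t,t}$ subgraph and no induced subdivision of $K_{t,t}$. By Dvo\v{r}\'ak's theorem on induced subdivisions, such graphs therefore lie in a class of bounded expansion. The linear neighbourhood complexity of bounded-expansion classes (Reidl, S\'anchez Villaamil and Stavropoulos) then gives $|B|\le c(t)|A|$ at once, since the $N(b)$ are distinct subsets of $A$. Step~1 then finishes the proof.

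If you want to keep your degree split, bounded expansion also closes Step~3 elementarily.
\begin{itemize}
\item Let $\Gamma_k$ be the graph on $A$ joining two vertices that lie in a common $N(b)$ with $\deg(b)=k$.
\item For a subgraph $H$ of $\Gamma_k$, fix a witness $b$ for each edge, and contract each witness into a uniformly random neighbour in $V(H)$. This is a depth-$1$ minor of $G$ that keeps each edge of $H$ with probability at least $2/k$.
\item Hence $|E(H)|\le\frac{k}{2}\nabla_1|V(H)|$, where $\nabla_1$ bounds $|E|/|V|$ over depth-$1$ minors.
\item So $\Gamma_k$ is $D$-degenerate with $D=\lfloor k\nabla_1\rfloor$, and has at most $\binom{D}{k-1}|A|$ cliques of size $k$. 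The sets $N(b)$ are distinct such cliques.
\end{itemize}
The contracted graph is an induced minor of $G$, but without bounded expansion you would still need to bound its clique number before \cite{bourneuf2024polynomial} applies. So Step~1 alone does not finish this.
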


\begin{remark}
    For a positive integer $t$, we use $\gcost(t)$ to denote the constant implied by \Cref{prop:bipartite_lemma}.
\end{remark}

\begin{restatable}{theorem}{strongbarrier}\label{thm:strong_barrier}
    Let $t$ be a positive integer and let $G\in\calc_t$ be a graph on $n$ vertices. 
    Let $k,\gamma,\ell$ be positive integers with $\gamma\geq\tsquig$ and $\ell\geq 8\gamma$, let $A,B\subseteq V(G)$, and let $\Pi$ be a $k$-thick \abpack of length $\ell$ in $G$. 
    Set,
    \[
        \lambda_1(\gamma,n,\ell)\ :=\ 16\gamma n/\ell
        \qquad\text{and}\qquad
        \lambda_2(\gamma,t,k)\ :=\ 16\,\gamma\, \tsquig\,\gcost(\tsquig)\, k.
    \]
    Then there exists a strong-barrier corresponding to $\Pi$ in $G$, that is $(\lambda_1(\gamma,n,\ell),\,\lambda_2(\gamma,t,k))$-light and
    $\gamma$-broad.
\end{restatable}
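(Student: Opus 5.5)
We would build the strong-barrier from a ``fragment-distance'' layering of $G'=G-D'$ and then pick a window of layers by averaging.

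\textbf{Step 1: the layering.} For $v\in V(G')$, let $d(v)$ be the minimum, over all paths $P$ in $G'$ from $A'$ to $v$, of the number of distinct fragments of $\calf$ that $P$ meets (with $d(v)=\infty$ if no such path exists). Then $d(v)=0$ on $A'\setminus\bigcup_i S_i$. Along an edge $uv$ of $G'$ we have $d(v)\le d(u)+1$, since extending a path by one vertex adds at most one new fragment. Moreover, $d$ can only increase when the path enters a vertex of $\bigcup_i S_i$. Because each $S_i$ is an $A'$--$B'$ separator in $G'$ and the sets $S_i$ are pairwise disjoint, every $A'$--$B'$ path meets at least $\ell$ distinct fragments. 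Hence $d\ge \ell$ on $B'$ (the endpoint itself is counted).

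\textbf{Step 2: choosing the window.} For an integer $j$, put
\[
L_j=\{v:d(v)<j\},\quad Z_j=\{v:j\le d(v)<j+\gamma\},\quad R_j=\{v:d(v)\ge j+\gamma\}.
\]
For $1\le j$ and $j+\gamma\le \ell$ we have $A'\subseteq L_j$ and $B'\subseteq R_j$.

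Since $d$ increases by at most one per step, $L_j$ and $R_j$ are anticomplete. So every $L$--$R$ path has a subpath from its last $L$-vertex to the next $R$-vertex, whose interior lies in $Z_j$; this gives the strong-barrier property. Along such a subpath, $d$ rises from below $j$ to at least $j+\gamma$, and each rise occurs when the path enters a fragment it has not visited before. This yields $\gamma$-broadness. The point needing a careful check is that the rises really are caused by distinct fragments along the subpath; I would make this rigorous via a triangle inequality for $d$ in which the count of fragments is measured along the subpath.

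The windows $[j,j+\gamma)$ for $j=1,1+\gamma,1+2\gamma,\dots$ are disjoint, and at least $\ell/(2\gamma)\ge 4$ of them fit below $\ell$. Their vertex sets $Z_j$ are disjoint. Also, each fragment is connected in $G$, so one expects it to meet only $O(1)$ consecutive windows. Using $\ell\ge 8\gamma$, averaging lets us choose $j$ with $|Z_j|\le 4\gamma n/\ell$ (which gives the $\lambda_1$ bound, with room for the factor $16$) and simultaneously with few fragments meeting $Z_j$, as follows. The total number of fragments is at most $k\ell$ by $k$-thickness, so averaging over the $\ell/(2\gamma)$ windows bounds the number of fragments meeting the chosen $Z_j$ by $O(\gamma k)$ times the number of windows a single fragment can meet. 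A Markov-type argument then picks a single $j$ satisfying both bounds.

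\textbf{Step 3 (main obstacle): bounding the fragments touching active components.} The hardest point is to justify that a single fragment meets only boundedly many windows, or more generally to bound the number of fragments meeting active components by $\lambda_2$. A fragment $F$ is connected in $G$, but $F\cap V(G')$ may be disconnected in $G'$, since $F$ can pass through $C_i\subseteq D'$. So $d$ need not be nearly constant on $F$.

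To get around this, I would consider the bipartite incidence graph between active components (each connected, touching both $L$ and $R$, and hence crossing $\gamma$ fragments) and the fragments they meet. I would then merge false twins on the fragment side and apply \cref{prop:bipartite_lemma} with parameter $\tsquig$, which makes the number of edges linear in the number of active components.

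Any large configuration of fragments each meeting many active components, where the fragments are pairwise non-touching (as fragments of different $S_i$ are) and the components are pairwise non-touching (as distinct components of $G'[Z]$ are), yields a \skt{\tsquig}. This contradicts \cref{lem:squigglyktt}, and that is where the factor $\tsquig\,\gcost(\tsquig)$ enters. Combined with the $O(\gamma k)$ bound from averaging, this should give $\lambda_2=16\gamma\tsquig\gcost(\tsquig)k$. Getting the bipartite graph to be a genuine induced minor of a $K_{\tsquig,\tsquig}$-free object, so that \cref{prop:bipartite_lemma} applies, is the step I expect to require the most care.
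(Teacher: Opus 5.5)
Your Steps 1--2 are essentially the paper's construction: a $d$-layering of $G'$ counting the separators or fragments met, barriers cut out of $d$-windows, $\gamma$-broadness via the triangle inequality for $d$, and $\lambda_1$ by averaging. Step~3, however, does not go through as proposed, and that is where the content of the theorem lies. You merge false twins on the \emph{fragment} side. Then \cref{prop:bipartite_lemma} bounds the number of incidences linearly in the number of active components, and nothing in your argument controls that number. Worse, false-twin classes among fragments are unbounded: fragments meeting no active component, or meeting only one common active component, are mutual false twins. So you cannot even pass from the merged graph back to the original one.

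The orientation has to be reversed. The fragments must be the side $A$ whose size bounds the edge count, with $|\calf|\le k\ell$ by $k$-thickness, and twins are merged among the active components. Those twin classes have size less than $\tsquig$ precisely because of broadness. Every active component contains the interior of an $L$--$R$ path, so it meets at least $\gamma\ge\tsquig$ fragments. A twin class of size $\tsquig$ together with $\tsquig$ common fragment-neighbours would therefore induce $K_{\tsquig,\tsquig}$. This gives $|E(H)|<\tsquig\,\gcost(\tsquig)\,k\ell$ for the touching graph $H$ on active components and fragments, and averaging this edge count over windows gives $\lambda_2$ directly. You never need to bound how many windows a single fragment meets, nor how many fragments meet all of $Z_j$. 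Only fragments touching \emph{active} components are controlled, since their degree-at-least-$\gamma$ property is what bounds the twin classes, and that is all the theorem asks for.

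For this to work, $H$ must be built over all candidate windows at once; a single-window graph gives nothing beyond the trivial bound $|\calf|\le k\ell$. Active components in different windows must then be pairwise non-touching. This is needed both for $H$ to be bipartite and for an induced $K_{\tsquig,\tsquig}$-minor model in $H$ to become a \skt{\tsquig} in $G$, contradicting \cref{lem:squigglyktt}. With your contiguous windows $[j,j+\gamma)$, components in consecutive windows can be adjacent. Simply deleting those edges from $H$ breaks the translation back to $G$, because non-adjacent branch sets in $H$ could then touch in $G$.

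The paper instead uses every other window, $\call_i=\calb_{2i\gamma}\setminus\calb_{(2i-1)\gamma}$ for $i\le\ell'=\lfloor\ell/(4\gamma)\rfloor$, so distinct layers are separated by a gap of $d$-width $\gamma$. Since $\ell/\ell'\le 8\gamma$, at least $\ell'/2$ layers have active components of total size at most $2n/\ell'\le\lambda_1$. Among these, some layer has at most $2\tsquig\,\gcost(\tsquig)\,k\ell/\ell'\le 16\gamma\,\tsquig\,\gcost(\tsquig)\,k=\lambda_2$ fragments meeting its active components.

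There is also a small off-by-one: $A'\subseteq L_1$ fails when $A'$ meets some $S_i$, since $d=1$ there, so the windows should start at $j\ge2$.
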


In order to properly sketch a proof of this theorem, we need to fix some objects and make some definitions.
For the rest of this section let us fix a positive integer $t$ and a graph $G\in\calc_t$. 
Fix positive integers $k,\gamma,\ell$ with $\gamma\geq \tsquig$ and $\ell\geq 8\gamma$.
Let $\lambda_1 := \lambda_1(\gamma,n,\ell)$ and $\lambda_2 := \lambda_2(\gamma,t,k)$.
Also fix a pair of vertex subsets $A,B\subseteq V(G)$, and a $k$-thick \abpack $\Pi = (D,\set{(S_i,C_i):i\in[\ell]})$ of length $\ell$ in $G$. 
As before, let $D':=D\cup\bigcup_{i=1}^\ell C_i$, $G':=G-D'$, $A' := A\cap V(G')$, and $B' := B\cap V(G')$.
%
%
Now, for $v\in V(G')$, we define,
$$
d(v)\ \ :=\ \ \min\Bigl\{ \ell,\ \min\left\{\bigl|\set{i\in[\ell] : V(P)\cap S_i\neq\emptyset}\bigr| : P \text{ is an $A'$--$v$ path in } G'\right\}\Bigr\}.
$$ 
Let $r$ be a positive integer. 
We denote using $\calb_r$ the set $\set{v\in V(G') : d(v)\leq r}$, and write $\bcalb_r$ for $V(G')\setminus\calb_r$.
Define $\ell':= \floor{\frac{\ell}{4\gamma}}$ and for future purposes, observe that $\ell \geq 8\gamma$ gives,
$
\ell/\ell'
=\ell/\lfloor\frac{\ell}{4\gamma}\rfloor 
\leq  \ell/(\frac{\ell}{4\gamma}-1)
\leq  \ell/(\frac{\ell}{8\gamma})
=  8\gamma 
$
%
For $i\in[\ell']$, we define the \defn{$i$-th layer} to be $\call_i:=\calb_{2i\gamma}\setminus\calb_{(2i-1)\gamma}$.

\medskip

We make the following simple observations regarding our definitions:
Since each $S_i$ is an $A'$--$B'$ separator in $G'$, we have $d(v)=\ell$ for every $v\in B'$.
Also, since the sets $S_i$ are pairwise disjoint, every $v\in V(G')$ lies in at most one $S_i$. 
Hence $d(v)\leq 1$ for every $v\in A'$ and $d(v)\leq d(u)+1$ for every edge $uv$ of $G'$.
Finally, the layers $\call_i$ for $i\in[\ell']$ pairwise do not touch in $G'$: for $i<j$, every $u\in\call_i$ and $v\in\call_j$ satisfy $d(u)+\gamma \leq 2i\gamma +\gamma \leq (2j-1)\gamma < d(v)$.
So $d(v)>d(u)+1$, and hence $u,v$ must be distinct and non-adjacent. 
With these definitions in hand, we can outline the proof of \Cref{thm:strong_barrier}

\medskip
\noindent \textbf{Proof Overview:}  
We show in \Cref{lem:is_a_strong_barrier} that for every $i\in[\ell']$, the triple $(\calb_{(2i-1)\gamma},,\call_i,,\bcalb_{2i\gamma})$ is a $\gamma$-broad strong-barrier, so it suffices to find some $i\in[\ell']$ for which this barrier is $(\lambda_1,\lambda_2)$-light.
%
For the $\lambda_1$ part, a simple averaging argument shows that at least half of the layers $\call_i$ have active components spanning at most $\lambda_1$ vertices in total, since the layers are pairwise disjoint and together contain only $n$ vertices. 
For the $\lambda_2$ part, we build an auxiliary graph $H$ whose vertices correspond to the fragments of $\Pi$ and the active components of the $\lambda_1$-light layers, with an edge between two vertices exactly when their corresponding sets touch in $G$.
We show that $H$ is $K_{\tsquig,\tsquig}$-induced-minor-free, arguing that an induced $K_{\tsquig,\tsquig}$-minor model in $H$ would yield a \skt{\tsquig} in $G$, contradicting \Cref{lem:squigglyktt}. 
Applying \Cref{prop:bipartite_lemma} to $H$ then bounds its number of edges, and a further averaging argument over the $\lambda_1$-light layers produces one layer whose active components have low total degree in $H$, i.e. one that is also $\lambda_2$-light.
One caveat is that Proposition~\ref{prop:bipartite_lemma} also requires the active components to be free of false-twins, which need not hold in $H$ — we get around this using $\gamma$-broadness to show every false-twin class among them has size less than $\tsquig$, so that contracting each class to a representative loses only a bounded factor.

\medskip
Now we move on to the formal statements and proofs.

\begin{lemma}\label{lem:is_a_strong_barrier}
    For every $i\in[\ell']$, the triple $(\calb_{(2i-1)\gamma},\,\call_i,\,\bcalb_{2i\gamma})$ is a $\gamma$-broad strong-barrier.
\end{lemma}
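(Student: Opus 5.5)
Fix $i\in[\ell']$ and write $L:=\calb_{(2i-1)\gamma}$, $Z:=\call_i$, $R:=\bcalb_{2i\gamma}$. The plan is to verify the three parts of the definition in turn: the triple is a partition of $V(G')$ with $A'\subseteq L$ and $B'\subseteq R$; every $L$--$R$ path contains an $L$--$R$ subpath with interior in $Z$; and every $L$--$R$ path meets at least $\gamma$ fragments.

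For the partition, note that $\calb_{(2i-1)\gamma}\subseteq\calb_{2i\gamma}$, so $L$, $Z=\calb_{2i\gamma}\setminus\calb_{(2i-1)\gamma}$ and $R=V(G')\setminus\calb_{2i\gamma}$ partition $V(G')$. By the observations above, $d(v)\le 1\le(2i-1)\gamma$ for $v\in A'$, so $A'\subseteq L$. Also $d(v)=\ell$ for $v\in B'$, and $\ell\ge 4\gamma\ell'\ge 4\gamma i>2i\gamma$, so $B'\subseteq R$.

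For the strong-barrier property, take an $L$--$R$ path $P=p_0\dots p_m$ with $p_0\in L$ and $p_m\in R$. Let $j$ be the last index with $p_j\in L$, and let $j'>j$ be the first index after $j$ with $p_{j'}\in R$. Then $p_j\dots p_{j'}$ is an $L$--$R$ subpath whose interior avoids both $L$ and $R$, so the interior lies in $Z$.

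For $\gamma$-broadness, it suffices to treat a path $Q$ from $u\in L$ to $v\in R$, since any $L$--$R$ path contains such a subpath. Choose an $A'$--$u$ path $P_u$ in $G'$ that meets exactly $d(u)$ of the sets $S_j$; this needs $d(u)<\ell$, which holds because $d(u)\le(2i-1)\gamma<\ell$. The concatenation $P_u\cup Q$ is a walk from $A'$ to $v$ and contains an $A'$--$v$ path meeting only sets $S_j$ met by $P_u$ or by $Q$. Hence
\[
2i\gamma<d(v)\le d(u)+\bigl|\set{j: V(Q)\cap S_j\neq\emptyset}\bigr|,
\]
so $Q$ meets more than $\gamma$ distinct sets $S_j$. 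The step requiring the most care is turning this into $\gamma$ distinct fragments. Each $S_j\subseteq S_j\cup C_j$ meets $Q$ in a vertex that lies in some fragment of $\calf_j$. Fragments in $\calf_j$ and $\calf_{j'}$ are distinct for $j\neq j'$, because the sets $S_j\cup C_j$ are pairwise disjoint. So $Q$, and therefore the original path, intersects at least $\gamma$ distinct fragments of $\calf$.
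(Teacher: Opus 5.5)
Your proof is correct and takes essentially the same route as the paper. The strong-barrier property comes from the same argument: take the last vertex in $L$ and the first later vertex in $R$. Broadness comes from concatenating a $d(u)$-witnessing $A'$--$u$ path with the given path, which forces more than $2i\gamma-(2i-1)\gamma=\gamma$ distinct sets $S_j$; these give distinct fragments because the sets $S_j\cup C_j$ are pairwise disjoint. You are also slightly more careful than the paper in checking that the triple partitions $V(G')$ and that a witnessing path for $d(u)$ exists, since $d(u)<\ell$.
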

\begin{proof}
    Fix $i\in[\ell']$. 
    We have $A'\subseteq\calb_1\subseteq\calb_{(2i-1)\gamma}$, since $d(v)\leq 1$ for every $v\in A'$ and $(2i-1)\gamma\geq 1$.
    Similarly $B'\subseteq \bcalb_{2\ell'\gamma}\subseteq \bcalb_{2i\gamma}$, since $d(v)=\ell$ for every $v\in B'$ and $2i\gamma < \ell$.
    For the second condition, let $P$ be a $\calb_{(2i-1)\gamma}$--$\bcalb_{2i\gamma}$ path in $G'$.
    Define $u$ to be the last vertex of $P$ that lies in $V(P)\cap \calb_{(2i-1)\gamma}$ and let $v$ be the first vertex of $P$ after $u$ that lies in $V(P)\cap \bcalb_{2i\gamma})$. 
    Observe that both are well defined since the first and last vertex of $P$ lies in $\calb_{(2i-1)\gamma}$ and $\bcalb_{2i\gamma}$ respectively.
    Let $P'$ be the $u$--$v$ subpath of $P$, and observe that the interior of $P'$ is contained in $\call_i$.
    Hence $(\calb_{(2i-1)\gamma},\,\call_i,\,\bcalb_{2i\gamma})$ is strong-barrier.
    
    Now to show that it is $\gamma$-broad, let $P$ be a $u$--$v$ path with $u\in \calb_{(2i-1)\gamma}$ and $v\in \bcalb_{2i\gamma}$.
    Let $P''$ be an $A'$--$u$ path witnessing $d(u)$, and let $Q$ be an $A'$--$v$ path within the $A'$--$v$ walk obtained by concatenating $P''$ and $P$.
    Observe that $Q$ intersects more than $2i\gamma$ distinct sets $S_j$ as $d(v) > 2i\gamma$, while $V(P'')$ intersects at most $(2i-1)\gamma$ as $d(u)\leq(2i-1)\gamma$, and $v$ lies in at most one.
    Thus the interior of $P$ intersects at least $\gamma$ distinct sets $S_j$. 
    Since the sets $S_j\cup C_j$ for $j\in[\ell]$ pairwise do not touch in $G$, each fragment lies in a single $S_j\cup C_j$.
    Consequently, we conclude that $P$ intersects at least $\gamma$ distinct fragments.
\end{proof}

For each $i\in[\ell']$, let $\cala_i$ denote the set of active components of $(\calb_{(2i-1)\gamma},\,\call_i,\,\bcalb_{2i\gamma})$, which is a strong barrier by \Cref{lem:is_a_strong_barrier}. Furthermore, for $I\subseteq [\ell']$, let $\cala_I$ denote $\bigcup_{i\in I}\cala_i$.

\begin{lemma}\label{lem:light_layer_exists}
    There exists $i\in[\ell']$ such that $(\calb_{(2i-1)\gamma},\,\call_i,\,\bcalb_{2i\gamma})$ is $(\lambda_1, \lambda_2)$-light.
\end{lemma}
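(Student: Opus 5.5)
We follow the outline given in the proof overview: we first select the layers that satisfy the $\lambda_1$ condition, and then average a degree count in an auxiliary graph over those layers.

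\emph{Step 1 ($\lambda_1$-light layers).} The layers $\call_i$, $i\in[\ell']$, are pairwise disjoint subsets of $V(G')$, and every active component of layer $i$ lies inside $\call_i$. So $\sum_i |\bigcup_{C\in\cala_i}V(C)|\le n$. By Markov's inequality, the set $I\subseteq[\ell']$ of indices with $|\bigcup_{C\in\cala_i}V(C)|\le 2n/\ell'$ satisfies $|I|\ge \ell'/2$. Since $\ell/\ell'\le 8\gamma$, we have $2n/\ell'\le 16\gamma n/\ell=\lambda_1$. Hence every $i\in I$ is $\lambda_1$-light.

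\emph{Step 2 (auxiliary bipartite graph).} Let $H$ be the bipartite graph with one side $\calf$ (the fragments) and the other side $\cala_I$. Join $F\in\calf$ to $C\in\cala_I$ when $V(F)\cap V(C)\neq\emptyset$. For the argument we may enlarge this relation to ``touch in $G$''; an intersecting pair certainly touches. We claim that $H$ has no $K_{\tsquig,\tsquig}$ induced minor. Suppose it had one, with branch sets $X_1,\dots,X_{\tsquig}$ and $Y_1,\dots,Y_{\tsquig}$, each connected in $H$. Take the union in $G$ of the vertex sets of the components and fragments in each branch set. Each union is connected in $G$, because adjacent vertices of $H$ correspond to sets that touch. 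Distinct branch sets on the same side are non-adjacent in $H$. However, they could still touch in $G$ through two fragments touching each other, or two active components touching each other. Fragments pairwise do not touch, since they are distinct components of $G[\bigcup_i(S_i\cup C_i)]$. Active components from distinct layers do not touch, since the layers do not touch. Active components from the same layer are distinct components of $G'[\call_i]$, so they do not touch either. Hence the only possible touchings are fragment--component pairs, which are exactly the edges of $H$. Therefore the unions form a \skt{\tsquig} in $G$, contradicting \Cref{lem:squigglyktt}.

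\emph{Step 3 (false twins and counting).} We need to handle false twins among $\cala_I$ before applying \Cref{prop:bipartite_lemma}. We show that each false-twin class on the $\cala_I$ side has fewer than $\tsquig$ members, or else consists of components of degree $0$. First, every active component $C$ meets at least $\gamma\ge\tsquig$ fragments: $C$ has neighbours $u\in L$ and $v\in R$, so there is an $L$--$R$ path with interior in $C$. By $\gamma$-broadness this path meets at least $\gamma$ fragments, and fragments meeting the interior meet $C$. (If the path meets a fragment only at its endpoints, we replace the counting by the interior argument of \Cref{lem:is_a_strong_barrier}, which already shows that the interior meets at least $\gamma$ distinct $S_j$.) Now, if $\tsquig$ components shared the same neighbourhood $N$ with $|N|\ge\tsquig$, then these components together with $\tsquig$ fragments of $N$ would form a \skt{\tsquig}. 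Keep one representative per class, giving $H'$. Then \Cref{prop:bipartite_lemma} applied to $H'$ gives $|E(H')|\le\gcost(\tsquig)|\calf|$, and so $|E(H)|\le\tsquig\,\gcost(\tsquig)|\calf|$. By $k$-thickness, $|\calf|\le k\ell$. Averaging over the $|I|\ge\ell'/2$ layers yields some $i\in I$ for which the number of fragments meeting $\bigcup\cala_i$ is at most
\[
\frac{2\,\tsquig\,\gcost(\tsquig)\,k\ell}{\ell'}\;\le\;16\gamma\,\tsquig\,\gcost(\tsquig)\,k=\lambda_2 .
\]
This $i$ is the required layer.

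The main obstacle is Step 3, namely making the false-twin bound rigorous. In particular, we must ensure that every active component genuinely intersects (not merely touches) at least $\tsquig$ fragments, so that a large twin class actually yields a \skt{\tsquig}. I would first try to establish this using the interior-path argument from the proof of \Cref{lem:is_a_strong_barrier}.
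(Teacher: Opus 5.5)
Your proposal is correct and follows essentially the same route as the paper. The shared steps are: averaging to get at least $\ell'/2$ layers that are $\lambda_1$-light; the touch-graph $H$ on $\cala_I\cup\calf$, shown to be $K_{\tsquig,\tsquig}$-induced-minor-free via \Cref{lem:squigglyktt}; collapsing false-twin classes before applying \Cref{prop:bipartite_lemma}; and a final averaging using $|\calf|\le k\ell$ and $\ell/\ell'\le 8\gamma$.

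The obstacle you flag is settled exactly as you suggest, which is also what the paper does implicitly. The argument in \Cref{lem:is_a_strong_barrier} shows that the interior of any $L$--$R$ path meets at least $\gamma$ distinct $S_j$. For an active component $C$ that interior lies inside $C$, so $C$ intersects at least $\gamma\ge\tsquig$ fragments.
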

\begin{proof}
    Let $I\subseteq[\ell']$ denote the set of $i\in[\ell']$ such that $\call_i$ satisfies $\bigl|\bigcup_{C\in\cala_i}V(C)\bigr|\leq \lambda_1$. 
    We first show that $|I|\geq\ell'/2$. 
    Suppose for contradiction that $|[\ell']\setminus I|>\ell'/2$. 
    The layers $\call_i$, $i\in[\ell']$, are pairwise disjoint subsets of $V(G')$ with $\bigcup_{C\in\cala_i}V(C)\subseteq\call_i$ for every $i\in[\ell']$.
    Hence,
    \[
        \sum_{i\in[\ell']}|\call_i| \quad
        \geq\quad \sum_{i\in[\ell']\setminus I} \sum_{C\in\cala_i}|V(C)| \quad
        >\quad |[\ell']\setminus I|\cdot\lambda_1 \quad
        \geq \quad |[\ell']\setminus I|\cdot\frac{2n}{\ell'} \quad
        >\quad \frac{\ell'}{2}\cdot\frac{2n}{\ell'} \quad
        =\quad n.
    \]

    Define $\cala_I := \bigcup_{i\in I}\cala_i$.
    Construct an auxiliary graph $H$ with vertex set $\cala_I\cup\calf$, joining two vertices of $H$ by an edge whenever the vertex subsets of $G$ they represent touch in $G$.

    \begin{claim}\label{clm:H_is_Ktt_free}
        $H$ does not contain $K_{\tsquig,\tsquig}$ as an induced minor.
    \end{claim}
    \begin{claimproof}
        Suppose for contradiction that $H$ contains $K_{\tsquig,\tsquig}$ as an induced minor, with sets $\{X_1,\dots,X_{\tsquig}\},\{Y_1,\dots,Y_{\tsquig}\}\subseteq\cala_I\cup\calf$ forming the induced minor model. 
        For $Z\subseteq\cala_I\cup\calf$, write $V_G(Z):=\bigcup_{v\in Z}V(v)\subseteq V(G)$, where $V(v)$ denotes the vertex subset of $G$ corresponding to $v$. 
        We show that $\cals:=\{V_G(X_1),\dots,V_G(X_{\tsquig}),V_G(Y_1),\dots,V_G(Y_{\tsquig})\}$ is a \skt{\tsquig} in $G$.\\[-18pt]
        
        \begin{itemize}
            \item Every $v\in\cala_I\cup\calf$ corresponds to a connected vertex subset of $G$, and by definition of $E(H)$, if $u,v\in V(H)$ are adjacent then $V(u)$ and $V(v)$ touch and consequently $V(u)\cup V(v)$ is connected in $G$. 
            Since $H[X_i]$ and $H[Y_i]$ are connected for every $i\in[\tsquig]$, applying this observation along a spanning tree of $H[X_i]$ and $H[Y_i]$ shows that $G[V_G(X_i)]$ and $G[V_G(Y_i)]$ are connected as well.\\[-18pt]
            
            \item Next, fix distinct $i,j\in[\tsquig]$. 
            If $V_G(X_i)$ and $V_G(X_j)$ touch, then there exist $u\in X_i$ and $v\in X_j$ such that $V(u)$ and $V(v)$ touch.
            But then $uv\in E(H)$, which contradicts that $X_i$ is anticomplete to $X_j$ in $H$. 
            Hence $V_G(X_i)$ and $V_G(X_j)$ do not touch. 
            The same argument applied to $V_G(Y_i)$ and $V_G(Y_j)$ shows that they do not touch.\\[-18pt]

            \item Finally, fix $i,j\in[\tsquig]$ and observe that there must exist $u\in X_i$ and $v\in Y_j$ with $uv\in E(H)$.
            Thus $V(u)$ and $V(v)$ touch and consequently $V_G(X_i)$ and $V_G(Y_j)$ touch.\\[-15pt]
        \end{itemize}

        \noindent Thus $\cals$ is a \skt{\tsquig} in $G$, contradicting \Cref{lem:squigglyktt}.
    \end{claimproof}

    \noindent For this proof, we say that a vertex subset forms a \defn{twin class} if it is a maximal set of pairwise false-twins.

    \begin{claim}\label{clm:H_is_bipartite_with_bounded_twins}
        $H$ is a bipartite graph with $\cala_I$ and $\calf$ forming the two sides of the bipartition, and every twin class that contains a vertex of $\cala_I$ is contained in $\cala_I$ and has cardinality less than $\tsquig$.
    \end{claim}
    \begin{claimproof}
        The elements of $\calf$ are the connected components of $G\bigl[\bigcup_{i=1}^\ell(S_i\cup C_i)\bigr]$, hence pairwise do not touch in $G$, so $\calf$ is independent in $H$.
        Likewise, for $i\in I$ the elements of $\cala_i$ pairwise do not touch in $G'$ by definition, and for distinct $i,j\in[\ell']$ the elements of $\cala_i$ and $\cala_j$ lie in $\call_i$ and $\call_j$ respectively and hence do not touch in $G'$. 
        In either case, since $G'$ is an induced subgraph of $G$, the elements do not touch in $G$ either.
        So $\cala_I=\bigcup_{i\in I}\cala_i$ is also independent in $H$, and thus $H$ is bipartite with $\cala_I$ and $\calf$ forming the two sides of the bipartition. 
    
        Now, observe that each $C\in\cala_I$ is connected in $G'$ and has neighbors $u,v$ in $\calb_{(2i-1)\gamma}$ and $\bcalb_{2i\gamma}$ respectively. 
        Consequently there is a $\calb_{(2i-1)\gamma}$--$\bcalb_{2i\gamma}$ path in $G'$ whose interior is contained in $C$. 
        By \Cref{lem:is_a_strong_barrier}, the barrier $(\calb_{(2i-1)\gamma},\call_i,\bcalb_{2i\gamma})$ is $\gamma$-broad, so this path, and hence $C$, intersects at least $\gamma\geq\tsquig$ distinct elements of $\calf$. 
        Consequently as $\calf$ is independent in $H$, any twin class that contains a vertex of $\cala_I$ must be contained in $\cala_I$.
        Furthermore, since $\cala_I$ and $\calf$ are independent in $H$, if some twin class $T\subseteq\cala_I$ of $H$ had $|T|\geq \tsquig$, then $T$ together with $\tsquig$ common neighbors of $T$ in $\calf$, would induce a $K_{\tsquig,\tsquig}$ in $H$, contradicting \Cref{clm:H_is_Ktt_free}.
    \end{claimproof}
    
    \begin{claim}\label{clm:H_few_edges}
        $|E(H)| < \tsquig\gcost(\tsquig)|\calf|$.
    \end{claim}
    \begin{claimproof}
        For each twin class $T\subseteq \cala_I$ of $H$, fix a representative $C_T\in T$, and let $\cala_I^*:=\{C_T : T\subseteq \cala_I\text{ is a twin class of }H\}$.
        Let $H^*:=H[\cala_I^*\cup\calf]$. 
        As an induced subgraph of $H$, $H^*$ is also bipartite with $\cala_I^*,\calf$ forming the two parts of the bipartition.
        Furthermore, it contains no $K_{\tsquig,\tsquig}$ induced minor by \Cref{clm:H_is_Ktt_free}.
        Since by definition no two vertices of $\cala_I^*$ are false-twins, we may apply \Cref{prop:bipartite_lemma} to $H^*$ with $A:=\calf$ and $B:=\cala_I^*$ and conclude that $|E(H^*)|\leq \gcost(\tsquig)|\calf|$.
        Finally, since every vertex of $\cala_I$ has the same neighborhood in $H$ as the representative of its twin class and since every twin class has cardinality less than $\tsquig$ by \Cref{clm:H_is_bipartite_with_bounded_twins}, we have,
        $$
            |E(H)| 
            \ \ =\ \ \sum_{C\in\cala_I}\deg_H(C) 
            \ \ =\ \ \sum_{C_T \in \cala_I^\star}|T|\cdot\deg_H(C_T) 
            \ \ <\ \ \tsquig\sum_{T}\deg_H(C_T) 
            \ \ \leq\ \ \tsquig\gcost(\tsquig)|\calf|.
        $$
    \end{claimproof}

    \noindent Now, since $|I|\geq\ell'/2$, we conclude that there exists $i\in I\subseteq[\ell']\subseteq[\ell]$, such that:
    \begin{align*}
        \abs{\{F\in\calf : V(F)\cap C\neq\emptyset,\, C\in\cala_i\}}\qquad
        &\leq\qquad \frac{1}{|I|}\sum_{i\in I}\abs{\{F\in\calf : V(F)\cap C\neq\emptyset,\, C\in\cala_i\}} \\
        &\leq\qquad \frac{1}{|I|}\sum_{C\in\cala_I}\deg_H(C)
                &&\hspace{-105pt}=\quad \frac{|E(H)|}{|I|} \\
        &<\qquad \frac{\tsquig\gcost(\tsquig)|\calf|}{\ell'/2}
                &&\hspace{-105pt}\leq\quad \frac{2\tsquig\gcost(\tsquig)\ell k}{\ell'} \\
        &\leq\qquad 16\,\tsquig\,\gcost(\tsquig)\,\gamma k
                &&\hspace{-105pt}=\quad \lambda_2 .
    \end{align*}
    Here, the second inequality holds since every $F\in\calf$ with $V(F)\cap C\neq\emptyset$ for some $C\in\cala_i$, touches $C$ and is hence adjacent to it in $H$.
    The strict inequality follows from \Cref{clm:H_few_edges} together with $|I|\geq\ell'/2$, the next inequality holds since the \abpack $(D,\set{(S_j,C_j):j\in[\ell]})$ is $k$-thick, so that each $\calf_j$ has at most $k$ elements and $|\calf|=\sum_{j=1}^\ell|\calf_j|\leq\ell k$.
    The final inequality uses $\ell/\ell'\leq 8\gamma$.
\end{proof}
Combining \Cref{lem:is_a_strong_barrier} and \Cref{lem:light_layer_exists} immediately implies the main theorem, which we restate below for convenience.
\strongbarrier*

\begin{remark}
    For positive integers $\gamma,t,n,\ell,k$, with $\gamma \geq \tsquig$ and $\ell > 2\gamma$, we use $\lamone(\gamma,n,\ell)$ and $\lamtwo(\gamma,t,k)$ to denote the functions $\lambda_1(\gamma,n,\ell)$ and $\lambda_2(\gamma,t,k)$ defined in \Cref{thm:strong_barrier}.
\end{remark}

\section{Separating Slim Subsets.}\label{sec:absep}

For this section, fix $\gamma := \max\{\pslim,\tsquig\}$.
Let \defn{$T^\omega_t(n)$} denote the maximum over all graphs $G\in\calc_t$ with $|V(G)\leq n|$ and $\omega(G) \leq \omega$, and $\qslim$-slim subsets $A,B\subseteq V(G)$, the minimum size of an $A$--$B$ separator in $G$.
Now we state the main theorem of this section and provide a brief overview of the proof right below it.

\begin{restatable}{theorem}{slimseparable}\label{thm:slim_implies_separable}
    For every positive integer $t$, there exists a positive integer $\beta(t)$ such that, for every positive integer $\omega$,
    \[
        T^\omega_t(n) \ \ \leq\ \ \omega^{\mucostab}\cdot 2^{\beta(t)\log^{1-\eball/2}n}.
    \]
\end{restatable}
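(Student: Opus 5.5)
We argue by induction on $n$ and set up a recursion for $T^\omega_t(n)$. Fix $G\in\calc_t$ on $n$ vertices with $\omega(G)\le\omega$, and let $A,B$ be $\qslim$-slim. The first step is to apply \Cref{lem:AB_packing_new} with $q=\qslim$ and a length $\ell$ to be chosen later, roughly $\ell = 2^{\log^{1-\eball/2}n}$. Either we obtain an $A$--$B$ separator of size at most $c_{A,B}=(2h\tau(\omega+1))^{\mucostab}$, or we obtain a $\tau$-thick \abpack $\Pi$ of length $\ell$ and cost $c_{A,B}$. Since $h,\tau$ are $2^{O(\log^{1-\eball}n)}\cdot\ell$, the first outcome gives a bound $\omega^{\mucostab}\cdot 2^{O(\log^{1-\eball/2} n)}$, which is fine. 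In the second outcome, we delete $D'$ (cost $c_{A,B}$, added to the final separator) and work in $G'$.

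Next we apply \Cref{thm:strong_barrier} to $\Pi$ with $\gamma=\max\{\pslim,\tsquig\}$ and $k=\tau$. This gives a $\gamma$-broad strong-barrier $(L,Z,R)$ whose active components cover at most $\lambda_1=16\gamma n/\ell$ vertices and meet at most $\lambda_2=O(\tau)$ fragments. Every $A'$--$B'$ path in $G'$ contains an $L$--$R$ subpath with interior in a single active component $C$. It therefore suffices, for each active component $C$, to separate $N(L)\cap C$ from $N(R)\cap C$ inside $G'[C]$; more precisely, we separate the sets of vertices of $C$ adjacent to $L$ and to $R$. The key point is that these two sets are $\qslim$-slim in $G[C]$. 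Suppose they were $\qslim$-wide. Then many pairwise anticomplete paths would cross $C$, and by $\gamma$-broadness each of them meets $\ge\gamma\ge\pslim$ fragments. Combined with the fragments touched by $C$ being few ($\le\lambda_2$), pigeonholing should produce $\pslim$ fragments (connected, pairwise disjoint and anticomplete) that are pairwise $\qslim$-wide. This contradicts \Cref{lem:pqslim}. I would try to get this by choosing the wide family large (a function of $\lambda_2$, $\pslim$, $\qslim$), then a Ramsey-type selection of fragments hit by many paths.

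Alternatively, and more cleanly, we can separate using the fragments directly. The union of the fragments meeting active components has at most $\lambda_2$ fragments, each internally $(\tau,8)$-coverable but potentially large. So instead we recurse: each active component has at most $\lambda_1$ vertices, and slim pairs inside it are separated by at most $T^\omega_t(\lambda_1)$ vertices. Summing over active components requires their number to be controlled. Each active component touches at least $\gamma\ge1$ fragment, and the total incidence is at most $\lambda_2$, so there are at most $\lambda_2$ active components. This yields the recursion
\[
T^\omega_t(n)\ \le\ c_{A,B}+\lambda_2\, T^\omega_t(16\gamma n/\ell).
\]

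Finally we solve the recursion. With $\ell=2^{\log^{1-\eball/2}n}$, each step shrinks $\log n$ by about $\log^{1-\eball/2} n$, so the depth is about $\log^{\eball/2} n$. The multiplicative factor per level is $\lambda_2=2^{O(\log^{1-\eball}n)}$, so the product over levels is $2^{O(\log^{1-\eball/2}n)}$. The additive terms are $\omega^{\mucostab}2^{O(\log^{1-\eball/2}n)}$ times that product, which gives the claimed bound after choosing $\beta(t)$ large. I expect the main obstacle to be the slimness transfer in the second step: proving that the $L$-side and $R$-side attachments within an active component are $\qslim$-slim (or an equivalent statement permitting recursion), via \Cref{lem:pqslim} and $\gamma$-broadness. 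The bookkeeping of $\omega^{\mucostab}$ is also delicate: it should appear only additively, not multiplied at every level, so that the exponent on $\omega$ does not grow with depth.
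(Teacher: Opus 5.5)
\textbf{The gap is the step you flag yourself.} You need the vertices of an active component $C$ adjacent to $L$ and the vertices adjacent to $R$ to form a $\qslim$-slim pair in $G'[C]$. Nothing in \Cref{thm:strong_barrier} or \Cref{lem:pqslim} gives this, and it is false in general.

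Your pigeonhole argument proves only a weaker bound. Suppose there are $N$ such paths with pairwise anticomplete interiors, each meeting at least $\gamma\ge\pslim$ of the at most $\lambda_2$ fragments. Among any $\pslim$ of these fragments some pair is $\qslim$-slim (in $G$ induced on $C$ together with the fragments). A slim pair can be met by fewer than $\qslim$ of your paths. So $N<\qslim\binom{\lambda_2}{2}$, meaning the attachments are only roughly $\qslim\lambda_2^2$-slim, and $\lambda_2=\Theta(\gthick)$ grows with $n$.

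No constant bound is possible. Take a subdivided $3\times N$ grid: bottom, middle and top rails joined by $N$ rungs. Let $A,B$ be pendant vertices at opposite corners, and let each $S_j$ be one vertex on every rung at a common height, avoiding the rails.
\begin{itemize}
\item The graph has bounded treewidth, so it lies in $\calc_t$.
\item $A,B$ are slim, and the $S_j$ form an $N$-thick $(A,B)$-packing.
\item The layer containing the middle rail is a single active component. This barrier is $\gamma$-broad and light for $N$ up to the thickness.
\item Its $L$- and $R$-attachments are joined by $N$ pairwise anticomplete rung segments.
\end{itemize}
Since $T^\omega_t$ only covers $\qslim$-slim pairs, your recursion $T^\omega_t(n)\le c_{A,B}+\lambda_2T^\omega_t(\lambda_1)$ is unjustified.

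There is also a secondary problem. Your bound of $\lambda_2$ active components needs the total number of fragment--component incidences to be at most $\lambda_2$. As stated, lightness only bounds the number of distinct fragments meeting active components, and one fragment may meet several components.

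\textbf{The missing idea is to recurse on pairs of fragments, not on the two sides of $C$.}
\begin{itemize}
\item Let $Z'$ be the union of all active components, so $|Z'|\le\lambda_1$. Let $\calf'$ be the at most $\lambda_2$ fragments meeting $Z'$.
\item For every pair $F_i,F_j\in\calf'$ whose traces on $Z'$ are $\qslim$-slim in $G'[Z']$, add a minimum separator of that pair in $G'[Z']$. Each has size at most $T^\omega_t(\lambda_1)$.
\item Suppose an $L$--$R$ path survives. Its subpath with interior in $Z'$ meets at least $\gamma\ge\pslim$ fragments.
\item Apply \Cref{lem:pqslim} in $G\bigl[Z'\cup\bigcup V(F_i)\bigr]$, where the full fragments are connected and pairwise non-touching. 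This gives a pair that is $\qslim$-slim there, hence in $G'[Z']$, hence already separated. That contradicts the path's interior connecting the pair.
\end{itemize}
This yields $T^\omega_t(n)\le c_{A,B}+\lambda_2^2\,T^\omega_t(\lambda_1)$ and avoids counting active components. The extra square is harmless. With it, your depth estimate of $O(\log^{\eball/2}n)$, and your observation that $\omega^{\mucostab}$ enters only once, go through as you planned.
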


\medskip
\noindent \textbf{Proof Overview:}  The proof bounds $T_t^\omega(n)$ by a recursion, which we establish as follows.
Given $G\in\calc_t$ and $\qslim$-slim sets $A,B\subseteq V(G)$, we invoke \Cref{lem:AB_packing_new}, which either produces a small $A$--$B$ separator directly, in which case we are done, or produces a thick \abpack $\Pi = (D,\{(S_i,C_i)\}_{i\in[\ell]})$ of the desired length $\ell$. 
In the latter case we apply \Cref{thm:strong_barrier} to $\Pi$, obtaining a strong-barrier $(L,Z,R)$ that is both $\gamma$-broad and $(\lamone,\lamtwo)$-light.
Let $Z'$ denote the set of vertices belonging to active components of the barrier.
For each $\qslim$-slim pair of fragments intersecting $Z'$, we take a minimum separator between them in $G'[Z']$, and we let $\Psi$ be the union of
$D\cup\bigcup_{i\in[\ell]}C_i$ with all of these.
In \Cref{clm:psi_separates_balls_in_layer}, we use $\gamma$-broadness and $(\pslim,\qslim)$-slimness to show that $\Psi$ is an $A$--$B$ separator in $G$.
Since each of the smaller separators is computed inside a smaller instance of the same problem on the vertex set $Z'$, this yields the recursive inequality of \Cref{lem:recursion}, bounding $T^\omega_t(n)$ in terms of $T^\omega_t(\lamone)$.

The rest of the section is devoted to solving this recursion. \Cref{lem:small_absep} unrolls the recursion of \Cref{lem:recursion} into a closed-form bound on $T^\omega_t(n)$, expressed in terms of the depth of the recursion $D_\ell(n)$ needed to reduce $n$ below the base case threshold. 
\Cref{lem:fixing ell} then fixes a specific choice of $\ell = \ell_t(n)$ and bounds $D_\ell(n)$ appropriately. 
\Cref{thm:slim_implies_separable} then follows by plugging the bounds on $\ccost$ and $\lamtwo$, together with the bound on $D_\ell(n)$ from \Cref{lem:fixing ell}, into \Cref{lem:small_absep}.

\medskip
Now we move on to the formal statements and proofs.
 
\begin{lemma}\label{lem:recursion}
    Let $t,\omega$ be positive integers, and let $q := \qslim$.
    Let $\ell_t(n)$ be a non-decreasing function on the positive integers with $\ell_t(n) = o(n)$ such that there exists a positive integer $n$ satisfying $\ell_t(n) > 256\gamma^2$.
    Let $n_0 = n_0(t)$ be the smallest such positive integer.
    Then $T^\omega_t(n)$ obeys the recursive inequality
    \[
        T^\omega_t(n) \ \ \leq\ \
        \begin{cases}
            n_0, & n < n_0, \\[4pt]
            \min\big\{ n,\; \ccost(t,q,\ell_t(n),n,\omega)
            + \lamtwo(\gamma,t,\gthick(q,t,n))^2\cdot T^\omega_t\bigl(\lamone(\gamma,n,\ell_t(n))\bigr)\big\},
            & \text{otherwise}.
        \end{cases}
    \]
\end{lemma}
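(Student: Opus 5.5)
Fix $G\in\calc_t$ with $|V(G)|\le n$, $\omega(G)\le\omega$, and $q$-slim sets $A,B$ with $q=\qslim$. The case $n<n_0$ is trivial, and the bound $n$ holds since $V(G)$ is a separator. So assume $n\ge n_0$ and put $\ell:=\ell_t(n)$. Monotonicity of $\ell_t$ gives $\ell>256\gamma^2\ge 8\gamma$.

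We apply \Cref{lem:AB_packing_new} with $\ell$, letting $\tau:=\gthick(q,t,n)$. If it returns an $A$--$B$ separator of size at most $\ccost(t,q,\ell,n,\omega)$, we are done. Otherwise we obtain a $\tau$-thick \abpack $\Pi=(D,\{(S_i,C_i)\})$ of length $\ell$ and cost at most $\ccost$. Since $\gamma\ge\tsquig$ and $\ell\ge 8\gamma$, \Cref{thm:strong_barrier} (with $k=\tau$) yields a $\gamma$-broad strong-barrier $(L,Z,R)$ that is $(\lamone(\gamma,n,\ell),\lamtwo(\gamma,t,\tau))$-light. Let $Z'$ be the union of its active components, so $|Z'|\le\lamone$. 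Let $\calf'$ be the set of fragments meeting $Z'$, so $|\calf'|\le\lamtwo$. For every pair $F_1,F_2\in\calf'$ (including $F_1=F_2$ only if needed) whose traces $F_1\cap Z'$, $F_2\cap Z'$ are $q$-slim in $G'[Z']$, choose a minimum separator $X_{F_1F_2}$ between these traces in $G'[Z']$. The graph $G'[Z']$ lies in $\calc_t$, has at most $\lamone$ vertices and clique number at most $\omega$. Hence $|X_{F_1F_2}|\le T^\omega_t(\lamone)$, where we use that $T^\omega_t$ is monotone in $n$ by its definition as a maximum over graphs on at most $n$ vertices. Set $\Psi:=D'\cup\bigcup X_{F_1F_2}$. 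Then $|\Psi|\le\ccost+\lamtwo^2\,T^\omega_t(\lamone)$, which is the claimed recursion.

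It remains to show that $\Psi$ is an $A$--$B$ separator in $G$; this is the main obstacle. Suppose $P$ is an $A$--$B$ path in $G-\Psi$. Then $P$ avoids $D'$, so it lies in $G'$ and runs from $A'\subseteq L$ to $B'\subseteq R$. By the strong-barrier property, $P$ contains an $L$--$R$ subpath $P'$ whose interior lies in $Z$. This interior is connected and has neighbours in both $L$ and $R$, so it lies in a single active component; hence the interior of $P'$ is contained in $Z'$. By $\gamma$-broadness, $P'$ meets at least $\gamma\ge\pslim$ distinct fragments. The endpoints lie in $L\cup R$, and we need the fragments to be met in the interior. I would handle this by taking $P'$ to be a minimal such subpath and arguing as in \Cref{lem:is_a_strong_barrier}, where the count of sets $S_j$ met was in fact attained on the interior.

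These fragments, restricted to $Z'$, contain sub-pieces that are pairwise disjoint and anticomplete, since distinct fragments do not touch. We want $\pslim$ connected pieces, so we take the components of each trace that meet $P'$. By \Cref{lem:pqslim} applied to $G'[Z']$, some pair of these pieces is $q$-slim. Consecutive subpaths of $P'$ then join that pair inside $Z'$ while avoiding $\Psi$.

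The delicate point is the mismatch between connected components of a trace and whole fragment traces. Slimness of the pieces must transfer to slimness of the full traces, or else the separators should be indexed by the components. This would inflate the count beyond $\lamtwo^2$, unless each fragment meets $Z'$ in a bounded number of components. I would first try to define the separators directly for components of $F\cap Z'$ that meet active components, and check that the counting still gives $\lamtwo^2$, possibly with a bounded loss absorbed into the constants.
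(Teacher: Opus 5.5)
\textbf{There is a genuine gap.} Your reduction matches the paper's: the packing from \Cref{lem:AB_packing_new}, the light and broad strong-barrier, $Z'$, the at most $\lamtwo$ fragments meeting $Z'$, minimum separators between $q$-slim traces, and the count $\ccost+(\lamtwo)^2\,T^\omega_t(\lamone)$. Your remark that broadness must be witnessed by the interior of $P'$ is also correct. But the step you call the main obstacle, that $\Psi$ separates $A$ from $B$, is left unproved, and the route you sketch does not close it.

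If you apply \Cref{lem:pqslim} inside $G'[Z']$, your connected pieces must be components of the traces $V(F)\cap Z'$. A $q$-slim pair of such components does not make the full traces $q$-slim, since slimness of subsets does not transfer to supersets. So the pair you find need not be one you separated. Indexing separators by components instead destroys the bound. Lightness only bounds the number of fragments meeting $Z'$. A single fragment, connected through vertices of $D'$ or outside $Z'$, can meet $Z'$ in arbitrarily many components, so no factor $(\lamtwo)^2$ results.

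\textbf{The missing idea} is to invoke strong slimness in a \emph{larger} graph where whole fragments are connected, and then pass slimness down to the traces. Take fragments $F_1,\dots,F_{\pslim}$ met by the interior of $P'$ and set $\tilde G:=G[Z'\cup\bigcup_{i=1}^{\pslim}V(F_i)]$. Then $\tilde G\in\calc_t$. The sets $V(F_i)$ are connected in $\tilde G$, since each is a component of some $G[S_j\cup C_j]$, and they are pairwise disjoint and anticomplete. So \Cref{lem:pqslim} yields $i\neq j$ with $V(F_i),V(F_j)$ $q$-slim in $\tilde G$.

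Now use that $G'[Z']$ is an induced subgraph of $\tilde G$ and $V(F_i)\cap Z'=F'_i$. Any $q$ internally disjoint, internally anticomplete $F'_i$--$F'_j$ paths in $G'[Z']$ would also be such $V(F_i)$--$V(F_j)$ paths in $\tilde G$. Hence the traces $F'_i,F'_j$ are $q$-slim in $G'[Z']$, and the pair $\{F_i,F_j\}$ is one you separated. The interior of $P'$ is a connected subgraph of $G'[Z']-\Psi$ meeting both traces, which gives the contradiction. This keeps the separators indexed by pairs of fragments, which is exactly what produces the factor $(\lamtwo)^2$.
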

\begin{proof}
    We prove this by induction on $n$.
    For the base case observe that if $n < n_0$, then the inequality holds trivially. 
    Now, fix a positive integer $n \geq n_0$ and assume that the statement holds for all strictly smaller positive integers.
    Let $G\in \calc_t$ and $A,B\subseteq V(G)$ be the graph and the pair of vertex subsets of $G$ that realize $T^\omega_t(n)$.

    Assume that $|V(G)| = n$, otherwise the size of the smallest $A$--$B$ separator in $G$ has size less than $T_t^\omega(n-1)$, which is at most $T_t^\omega(n)$ as $\ccost$, $\lamone$ and $\lamtwo$ are non-decreasing in $n$.
    Since it is clear that $T^\omega_t(n)\leq n$, we focus on the second term. Let $\ell = \ell_t(n)$.
    By \Cref{lem:AB_packing_new} applied to the tuple $(G,A,B,q,\ell)$, $G$ either has an $(A,B)$-separator of size $\ccost(t,q,\ell,n,\omega)$, or a $\gthick(q,t,n)$-thick \abpack $\Pi$ in $G$ of length $\ell$ and cost at most $\ccost(t,q,\ell,n,\omega)$.
    Assume the latter since in the former case, we are done.
    %
    Now, as $\ell \geq \ell_t(n_0) \geq 8\gamma$ and as $\gamma \geq \tsquig$, we may apply \Cref{thm:strong_barrier} 
    to obtain a strong-barrier $(L,Z,R)$ corresponding to $\Pi$ which is $\gamma$-broad and $(\lamone(\gamma,n,\ell),\, \lamtwo(\gamma,t,\gthick(q,t,n)))$-light.
    Let $Z' := \bigcup_{C\in\cala}V(C)$, where $\cala$ denotes the active components of $(L,Z,R)$.
    Let $\calf$ denote the fragments of $\Pi$ and let $\calf' := \{F\in\calf : V(F)\cap Z'\neq\emptyset\}$.
    Consider the graph $G'[Z']$, and denote $V(F)\cap Z'$ using $F'$ for every $F\in\calf'$.
    Let,
    $$
    \hat{\calf}\ \ :=\ \ \{\{F_{i},F_{j}\}~:~F_{i},F_{j}\in\calf',\, F'_{i},F'_{j} \text{ is a $q$-slim pair in }G'[Z']\}.
    $$
    Let $\Psi := \bigcup_{\{F_i,F_j\}\in\hat{\calf}}\Psi_{ij}$, where, for each $\{F_i,F_j\}\in\hat{\calf}$, $\Psi_{ij}$ denotes the smallest $F'_i$--$F'_j$ separator in $G'[Z']$.

    \begin{claim}\label{clm:psi_separates_balls_in_layer}
        $D'\cup \Psi$ is an $L$--$R$ separator in $G$.
    \end{claim}
   \begin{claimproof}
        To prove the claim, we need only show that $\Psi$ is an $L$--$R$ separator in $G'$.
        Suppose not; then $G'-\Psi$ contains an $L$--$R$ path $P$.
        By definition of active components, there must exist a subpath $P'$ of $P$, with an endpoint each in $L$ and $R$, whose interior is contained in $Z'$.
        Then since $(L,Z,R)$ is $\gamma$-broad, the interior of $P'$ intersects at least $\pslim\leq\gamma$ distinct fragments $F_1,\dots,F_{\pslim}\in\calf'$ of $\Pi$.
        Let $\tilde{G}:=G\bigl[Z'\cup\bigcup_{i=1}^{\pslim}V(F_i)\bigr]$.
        Since $\calc_t$ is hereditary, $\tilde{G}\in\calc_t$.
        Furthermore, the sets in $\{V(F_1),\dots,V(F_{\pslim})\}$ pairwise do not touch, and are connected in $\tilde{G}$.
        Since $\tilde{G}$ is strongly-$(\pslim,\qslim)$-slim by \Cref{lem:pqslim} and since $q = \qslim$, there exists a pair $V(F_{i}),V(F_{j})$ that is $q$-slim in $\tilde{G}$.
        Since $G'[Z']$ is an induced subgraph of $\tilde{G}$, each $F'_{i}$--$F'_{j}$ path in $G'[Z']$ is also a $V(F_i)$--$V(F_j)$ path in $\tilde{G}$, so the pair $F'_{i}, F'_{j}$ is $q$-slim in $G'[Z']$.
        But then $\Psi$ separates $F'_{i}$ from $F'_{j}$ in $G'[Z']$, contradicting that the interior of $P'$ is a path in $G'-\Psi$ that intersects both $F'_{i}$ and $F'_{j}$.
    \end{claimproof}

    \noindent Now, from \Cref{clm:psi_separates_balls_in_layer} it follows that,
    \begin{align*}
        T_t^\omega(n)\quad
        &\leq\quad |D'| + |\Psi| \\
        &\leq\quad \ccost(t,q,\ell,n,\omega)
            \ \ +\ \ \sum_{\{F_i,F_j\}\in\hat{\calf}} |\Psi_{ij}| \\
        &\leq\quad \ccost(t,q,\ell,n,\omega)
            \ \ +\ \ \lamtwo(\gamma,t,\gthick(q,t,n))^2\cdot T^\omega_t\bigl(\lamone(\gamma,n,\ell)\bigr).
    \end{align*}
    The last inequality uses that $(L,Z,R)$ is $\bigl(\lamone(\gamma,n,\ell),\lamtwo(\gamma,t,\gthick(q,t,n))\bigr)$-light, which gives two bounds. 
    First, $|\hat{\calf}|\leq\lamtwo(\gamma,t,\gthick(q,t,n))^2$, since $\hat{\calf}\subseteq\calf'\times\calf'$. 
    Second, for every $\{F_i,F_j\}\in\hat{\calf}$, $|\Psi_{ij}|\leq T^\omega_t\bigl(\lamone(\gamma,n,\ell)\bigr)$, since $G'[Z']\in\calc_t$ with $|Z'|\leq\lamone(\gamma,n,\ell)$ and $\omega(G'[Z'])\leq\omega$, and $F'_i,F'_j$ form a $q$-slim pair.
\end{proof}

We remark that the recursion in \Cref{lem:recursion} is well defined, since $\lamone(\gamma,n,\ell_t(n)) < n$ whenever $\ell > 8\gamma$. 
Now, let $t$ be a positive integer, and let $\ell_t(n)$ be a non-decreasing function on the positive integers with $\ell_t(n) = o(n)$ such that there exists a positive integer $n$ satisfying $\ell_t(n) > 256\gamma^2$.
Let $n_0 = n_0(t)$ be the smallest such positive integer.
We define a function $D_\ell$ recursively as follows:
\[
  D_\ell(n)=
  \begin{cases}
    0 & \text{if } n < n_0,\\
    D_\ell(\lamone(\gamma, n, \ell_t(n))) + 1 & \text{otherwise}.
  \end{cases}
\]
Note that this is well defined, since $\lamone(\gamma,n,\ell_t(n)) < n$ whenever $\ell > 8\gamma$. 

\begin{lemma}\label{lem:small_absep}
    Let $t,\omega$ be positive integers, and let $q := \qslim$.
    Let $\ell = \ell_t(n)$ be a non-decreasing function on the positive integers with $\ell_t(n) = o(n)$ such that there exists a positive integer $n$ satisfying $\ell_t(n) > 256\gamma^2$.
    Let $n_0 = n_0(t)$ be the smallest such positive integer.
    Then $T^\omega_t(n)$ obeys the inequality 
    $$
    T^\omega_t(n)\ \ \leq\ \ (n_0+1)\cdot \ccost(t,q,\ell_t(n),n,\omega)\cdot\lamtwo(\gamma,t,\gthick(q,t,n))^{2D_\ell(n)}
    $$ \
\end{lemma}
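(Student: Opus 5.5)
We unroll the recursion of \Cref{lem:recursion} by strong induction on $n$. Throughout, abbreviate $c(n):=\ccost(t,q,\ell_t(n),n,\omega)$ and $\Lambda(n):=\lamtwo(\gamma,t,\gthick(q,t,n))$. Two monotonicity facts will be used. First, both $c$ and $\Lambda$ are non-decreasing in $n$: $\kball(t,n)$, $\lceil\log n\rceil$ and $\ell_t(n)$ are non-decreasing, and $c$ and $\Lambda$ are increasing expressions in these quantities. Second, $c(n)\ge 1$ and $\Lambda(n)\ge 1$ for all $n$.

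\emph{Base case, $n<n_0$.} Here $D_\ell(n)=0$, and \Cref{lem:recursion} gives $T^\omega_t(n)\le n_0\le (n_0+1)c(n)$, as required.

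\emph{Inductive step, $n\ge n_0$.} Set $m:=\lamone(\gamma,n,\ell_t(n))$. By the remark following \Cref{lem:recursion}, $m<n$, and by definition $D_\ell(n)=D_\ell(m)+1$. Applying \Cref{lem:recursion} and then the induction hypothesis at $m$ gives
\[
T^\omega_t(n)\ \le\ c(n)+\Lambda(n)^2\,(n_0+1)\,c(m)\,\Lambda(m)^{2D_\ell(m)}.
\]
Since $m<n$, monotonicity gives $c(m)\le c(n)$ and $\Lambda(m)\le\Lambda(n)$, so
\[
T^\omega_t(n)\ \le\ c(n)+(n_0+1)\,c(n)\,\Lambda(n)^{2D_\ell(n)}.
\]

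This is not quite the claimed bound, because of the extra additive term $c(n)$. We would absorb it in one of two ways.

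\emph{Option 1: prove a stronger statement.} Show by the same induction that $T^\omega_t(n)\le (n_0+1)\,c(n)\,\bigl(\sum_{j=0}^{D_\ell(n)}\Lambda(n)^{2j}\bigr)$. Then use $\Lambda(n)\ge 2$ (immediate from the definition, since $\lamtwo\ge 16$) to get $\sum_{j=0}^{D}\Lambda^{2j}\le \tfrac{4}{3}\Lambda^{2D}$.

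\emph{Option 2: absorb the term directly.} Note that $\Lambda(n)^2\ge 256\ge n_0+2$ can fail, so instead observe that the inductive step only needs
\[
c(n)+(n_0+1)\,c(n)\,\Lambda^{2(D-1)}\Lambda^2\ \le\ (n_0+1)\,c(n)\,\Lambda^{2D},
\]
which is false as written. The cleaner route is therefore to bound the recursive term by $\Lambda^2\,(n_0+1)\,c\,\Lambda^{2(D-1)}$ and check that the sum telescopes into at most $(n_0+1)\,c\,\Lambda^{2D}$. This works when $(n_0+1)\Lambda^{2D-2}(\Lambda^2-1)\ge 1$, i.e.\ when there is at least one level of slack. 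That holds because $\Lambda^2-1\ge 1$ and $n_0+1\ge 1$. Concretely, the induction hypothesis can be applied in the stronger form $T(m)\le (n_0+1)c(m)\Lambda(m)^{2D(m)}-c(m)$, which holds at the base since $n_0\le (n_0+1)c-c$.

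So the plan is to prove the sharpened invariant
\[
T^\omega_t(n)\ \le\ (n_0+1)\,c(n)\,\Lambda(n)^{2D_\ell(n)}-c(n)
\]
by the same induction. In the inductive step this gives
\[
c+\Lambda^2\bigl((n_0+1)\,c\,\Lambda^{2D-2}-c\bigr)\ \le\ (n_0+1)\,c\,\Lambda^{2D}-c,
\]
which holds since $\Lambda^2c\ge 2c$. The main obstacle is exactly this bookkeeping of the additive cost term, together with verifying the monotonicity of $c$ and $\Lambda$ in $n$ and that $\Lambda\ge 2$; everything else is a direct substitution.
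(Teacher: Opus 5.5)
Your final plan, proving the sharpened invariant $T^\omega_t(n)\le (n_0+1)c(n)\Lambda(n)^{2D_\ell(n)}-c(n)$ by induction along the recursion of \Cref{lem:recursion}, is correct and is essentially the paper's argument. The paper subtracts $c/(\Lambda^2-1)$ instead of $c$, and it freezes $c,\Lambda$ at a top value $N$ through an auxiliary function $H_N$ rather than upgrading $c(m),\Lambda(m)$ to $c(n),\Lambda(n)$ as you do; your upgrade is legitimate because the hypothesis at $m$ reads $c(m)\bigl[(n_0+1)\Lambda(m)^{2D_\ell(m)}-1\bigr]$ with a nonnegative bracket, but you should discard Option~1, which as stated only yields the bound up to an extra factor $\tfrac43$.
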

\begin{proof}
    For ease of exposition, we write $c(n) := \ccost(t,q,\ell,n,\omega)$, $\lambda_1(n) := \lamone(\gamma,n,\ell)$, and $\lambda_2(n) := \lamtwo(\gamma,t,\gthick(q,t,n))$.
    To prove the lemma, for each positive integer $N$ we define a function on the positive integers by
    \[
        H_N(n) \ \ =\ \
        \begin{cases}
            n_0, & \text{if } n \leq n_0, \\[4pt]
            c(N) + \lambda_2(N)^2\cdot H_N\bigl(\lambda_1(n)\bigr), & \text{otherwise}.
        \end{cases}
    \]
    Since the functions $c$ and $\lambda_2$ are non-decreasing in $n$, $T_t^\omega(n) \leq H_N(n)$ for every positive integer $N$ and every $n \in [N]$. 
    Hence, to prove the lemma, it suffices to upper bound $H_N(N)$.
    To this end, we claim that
    \[
        H_N(n) \ \ \leq\ \ (n_0+1)\cdot c(N)\cdot\lambda_2(N)^{2D_\ell(n)}
            \;-\; \frac{c(N)}{\lambda_2(N)^2 - 1}
    \]
    for all positive integers $n,N$. 
    The right-hand side is well defined since $\lambda_2(N) > 1$ for every $N$. 
    We fix an arbitrary positive integer $N$ and prove the claim for that $N$ by induction on $n$.
    For the base case, observe that when $n < n_0$, since $\lambda_2(N) \geq 2$ and $c(N)\geq 1$, we have,
    \begin{align*}
        (n_0+1)\cdot c(N)\cdot\lambda_2(N)^{2D_\ell(n)} - \frac{c(N)}{\lambda_2(N)^2 - 1} \quad 
        &\geq\quad (n_0+1)\cdot c(N) - c(N) \\
        &\geq\quad n_0 \quad =\quad H_N(n).
    \end{align*}
    For the inductive step, fix a positive integer $n \geq n_0$ and assume that the statement holds for all positive integers smaller than $n$.
    We have,
    
    \begin{align*}
        H_N(n)\ \
        &= \ \ c(N) + \lambda_2(N)^2\cdot H_N\bigl(\lambda_1(n)\bigr) \\
        &\leq \ \ c(N) + \lambda_2(N)^2\cdot
            \Biggl((n_0+1)\cdot c(N)\cdot \lambda_2(N)^{2D_\ell(\lambda_1(n))} \;-\; \frac{c(N)}{\lambda_2(N)^2 - 1} \Biggr) \\
        &\leq \ \ c(N) + (n_0+1)\cdot c(N)\cdot \lambda_2(N)^{2(D_\ell(\lambda_1(n)) + 1)}
            \;-\; c(N)\cdot \frac{\lambda_2(N)^2}{\lambda_2(N)^2 - 1} \\
        &\leq \ \ (n_0+1)\cdot c(N)\cdot \lambda_2(N)^{2D_\ell(n)}
            \;-\; c(N)\left(\frac{\lambda_2(N)^2}{\lambda_2(N)^2 - 1} - 1 \right) \\
        &\leq \ \ (n_0+1)\cdot c(N)\cdot \lambda_2(N)^{2D_\ell(n)}
            \;-\; \frac{c(N)}{\lambda_2(N)^2 - 1}.
    \end{align*}
    This proves our claim and consequently the lemma.
\end{proof}

\begin{lemma}\label{lem:fixing ell}
    Let $t$ be a positive integer, and let $\ell = \ell_t(n) := 2^{\dball\log^{1-\frac{\eball}{2}}n}$. Then $D_\ell(n)$ is well-defined and at most $\tfrac{4}{\dball\eball}\log^{\eball/2} n$.
\end{lemma}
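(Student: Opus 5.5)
We need to bound the recursion depth of $n \mapsto \lamone(\gamma,n,\ell_t(n)) = 16\gamma n/\ell_t(n)$. Write $d:=\dball$ and $\epsilon:=\eball$, and set $x := \log n$.

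\textbf{Step 1: well-definedness.} The function $\ell_t(n)=2^{d\log^{1-\epsilon/2}n}$ is non-decreasing and is $o(n)$, because $1-\epsilon/2<1$. It is also unbounded, so some $n$ satisfies $\ell_t(n)>256\gamma^2$, and $n_0$ exists. For $n\ge n_0$ we have $\ell_t(n)\geq\ell_t(n_0)>256\gamma^2\geq 16\gamma$. Hence $\lamone(\gamma,n,\ell_t(n))<n$, the recursion strictly decreases, and $D_\ell$ is well defined.

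\textbf{Step 2: one step of the recursion.} Write $m=\lamone(\gamma,n,\ell_t(n))$ and $x'=\log m$. Then
\[
x' \;=\; x+\log(16\gamma)-d\,x^{1-\epsilon/2}.
\]
For $n\ge n_0$ we have $\ell_t(n)>256\gamma^2\ge(16\gamma)^2$, so $\log(16\gamma)<\tfrac12 d x^{1-\epsilon/2}$. Therefore each step satisfies
\[
x' \;\le\; x-\tfrac{d}{2}x^{1-\epsilon/2}.
\]

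\textbf{Step 3: count the steps with a potential.} Use $\Phi(x)=x^{\epsilon/2}$. By concavity of $y\mapsto y^{\epsilon/2}$ on $(0,\infty)$, for $x'<x$,
\[
x^{\epsilon/2}-x'^{\epsilon/2}\;\ge\;\tfrac{\epsilon}{2}x^{\epsilon/2-1}(x-x')\;\ge\;\tfrac{\epsilon}{2}x^{\epsilon/2-1}\cdot\tfrac d2x^{1-\epsilon/2}\;=\;\tfrac{d\epsilon}{4}.
\]
If $x'\le 0$, the recursion has already dropped below $n_0$ (provided $n_0\geq 2$), so that step is simply the last one. Each step therefore decreases $\Phi$ by at least $d\epsilon/4$, and $\Phi$ starts at $\log^{\epsilon/2}n$ and stays nonnegative. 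So the number of steps is at most
\[
\frac{4}{d\epsilon}\log^{\epsilon/2}n+1.
\]
An induction on $n$ gives this directly: $D_\ell(n)=1+D_\ell(m)$, and the inductive hypothesis applied to $m$ adds $\tfrac{4}{d\epsilon}x'^{\epsilon/2}$.

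\textbf{Main obstacle: the $+1$.} The last step, taken when $n\ge n_0$ but $m<n_0$, adds one to the count and must be absorbed. I would handle it by observing that $n\ge n_0$ forces $d x^{1-\epsilon/2}>\log(256\gamma^2)\geq 8$. The final step then lands at $x'\le x-\tfrac d2x^{1-\epsilon/2}$, which is strictly below $x$. This lets me either replace the drop bound $d\epsilon/4$ by a slightly larger one, or use the fact that $x^{\epsilon/2}$ itself is at least $d\epsilon/4$, so the last step's potential drop also pays for it. If neither works cleanly, I would prove the bound with $\lfloor\cdot\rfloor$, or with the $+1$ absorbed using $\log^{\epsilon/2}n\geq 1$ for $n\geq n_0\ge 2$, by shrinking the per-step drop constant. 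The factor-of-two slack between $\log(16\gamma)$ and $\log(256\gamma^2)$ in Step 2 is exactly what supplies this room.
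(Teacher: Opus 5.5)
\textbf{The gap: you never reach the stated bound.} Your Steps 1--3 match the paper's argument: the inequality $\log(16\gamma)<\tfrac12\log\ell_t(n)$ from $\ell_t(n)>(16\gamma)^2$, and the concavity (mean value) bound showing that $\log^{\epsilon/2}$ drops by at least $d\epsilon/4$ per step. But you stop at $\frac{4}{d\epsilon}\log^{\epsilon/2}n+1$ and only list possible ways to remove the $+1$, none carried out.

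In the generic case there is nothing to remove. Run the strong induction with the target bound itself as hypothesis, as the paper does. The base case is $D_\ell(m)=0\le\frac{4}{d\epsilon}\log^{\epsilon/2}m$ for $1\le m<n_0$. So for $n\ge n_0$ with $m=\lambda_1(n)=16\gamma n/\ell_t(n)\ge 1$,
\[
D_\ell(n)=D_\ell(m)+1\le \tfrac{4}{d\epsilon}\bigl(\log^{\epsilon/2}n-\tfrac{d\epsilon}{4}\bigr)+1=\tfrac{4}{d\epsilon}\log^{\epsilon/2}n .
\]
The last step's own potential drop pays for it, whether or not $m<n_0$. In your counting language, the last step also lands at $\Phi\ge 0$, so it is counted like every other step.

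\textbf{Where your worry is genuine.} It is only real when $x'<0$, i.e.\ $\lambda_1(n)<1$, and there your proposed fixes do not work. The ``fact'' $x^{\epsilon/2}\ge d\epsilon/4$ does not follow from $dx^{1-\epsilon/2}>8$: take $x=1$, $d=20$, $\epsilon=\tfrac12$. A larger per-step drop or a smaller constant proves a different inequality.

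In fact the stated bound can fail in this case. If $2^{d}>256\gamma^2$, then $n_0=2$ and $\lambda_1(2)=32\gamma/2^{d}<1$, so $D_\ell(2)=1$. This exceeds $\frac{4}{d\epsilon}$ whenever $d\epsilon>4$, and nothing in the hypotheses rules that out. The paper's proof skips this case as well: it applies the mean value theorem on $[\log\lambda_1(n),\log n]$ without checking $\lambda_1(n)\ge 1$.

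\textbf{Repair.} Prove $D_\ell(n)\le\frac{4}{d\epsilon}\log^{\epsilon/2}n+1$ instead. Your potential argument already gives this, since all intermediate values are at least $n_0\ge 2$ and only the final step can land below $1$. The weaker bound is harmless downstream: in the proof of Theorem~\ref{thm:slim_implies_separable} it costs only an extra factor $\lambda_2^2=2^{O(\log^{1-\epsilon}n)}$.
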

\begin{proof}
    Since $O(\log^{1-\eball/2}n) \subseteq o(\log n)$, we have $\ell_t(n) = o(n)$, and since $\ell_t$ is a monotonically increasing function, $\ell_t(n) > 256\gamma^2$ for some positive integer $n$.
    Consequently $D_\ell(n)$ is well-defined.
    Let $n_0$ be the smallest integer $n$ such that $\ell_t(n) > 256\gamma^2$. 
    For each positive integer $n$, write $\lambda_1(n) := \lamone(\gamma,n,\ell_t(n)) = 16\gamma n/\ell_t(n)$.
    By definition of $n_0$ and monotonicity of $\ell_t$, for every $n\geq n_0$ we have $\log(\ell_t(n)) \geq \log(\ell_t(n_0)) > 2\log(16\gamma)$, and hence
    \[
        \log n - \log\lambda_1(n) \ =\ \log n\; -\; \left(\log(16\gamma) + \log n - \log(\ell_t(n))\right)\ =\ \log(\ell_t(n)) - \log(16\gamma) \ \geq\ \tfrac{1}{2}\log(\ell_t(n)).
    \]

    We show by strong induction on $n$ that $D_\ell(n)\leq \tfrac{4}{\dball\eball}\log^{\eball/2} n$ for every positive integer $n$.
    For the base case, observe that when $n < n_0$, we have $D_\ell(n)=0\leq \tfrac{4}{\dball\eball}\log^{\eball/2} n$, as required.
    For the inductive step, fix a positive integer $n \geq n_0$ and assume that the statement holds for all positive integers smaller than $n$.
    Now, we get the following inequality,
    \begin{align*}
        \log^{\eball/2}n - \log^{\eball/2}\lambda_1(n)\quad
        &\geq\quad \tfrac{\eball}{2}\log^{\eball/2-1}n\cdot\bigl(\log n-\log\lambda_1(n)\bigr) \\
        &\geq\quad \tfrac{\eball}{2}\log^{\eball/2-1}n\cdot\tfrac{1}{2}\log(\ell_t(n)) \\
        &\geq\quad \tfrac{\eball}{2}\log^{\eball/2-1}n\cdot\tfrac{1}{2}\dball\log^{1-\eball/2}n \\
        &=\quad \tfrac{\dball\eball}{4}.
    \end{align*}
    Here the first inequality applies the mean value theorem to the function $x\mapsto x^{\eball/2}$ on $[\log\lambda_1(n),\log n]$: since the derivative $\frac{\eball}{2}x^{\eball/2-1}$ is decreasing in $x$ as $\eball/2-1<0$, it is minimized at $x=\log n$, giving the lower bound $\frac{\eball}{2}\log^{\eball/2-1}n\cdot(\log n-\log\lambda_1(n))$. The second uses $n \geq n_0$ combined with our earlier observation, and the last one substitutes $\log(\ell_t(n))=\dball\log^{1-\eball/2}n$.
    Combining this with the definition of $D_\ell$ and the inductive assumption, yields the following. 
    \begin{align*}
        D_\ell(n) \quad &=\quad D_\ell(\lambda_1(n)) + 1 \\
        &\leq\quad \tfrac{4}{\dball\eball}\log^{\eball/2}(\lambda_1(n)) + 1 \\
        &\leq\quad \tfrac{4}{\dball\eball}\Bigl(\log^{\eball/2} n - \tfrac{\dball\eball}{4}\Bigr) + 1 \\
        &=\quad \tfrac{4}{\dball\eball}\log^{\eball/2} n.
    \end{align*}
    This completes the induction and thereby the proof of the lemma.
\end{proof}

Now we are ready to prove the main theorem of this section which we restate here for convenience.

\slimseparable*
\begin{proof}
    Let us assume without loss of generality that $n \geq 2$, since otherwise the theorem holds trivially.
    Fix $t$, let $\omega$ be a positive integer, set $q:=\qslim$, and let $\ell=\ell_t(n):=2^{\dball\log^{1-\eball/2}n}$.
    Let $n_0$ be the smallest integer $n$ such that $\ell_t(n) > 256\gamma^2$, which exists since $\ell_t$ is a monotonically increasing function on $n$.
    By \Cref{lem:small_absep},
    \[
        T^\omega_t(n) \ \ \leq\ \ (n_0+1)\cdot\ccost(t,q,\ell,n,\omega)\cdot\lamtwo(\gamma,t,\gthick(q,t,n))^{2D_\ell(n)}. \tag{$\dagger$}
    \]

    \noindent We begin by upper bounding $\ccost(t,q,\ell,n,\omega)$:
    \begin{align*}
        \ccost(t,q,\ell,n,\omega)\quad 
        &=\quad \Bigl(2h\cdot(\omega+1)\cdot\gthick(q,t,n)\Bigr)^\mu  \\
        &=\quad \Bigl(2\cdot\bigl(\ell\, q\kball(\ceil{\log n}+1)\bigr)\cdot q\kball\cdot (\omega+1)\Bigr)^\mu\\
        &\leq\quad c_1\cdot \omega^{\mucostab} \cdot \left({\kball}^{2}\cdot \ell\cdot (\ceil{\log n}+1)\right)^{\mucostab} \\
        &\leq\quad \omega^{\mucostab} \cdot 2^{\log c_1\, +\, \mucostab(2\dball\log^{1-\eball}n\, +\, \dball\log^{1-\eball/2}n\, +\, c_2\log\log n)}\\
        &\leq\quad \omega^{\mucostab} \cdot 2^{c_3\,\log^{1-\eball/2}n}\tag{1}
    \end{align*}
    Here $c_1,c_2$ and $c_3$ are suitable constants that depend only on $t$.
    The last inequality uses the fact that $O(\log\log n)\subseteq O(\log^{1-\eball}n)\subseteq o(\log^{1-\eball/2}n)$ and that $n \geq 2$.
    The rest of the transitions only involve substituting definitions or performing simplifications.
    Next, since $\gamma,\tsquig,\gcost(\tsquig)$ and $q$ depend only on $t$ and since $D_\ell(n)\leq\tfrac{4}{\dball\eball}\log^{\eball/2}n$, we have the following bound:
    \begin{align*}
        \lamtwo(\gamma,t,\gthick(q,t,n))^{2D_\ell(n)}
        \quad &=\quad \bigl(16\gamma\,\tsquig\,\gcost(\tsquig)\cdot q\kball\bigr)^{2D_\ell(n)} \\
        \quad &\leq\quad \bigl(c_4\,2^{\dball\log^{1-\eball}n}\bigr)^{\tfrac{8}{\dball\eball}\log^{\eball/2}n} \\
        \quad &\leq\quad 2^{c_5\,\log^{1-\eball/2}n}. \tag{2}
    \end{align*}
    Here $c_4$ and $c_5$ are suitable constants that depend only on $t$.
    Combining (1) and (2) via $(\dagger)$, gives
    \begin{align*}
        T^\omega_t(n)
        \quad &\leq\quad (n_0+1)\cdot \omega^{\mucostab}\cdot 2^{c_3\,\log^{1-\eball/2}n}\cdot 2^{c_5\,\log^{1-\eball/2}n} \\
        \quad &\leq\quad \omega^{\mucostab}\cdot 2^{\log (n_0+1)\, +\, (c_3+c_5)\log^{1-\eball/2}n} \\
        \quad &=\quad \omega^{\mucostab}\cdot 2^{\beta(t)\log^{1-\eball/2}n}.
    \end{align*}
    Here $\beta(t)$ is a suitably chosen constant that depends only on $t$. 
    This completes the proof of the theorem.
\end{proof}

\section{Proof of the Main Theorem.}\label{sec:mainthm}

In this section we complete the proof of Theorem~\ref{thm:atw_bound}, the main technical result of the paper. 
We first prove \Cref{lem:half_balanced_sep}, which claims the existence of a $(Y,\tfrac12)$-balanced separator for graphs in $\calc_t$, whose size depends on $\omega(G)$, via an argument similar to that of \Cref{lem:recursion}. 
We then use the results of Chudnovsky, E S, and Lokshtanov~\cite{alphacontainers} to complete the proof of the theorem, bounding the tree-independence number of $\calc_t$.

\begin{lemma}\label{lem:half_balanced_sep}
    For every positive integer $t$, there exists positive integers $\nu(t),\mu(t)$ such that, for every graph $G\in\calc_t$ on $n$ vertices and every vertex subset $Y\subseteq V(G)$, $G$ has a $(Y,\frac{1}{2})$-balanced separator of size at most $\omega(G)^{\mu(t)}\cdot2^{\nu(t)\log^{1-\eball/2}n}$.
\end{lemma}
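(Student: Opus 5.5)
Set $\omega:=\omega(G)$ and apply \Cref{lem:Y_packing_new} to $G$ and $Y$ with a length $\ell$ to be fixed later; I will take $\ell:=2^{\dball\log^{1-\eball/2}n}$, the same choice as in \Cref{lem:fixing ell}. If the lemma yields a $(Y,\frac12)$-balanced separator of size at most $\kcost(t,\ell,n,\omega)$, we stop. The same computation as in inequality (1) in the proof of \Cref{thm:slim_implies_separable} bounds this quantity by $\omega^{\mucostbal}\cdot 2^{c\log^{1-\eball/2}n}$.

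Otherwise we have a $K$-thick $(Y,\frac12)$-packing $\Pi=(D,\{(S_i,C_i)\})$ of length $\ell$ and cost at most $\kappa$. Put $G':=G-D'$ and $Y':=Y\cap V(G')$. If $|Y'|\le|Y|/2$ then $D'$ itself separates, so assume $|Y'|>|Y|/2$. The strategy is to turn this situation into a single $A$--$B$ separation problem with a slim pair, and then apply \Cref{thm:slim_implies_separable}, exactly as \Cref{lem:recursion} does.

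\textbf{Reducing to a slim pair.} Since every $S_i$ is a $(Y',\frac12)$-balanced separator in $G'$ and the $S_i\cup C_i$ are pairwise anticomplete, I would take a component structure relative to the fragments $\calf$. Concretely, build the tree-like ``distance'' function $d$ as in \cref{sec:barrier}, but measured from a heavy region instead of from $A'$. Fix a vertex set $A'$, the part of $G'\setminus S_1$ that contains the most $Y'$. Define layers by the number of distinct $S_j$ met along paths. By averaging over the $\ell'\approx\ell/(4\gamma)$ layers, pick a layer whose active components are $(\lambda_1,\lambda_2)$-light; this is the same argument as \Cref{lem:light_layer_exists}, since that argument used only the packing structure and not the $A$--$B$ property. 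The barrier's $\gamma$-broadness, combined with strong-$(\pslim,\qslim)$-slimness (\Cref{lem:pqslim}) as in \Cref{clm:psi_separates_balls_in_layer}, shows that separating every $\qslim$-slim pair of fragments inside the active components cuts $L$ from $R$. Each such separation costs at most $T^\omega_t(\lamone)\le T^\omega_t(n)$, which \Cref{thm:slim_implies_separable} bounds by $\omega^{\mucostab}2^{\beta\log^{1-\eball/2}n}$, and at most $\lamtwo^2=2^{O(\log^{1-\eball}n)}$ pairs occur. The resulting set $D'\cup\Psi$ separates $L$ from $R$. I would then argue that each component of $G-(D'\cup\Psi)$ lies in $L$ or in $R$, so has at most $|Y|/2$ vertices of $Y$. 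The justification I intend is that each part avoids some $S_i$-induced balance, because the components of $G'-S_i$ carry at most $|Y'|/2$.

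\textbf{Main obstacle.} The hard step is guaranteeing balance on both sides, not merely an $L$--$R$ cut. My first attempt is to choose $L$ to be the union of the layers below the light layer, starting from the heaviest component $A'$ of $G'-S_1$. Every component of $G'[L]$ minus the cut, and every component of $G'[R]$, sits inside a component of $G'-S_j$ for some $j$ in the relevant range, and each of those components has weight at most $|Y'|/2\le|Y|/2$. If this fails, the fallback is to recurse: find an $A$--$B$ separation for a slim pair $(A,B)$, which is available by \Cref{lem:pqslim}, and then iterate $O(\log n)$ times via \Cref{prop: central bag}-style halving. This multiplies the bound only by $\log n$, which is absorbed into $2^{\nu(t)\log^{1-\eball/2}n}$. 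Finally, take $\mu(t):=\max(\mucostab,\mucostbal)$ and choose $\nu(t)$ large enough to absorb the constants.
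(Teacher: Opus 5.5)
\textbf{The gap is the balance step}, which you correctly flag as the main obstacle. Neither of your two attempts closes it.

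The barrier of \cref{sec:barrier} only guarantees that $D'\cup\Psi$ cuts $L$ from $R$. It says nothing about how much of $Y$ either side carries. Since $\Psi$ lies inside the active components of the chosen layer $\call_i$, the set $L=\calb_{(2i-1)\gamma}$ is untouched by $\Psi$. Moreover, $G'[L]$ contains your heavy region $A'$, the adjacent vertices of $S_1$, and everything reachable after crossing at most $(2i-1)\gamma$ of the sets $S_j$. So your justification, that each such component avoids some $S_j$ and therefore sits inside a component of $G'-S_j$, is false: the component containing $A'$ meets $S_1$, and typically many other $S_j$.

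Nothing in the packing rules this out. Take $G$ a long path, $Y=V(G)$, $D=C_j=\emptyset$, and $S_j$ the two vertices at distance $2j$ on either side of the midpoint. All properties guaranteed by \Cref{lem:Y_packing_new} hold. Yet $A'$ is essentially one half of the path, and $L$ is a connected interval strictly containing that half, so it has more than $|Y|/2$ vertices of $Y$ in a single component of $G-(D'\cup\Psi)$.

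The barrier sufficed in \Cref{lem:recursion} only because the goal there is to cut $A'\subseteq L$ from $B'\subseteq R$; there is no analogue in the balanced setting. Your fallback (separate some slim pair supplied by \Cref{lem:pqslim} and halve $O(\log n)$ times) names neither the pair nor a reason why separating it shrinks the heavy component, so it is not yet an argument.

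\textbf{The missing idea is that no layering is needed.} The balance property of each $S_i$ already forces a heavy component to meet every packing set.

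Apply \Cref{lem:Y_packing_new} with the constant length $\ell=\pslim$. The fragments are the components of the graphs $G[S_i\cup C_i]$; there are at most $4\kball\pslim$ of them. Let $\Psi$ be the union of minimum separators, taken in all of $G'$, over every pair of fragments $F_a,F_b$ whose traces $F_a\cap V(G')$ and $F_b\cap V(G')$ are $\qslim$-slim in $G'$. Each such separator has size at most $T^\omega_t(n)$ by \Cref{thm:slim_implies_separable}.

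Suppose a component $K$ of $G'-\Psi$ had more than $|Y|/2\geq|Y'|/2$ vertices of $Y$. Then $K$ cannot lie inside a component of $G'-S_i$, so for every $i\in[\pslim]$ it meets $S_i$, and hence meets some fragment $F_i$ of $S_i\cup C_i$. These $\pslim$ fragments are connected and pairwise non-touching. Applying \Cref{lem:pqslim} to $G[V(G')\cup\bigcup_i V(F_i)]$ yields a $\qslim$-slim pair $F_i,F_j$. Their traces are then $\qslim$-slim in $G'$, so $\Psi$ separates them. This contradicts that $K$ is connected and meets both.

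Hence $D'\cup\Psi$ is a $(Y,\frac12)$-balanced separator of size at most $\kappa+(4\kball\pslim)^2\,T^\omega_t(n)$, with $\kappa$ as in \Cref{lem:Y_packing_new}, which gives the claimed bound. The barrier machinery is needed only inside the proof of \Cref{thm:slim_implies_separable}.
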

\begin{proof}
    Let us assume without loss of generality that $n \geq 2$, since otherwise the theorem holds trivially.
    Write $p:=\pslim$, $q:=\qslim$, and $\omega:=\omega(G)$.
    By \Cref{lem:Y_packing_new} applied to $(G,Y)$ with $\ell = p$, $G$ either has a $Y$-balanced-separator of size $\kcost(t,p,n,\omega)$, or a \balpack $(D,\set{(S_i,C_i):i\in[p]})$ of length $p$ that is $4\kball$-thick and $\kcost(t,p,n,\omega)$-costly.
    Assume the latter since in the former case, we are done via the upper bound on $\kcost(t,p,n,\omega)$ that we prove towards the end of this lemma.
    Set $D':=D\cup\bigcup_{i=1}^p C_i$ and $G':=G-D'$, so that each $S_i$ is a $(Y,\frac{1}{2})$-balanced separator in $G'$.
    For every $i\in[p]$, let $\calf_i$ denote the set of connected components of $G[S_i\cup C_i]$, which, with slight abuse of notations, we call \emph{fragments}.
    Since $(D,\set{(S_i,C_i):i\in[p]})$ is $4\kball$-thick, we have $|\calf_i|\leq 4\kball$ for every $i\in [p]$.
    Let $\calf := \bigcup_{i\in[p]}\calf_i$ and let $F' := F\cap V(G')$ for every $F\in\calf$.
    Let,
    $$
    \hat{\calf}\quad :=\quad \{\{F_{i},F_{j}\}~:~F_{i},F_{j}\in\calf, F'_i,F'_j \text{ is a $q$-slim pair in } G'\}.
    $$
    For every $\{F_{i},F_{j}\}\in\hat{\calf}$, let $\Psi_{ij}$ denote the smallest $F'_{i}$--$F'_{j}$ separator in $G'$, which by \Cref{thm:slim_implies_separable} has size at most $T^\omega_t(n)$.
    Finally, let $\Psi := \bigcup_{\{F_i,F_j\}\in\hat{\calf}}\Psi_{ij}$.

    \begin{claim}\label{clm:half_balanced_after_psi}
        $D'\cup\Psi$ is a $(Y,\frac{1}{2})$-balanced separator in $G$.
    \end{claim}
    \begin{claimproof}
        To prove the claim, we need only show that $\Psi$ is a $(Y,\frac{1}{2})$-balanced separator in $G'$.
        Suppose not; then $G'-\Psi$ contains a component $K$ with $|Y\cap V(K)|>\frac{1}{2}|Y|$.
        Fix $i\in[p]$. Since $S_i$ is a $(Y,\frac{1}{2})$-balanced separator in $G'$, every component of $G'-S_i$ contains at most $\frac{1}{2}|Y|$ vertices of $Y$.
        Consequently $V(K)$ does not induce a connected subgraph of $G'-S_i$, which implies $V(K)\cap S_i\neq\emptyset$.
        Let $F_i\in\calf_i$ be such that $K\cap F'_i$ is non-empty.
        
        Define $\tilde{G} := G\bigl[V(G')\cup\bigcup_{i=1}^{p}V(F_i)\bigr]$.
        Since $\calc_t$ is hereditary, $\tilde{G}\in\calc_t$.
        Furthermore, the sets in $\{V(F_1),\dots,V(F_{p})\}$ pairwise do not touch, and are connected in $\tilde{G}$.
        Since $\tilde{G}$ is strongly-$(\pslim,\qslim)$-slim by \Cref{lem:pqslim} and since $q = \qslim$ and $p = \pslim$, there exists a pair $V(F_{i}),V(F_{j})$ that is $q$-slim in $\tilde{G}$.
        Since $G'$ is an induced subgraph of $\tilde{G}$, every $F'_{i}$--$F'_{j}$ path in $G'$ is also a $V(F_i)$--$V(F_j)$ path in $\tilde{G}$, so the pair $F'_{i}, F'_{j}$ is $q$-slim in $G'$.
        But then $\Psi$ separates $F'_{i}$ from $F'_{j}$ in $G'$, contradicting that $K$ is a connected component of $G'-\Psi$ that intersects both $F'_{i}$ and $F'_{j}$.
    \end{claimproof}

    Now, from \Cref{clm:half_balanced_after_psi} it follows that, $D'\cup\Psi$ is a $(Y,\frac{1}{2})$-balanced separator in $G$ of size at most $\kcost(t,p,n,\omega) + (4\kball p)^2\cdot T^\omega_t(n)$, since there are at most $\sum_{i=1}^p|\calf_i|\leq 4\kball p$ fragments in total, each contributing a separator of size at most $T^\omega_t(n)$.
    It remains to bound $\kcost(t,p,n,\omega) + (4\kball p)^2\cdot T^\omega_t(n)$.
    We begin with the following,
    \begin{align*}
        \kcost(t,p,n,\omega)\quad 
        &=\quad \Bigl(2h'\cdot(\omega+1)\cdot 4\kball\Bigr)^{\mucostbal}  \\
        &=\quad \Bigl(2\cdot\bigl(p\, 4\kball(\ceil{\log n}+1)\bigr)\cdot 4\kball\cdot (\omega+1)\Bigr)^{\mucostbal}\\
        &\leq\quad c_1\cdot \omega^{\mucostbal} \cdot \left({\kball}^{2}\cdot (\ceil{\log n}+1)\right)^{\mucostbal} \\
        &\leq\quad \omega^{\mucostbal} \cdot 2^{\log c_1\, +\, \mucostbal(2\dball\log^{1-\eball}n\, +\, c_2\log\log n)}\\
        &\leq\quad \omega^{\mucostbal} \cdot 2^{c_3\,\log^{1-\eball}n}\tag{1}
    \end{align*}
    Here $c_1,c_2,c_3$ are suitable constants that depend only on $t$.
    The last inequality uses the fact that $O(\log\log n)\subseteq o(\log^{1-\eball}n)$.
    The rest of the transitions only involve substituting definitions or performing simplifications.
    Combining this with the upper bound on $T^\omega_t(n)$ from \Cref{thm:slim_implies_separable} yields,
    \begin{align*}
        \kcost(t,p,n,\omega)\; +\; (4\kball p)^2\cdot T^\omega_t(n)
        \ \ &\leq\ \ \omega^{\mucostbal}\cdot 2^{c_3\log^{1-\eball}n}\; +\; (4\kball p)^2\cdot \omega^{\mucostab}\cdot 2^{\beta(t)\log^{1-\eball/2}n} \\
        \ \ &\leq\ \ \omega^{\mucostbal}\cdot 2^{c_3\log^{1-\eball}n}\; +\; c_4\cdot 2^{2\dball\log^{1-\eball}n}\cdot \omega^{\mucostab}\cdot 2^{\beta(t)\log^{1-\eball/2}n} \\
        \ \ &\leq\ \ \omega^{\mu(t)}\cdot 2^{c_3\log^{1-\eball}n\; +\; \log c_4\; +\; 2\dball\log^{1-\eball}n\; +\; \beta(t)\log^{1-\eball/2}n} \\
        \ \ &\leq\ \ \omega^{\mu(t)}\cdot 2^{\nu(t)\log^{1-\eball/2}n},
    \end{align*}
    Here $c_4$, $\mu(t)$ and $\nu(t)$ are suitable constants that depend only on $t$, which completes the proof of the lemma.
\end{proof}





To prove the main theorem, we need the following two results by Chudnovsky, E S, and Lokshtanov~\cite{alphacontainers}.
To this end, we make the following definitions.
A family $\mathcal{F}$ of vertex subsets is a \defn{$(b,a)$-container family} in $G$, if $\alpha(F)\leq a$ for every element $F$ in $\mathcal{F}$, and for every vertex subset $Z$ satisfying $\alpha(Z) \leq b$, there exists some element $F$ of $\mathcal{F}$ such that $Z\subseteq F$.
The \defn{ complement} of a graph $G$ is denoted by $\overline{G}$ and is the graph with vertex set $V(G)$ satisfying $uv\in E(\overline{G})$ if and only if $uv\notin E(G)$ for every $u,v\in V(G)$.
For a graph $G$ and positive integer $k$ the graph \defn{$kG$} is the graph obtained by taking $k$ disjoint copies of $G$ and making the copies anti-complete to each other.
%

\begin{proposition}\label{prop:containers1}
    There exists an integer $c$ with the following property.
    There exists an algorithm that takes as input $(G, \omega, k, b)$ where $\omega$ and $k$ are positive integers, $b \leq \omega$ is a non-negative integer, and $G$ is a $\overline{kK_\omega}$-free graph.
    There exists an algorithm that outputs a
    $(b, a)$-container family ${\cal F}$ in time $|\mathcal{F}| \cdot n^{O(1)}$ such that,
    \[
        |{\cal F}| \leq (n+1)^{  {(2\omega \cdot \log (n) + k+b)^{k+b+1} }}
        \qquad\text{and}\qquad
        a \leq  (2 \omega \cdot \log(n) + k+b)^{k+b+1}.
    \]
\end{proposition}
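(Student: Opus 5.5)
The plan is induction on $k+b$. The container family is indexed by short guessed tuples of vertices: every container is a set $F(x_1,\dots,x_s)$ computed deterministically from a tuple of at most $s=(2\omega\log n+k+b)^{k+b+1}$ entries. Each entry is a vertex or a "blank" symbol, which gives the $(n+1)^{s}$ count. The algorithm enumerates all tuples and outputs those $F$ with $\alpha(F)\le a$. The running time $|\mathcal F|\cdot n^{O(1)}$ is then immediate. The real content is twofold: every $Z$ with $\alpha(Z)\le b$ lies in $F(x)$ for some tuple $x$, and each such $F(x)$ has $\alpha\le a$.

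The base cases are as follows. If $b=0$, then $Z=\emptyset$ and $\{\emptyset\}$ suffices. If $k=1$, then $G$ has no independent set of size $\omega$. Since $b\le\omega$, the whole set $V(G)$ is then a valid container with $\alpha<\omega\le a$.

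For the inductive step, fix $Z$ with $\alpha(Z)\le b$. I plan a "heavy vertex" descent, in the spirit of the layered/branching arguments of Gartland--Lokshtanov. Maintain a current vertex set $W\supseteq Z$, initially $V(G)$, and repeat the following three steps.
\begin{enumerate}
\item If $\alpha(G[W])$ is already small, stop and output $W$.
\item Otherwise, if some $z\in Z$ has at least half of $W$ outside $N[z]$, guess $z$. Split $Z\cap W$ into $Z\cap N(z)\cap W$ and $Z\cap(W\setminus N[z])$. The second part has independence number at most $b-1$, because it extends by $z$, so it is handled by the $(k,b-1)$ induction applied in $G[W\setminus N[z]]$.
\item If instead every vertex of $Z$ is adjacent to more than half of $W$, use the exclusion of $\overline{kK_\omega}$. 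Take a maximal independent set of size $\omega$ inside the high-degree part and guess it, at most $\omega$ entries. The common neighbourhood of this set is $\overline{(k-1)K_\omega}$-free, so the vertices of $Z$ inside it are handled by the $(k-1,b)$ induction. Each remaining vertex of $Z$ is non-adjacent to some guessed vertex, and so falls back into the $b-1$ case.
\end{enumerate}

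The halving in step 2 ensures the descent has depth at most $\log n$. Each level guesses at most $2\omega$ vertices, which produces the factor $2\omega\log n$ in the tuple length. Unwinding the $k+b$ levels of induction gives the exponent $k+b+1$. Each final container is a union of at most $(2\omega\log n+k+b)$ inductively obtained containers, plus a set of small independence number. Subadditivity of $\alpha$ under unions then gives the bound on $a$ by a routine calculation.

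I expect step 3 to be the main obstacle: making the high-degree case genuinely reduce $k$. The common neighbourhood of the guessed $\omega$-set must be $\overline{(k-1)K_\omega}$-free \emph{and} must capture the relevant part of $Z$. Vertices of $Z$ outside this common neighbourhood must be charged correctly to a $b-1$ subproblem, without blowing up the branching by more than a factor of $n^{O(\omega)}$ per level. My first attempt would be to choose the guessed independent set greedily among vertices of $Z$ of maximum degree in $G[W]$, so that both the halving and the reduction in $k$ can be certified by the guessed tuple alone.
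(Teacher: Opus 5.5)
The paper does not prove this proposition; it imports it as a black box from Chudnovsky, E S, and Lokshtanov. Your argument therefore has to stand on its own, and it breaks at step 3.

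\textbf{The gap in step 3.} Your claim that every vertex of $Z$ outside the common neighbourhood of the guessed independent $\omega$-set $I$ ``falls back into the $b-1$ case'' is false. The reduction $\alpha(Z\setminus N[z])\le b-1$ in step 2 works only because $z\in Z$. In step 3 the set $I$ is generally not contained in $Z$, and it cannot be: $\alpha(Z)\le b\le\omega$, and in the case the paper actually uses ($b=1$) $Z$ is a clique, so it contains at most one vertex of $I$. Your proposed remedy of choosing $I$ greedily inside $Z$ is therefore impossible. Concretely, for $b=1$ the $(k,b-1)$ subproblem can only cover the empty set. Yet a clique $Z$ can certainly contain vertices non-adjacent to some vertex of $I\setminus Z$.

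Nor can you rescue the step by continuing the descent into the leftover part. With your threshold of $\tfrac12$, each vertex of $I$ has fewer than $|W|/2$ non-neighbours in $W$, but the union of $\omega$ such non-neighbourhoods can be all of $W$, so nothing is halved. Branching separately into the $\omega$ non-neighbourhoods yields about $\omega^{\log n}$ pieces, far beyond the allowed polylogarithmic tuple length and independence number.

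\textbf{The missing idea: lower the threshold to $\frac{1}{2\omega}$.} In step 2, guess $z\in Z\cap W$ whenever $|W\setminus N[z]|\ge |W|/(2\omega)$. Recurse with $(k,b-1)$ on $Z\setminus N[z]$ and continue with $W\cap N(z)$. This shrinks $W$ only by a factor $1-\frac{1}{2\omega}$, so there are about $2\omega\ln n$ such steps. That, not ``$2\omega$ guesses per level'', is the source of the $2\omega\log n$ term.

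In the complementary case, all of $Z\cap W$ lies in
$H=\{v\in W : |W\setminus N[v]|<|W|/(2\omega)\}$, which depends only on $W$.
\begin{itemize}
\item If $\alpha(G[H])<\omega$, stop and output $H$.
\item Otherwise, pick an independent $I\subseteq H$ with $|I|=\omega$. This choice does not involve $Z$, so $I$ need not be guessed.
\end{itemize}
As you say, $Z\cap\bigcap_{v\in I}N(v)$ lies in an $\overline{(k-1)K_\omega}$-free set and goes to a $(k-1,b)$ subcall. The remaining vertices of $Z\cap H$ all lie in the single set $\bigcup_{v\in I}(W\setminus N(v))$. That set has fewer than $\omega\bigl(\frac{|W|}{2\omega}+1\bigr)=\frac{|W|}{2}+\omega$ vertices, and the descent continues there.

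The descent thus remains a single chain of $O(\omega\log n)$ steps, each spawning one subcall with smaller $k+b$. After this change, your tuple-length count and your bound on $\alpha$ via subadditivity go through essentially as you planned.
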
 


\medskip
Given a graph $G$, a vertex subset $S\subseteq V(G)$, and a family $\calf$ of vertex subsets of $G$ that satisfy $V(G)\subseteq \bigcup_{F\in\calf} F$, we define the \defn{cover number} of $S$ using $\calf$, denoted as $\mathrm{cov}_{\mathcal{F}}(S)$, to be the minimum cardinality of a subset $\calf'\subseteq\calf$ which satisfies $S\subseteq \bigcup_{F\in\calf'}F$. Observe that $\mathrm{cov}_{\mathcal{F}}(S)\leq |S|$.

\begin{proposition}\label{prop:tw_and_talpha}
    Let $G$ be an $n$-vertex graph, let $f$, $a$, and $b$ be positive integers with $a \geq 2$, and let $\mathcal{F}$ be a $(b, a)$-container family in $G$. If $tw_\alpha(G) > 1020000 \cdot \lceil \log(2n) \cdot a^3 \cdot {\log(a\cdot f + 4)} \cdot f \rceil$, then there exists an induced subgraph $G' \subseteq G$ and an independent set $I \subseteq V(G')$ of size $680000 \cdot \lceil \log(2n) \cdot a^3 \cdot {\log(a\cdot f + 4)} \cdot f \rceil$ that satisfies the following properties:
    \begin{itemize}
        \item Every induced subgraph $H$ of $G'$ with $\alpha(H)\leq b$ satisfies
        $$|V(H)| \quad \leq \quad {42} \cdot b \cdot (\lceil\log 4|\mathcal{F}|\rceil + 680000 \cdot \lceil \log(2n) \cdot a^3 \cdot {\log(a\cdot f + 4)}\rceil).$$
        \item For every $(I,\frac{1}{2})$-balanced separator $S$ in $G'$, we have that $\mathrm{cov}_{\mathcal{F}}(S) \geq f$.
    \end{itemize}
\end{proposition}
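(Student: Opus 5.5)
The plan is to argue in the contrapositive form the statement already has. We start from the assumption that $\mathrm{tw}_\alpha(G)$ exceeds the threshold $1020000\cdot\lceil\log(2n)a^3\log(af+4)f\rceil$ and extract a robust obstruction. As in the standard balanced-separator characterisation of tree-independence (the $\alpha$-analogue of the treewidth/separator duality, cf.\ Proposition~\ref{prop: central bag} and its converse used by Dallard et al.), large $\mathrm{tw}_\alpha$ yields an induced subgraph $G_0$ and a set $W\subseteq V(G_0)$. Every $(W,\frac12)$-balanced separator of $G_0$ then has independence number at least a constant fraction of the threshold.

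Inside $W$ we pick a maximum independent set. A Ramsey-type or greedy argument (taking a large independent subset of $W$ that remains ``balanced-hard'') produces the independent set $I$ of the stated size $680000\cdot\lceil\cdots\rceil$. The constants $1020000$ versus $680000$ are exactly the $3/2$ slack such a passage should cost.

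The second bullet then follows from a direct comparison between $\alpha$ and cover number. If $S$ is an $(I,\frac12)$-balanced separator with $\mathrm{cov}_{\mathcal F}(S)<f$, then $S$ is a union of fewer than $f$ containers. Each container has independence number at most $a$, so $\alpha(S)<af$. We will have arranged, by choosing $I$ with the extra $a^3\log(af+4)\log(2n)$ factor, that every $(I,\frac12)$-balanced separator has $\alpha$ well above $af$, which gives a contradiction. The $\log(2n)$ and $\log(af+4)$ factors are budgeted to survive the third step below, where we shrink the graph.

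The main obstacle is the first bullet: making every induced subgraph $H$ of $G'$ with $\alpha(H)\le b$ small, while keeping the second bullet. Since $\mathcal F$ is a $(b,a)$-container family, every such $H$ lies inside some $F\in\mathcal F$. It therefore suffices to make $|F\cap V(G')|$ small for every $F$, at a price of $\log|\mathcal F|$ via a union bound. I would first try an iterative \emph{peeling} procedure. While some $F$ meets $V(G')$ in more than $42b(\lceil\log4|\mathcal F|\rceil+\cdots)$ vertices, we either delete a random sub-portion of $F\setminus I$, or contract it within $G'$ using an induced-minor-style replacement that preserves all $I$-paths. We then argue that deletions of low-cover sets cannot destroy the hardness of $I$, because any new cheap separator, together with the at most one deleted container, would have cover below $f$ in the previous graph. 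Each step costs one unit of cover, and there are at most $O(\log(2n)\cdot a^3\log(af+4))$ rounds by a potential argument on $|V(G')|$ halving. This explains the extra factor reserved in the size of $I$. If the deterministic peeling fails, the fallback is vertex sampling at rate about $1/|F|$ with Chernoff bounds and a union bound over $\mathcal F$, which yields the $\lceil\log4|\mathcal F|\rceil$ term. Controlling connectivity under sampling would then be the delicate part.
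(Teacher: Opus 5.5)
\paragraph{Verdict: genuine gap.} The paper imports this proposition from Chudnovsky, E~S, and Lokshtanov without proof, so your sketch has to stand on its own. It has a genuine gap exactly where you locate the main obstacle: getting the first bullet while keeping the second.

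\emph{The peeling scheme cannot pay for itself.} Each round costs one unit of cover, because a cheap $(I,\frac12)$-balanced separator $S$ of $G''-D$ with $D\subseteq F$ lifts only to $S\cup D$. So the number of rounds must stay below your slack. That slack is only about $f\log(2n)\,a^2\log(af+4)$, since the $\alpha$-hardness coming from $\mathrm{tw}_\alpha$ becomes cover-hardness only after dividing by $a$. Nothing bounds the number of rounds: a round shrinks one container, so no potential "halves".

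For instance, blow up each vertex of a $t\times t$ wall into a clique of size $K$ far above the target bound, with cliques of adjacent vertices complete to each other. Take $b=1$, $a=2$, and let $\mathcal F$ be the blown-up edges. By the planar separator theorem, every $G'$ satisfying the second bullet meets $\Omega(f^2)$ of these cliques. Each of them must be shrunk from $K$ vertices, and nothing prevents $G_0=G$. Meanwhile $f$ may be as large as $t$ divided by a polylogarithmic factor, so the $\Omega(f^2)$ rounds vastly exceed the slack.

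\emph{Contraction is not an option.} $G'$ must be an induced subgraph of $G$, and $\mathrm{cov}_{\mathcal F}$ concerns vertex sets of $G$.

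\emph{The sampling fallback fails more basically.} A separator $S$ of a sampled graph $G[U]$ lifts only to $S\cup(V(G)\setminus U)$. Sampling vertices at any rate $p<1$ breaks long induced paths, which kills all hardness in a long-subdivided wall, whereas blown-up cliques need rate about $1/K$. So ``controlling connectivity'' is the whole content of the statement, not a detail.

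\emph{A smaller issue in your first step.} Hardness for $W$ does not pass to an independent subset of $W$. Instead, run the separator/decomposition duality directly with a maximum independent set $I$ of $W$, using $\alpha(W\cap C)\le|I\cap C|+\alpha(S)$ for every component $C$ of $G-S$. No Ramsey argument is needed.

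\paragraph{The missing ingredient.} You need a certificate of hardness that is inherited by the subgraph you keep and that lets you thin out all containers simultaneously. One way to supply it, which the shape of the bound suggests, is to sample connectivity rather than vertices. Dualise the cover-hardness of a suitably well-linked $I$; an $O(\log n)$ flow--cut gap is a natural source of the $\log(2n)$ factor. This gives a spread-out fractional routing of induced paths between vertices of $I$ in which every container carries small load. Let $G'$ be induced by the union of a random sample of these paths. Since $X\cap V(P)$ induces a linear forest, an induced path $P$ meets any $X$ with $\alpha(X)\le b$ in at most $2b$ vertices. A Chernoff bound together with a union bound over $\mathcal F$, applied to the number of sampled paths meeting each container, then gives the first bullet. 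Note that $42b(\cdots)=2b\cdot 21(\cdots)$, with $\lceil\log 4|\mathcal F|\rceil$ as the union-bound term. The same per-container bound gives the second bullet, because a set covered by fewer than $f$ containers meets too few sampled paths to balance $I$. Nothing of this kind appears in your proposal, and its parameter budget is asserted rather than derived, so as written it does not prove the proposition.
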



\medskip
\noindent Now we are ready to prove the main structure theorem, which we re-state for convenience. 

\smallskip
\noindent
{\bf Theorem~\ref{thm:atw_bound}. }{\em 
For every positive integer $t$, there exists a positive integer $\nu'(t)$ and real $\epsilon(t) > 0$ such that, for every graph $G\in\calc_t$ on $n$ vertices,
\[
   \atw{G} \ \ \leq\ \ 2^{\nu'(t)\log^{1-\epsilon(t)}n}.
\]
}

\begin{proof}
    Let us assume without loss of generality that $n \geq 2$, since otherwise the theorem holds trivially.
    Let $G\in\calc_t$ be a graph on $n$ vertices.
    Set $a:=(2t\cdot \log n + 3)^4$ and let $c_1$ be the smallest constant such that $a\leq c_1\log^4 (n+1)$ for every non-negative integer $n$, and observe that $c_1$ depends only on $t$.
    Since $G\in\calc_t$, we have that $G$ is $\overline{2K_t}$-free, and so \Cref{prop:containers1} applied to the tuple $(G,t,2,1)$ implies the existence of a $(1,a)$-container family $\mathcal{F}$ in $G$ satisfying $|\mathcal{F}|\leq(n+1)^a$.
    Define $\hat\nu(t)$ to be the smallest positive integer $z$ such that, for every $n \geq 2$,
    \[
        (\omega_n(z))^{\mu(t)}\cdot f_n(\nu(t)) \ \ <\ \ f_n(z),
    \]
    where $\mu(t)$ and $\nu(t)$ are the constants in \Cref{lem:half_balanced_sep}, and $f_n(z)$ and $\omega_n(z)$ are functions on the set of positive integers, defined as follows:
    $$
    f_n(z) := 2^{z\log^{1-\eball/2}n}
    \ \ \text{and}\ \
    \omega_n(z) := 42\cdot(\ceil{a\cdot \log (n+1) + 2} + 680000\cdot\ceil{\log2n\cdot a^3\cdot\log(a\, f_n(z)+4)})
    $$

    \begin{claim}
        $\hat\nu(t)$ is well defined.
    \end{claim}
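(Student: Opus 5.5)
We need to show that some positive integer $z$ satisfies $(\omega_n(z))^{\mu(t)}\cdot f_n(\nu(t)) < f_n(z)$ for every $n\geq 2$ simultaneously. The plan is to compare exponents. Write $L:=\log n\geq 1$ and $\delta:=\eball/2\in(0,\tfrac12]$. Then $f_n(z)=2^{zL^{1-\delta}}$ and $f_n(\nu(t))=2^{\nu(t)L^{1-\delta}}$, so after taking logarithms the target inequality becomes
\[
\mu(t)\log \omega_n(z)\ <\ (z-\nu(t))\,L^{1-\delta}.
\]
The task is therefore to show that $\log\omega_n(z)$ grows only polylogarithmically in $L$, with at most linear dependence on $z$ hidden inside a logarithm. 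Once that is established, the left side is $O_t(\log L + \log z)$ while the right side is linear in $z$.

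\textbf{Bounding $\omega_n(z)$.} Recall $a\leq c_1\log^4(n+1)\leq c_1' L^4$ for $n\geq2$, where $c_1'$ depends only on $t$. Also
\[
\log\bigl(a f_n(z)+4\bigr)\ \leq\ \log a + zL^{1-\delta} + 3\ \leq\ c\,(\log L + zL).
\]
Consequently every factor in $\omega_n(z)$ is polynomial in $L$ and $z$: the term $a\log(n+1)$ is $O(L^5)$, and the term $\log 2n\cdot a^3\cdot\log(af_n(z)+4)$ is $O(L\cdot L^{12}\cdot zL)$. Hence there is a constant $C=C(t)$ with
\[
\omega_n(z)\leq C\,z\,L^{14}\quad\text{for all } n\geq2,\ z\geq1,
\]
and so $\mu(t)\log\omega_n(z)\leq \mu(t)\bigl(\log C+\log z+14\log L\bigr)$.

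\textbf{Choosing $z$.} We need
\[
\mu(t)\bigl(\log C+\log z+14\log L\bigr) < (z-\nu(t))L^{1-\delta}
\]
for all $L\geq1$. Since $\log L\leq \tfrac{2}{1-\delta}L^{1-\delta}$ and $L^{1-\delta}\geq1$, it suffices that
\[
\mu(t)(\log C+\log z) + \frac{28\mu(t)}{1-\delta}L^{1-\delta} < (z-\nu(t))L^{1-\delta}.
\]
This in turn holds whenever $z-\nu(t)-28\mu(t)/(1-\delta) > \mu(t)(\log C+\log z)$. Because the left side grows linearly in $z$ and the right side only logarithmically, all sufficiently large $z$ work, with the threshold depending only on $t$ (through $\mu,\nu,C,\eball$). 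The set of valid $z$ is therefore nonempty, and it has a least element, so $\hat\nu(t)$ is well defined.

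The main obstacle is the uniform-in-$n$ bound on $\omega_n(z)$. The one point needing care is that $\log(af_n(z)+4)$ contributes a factor of $zL$ rather than something exponential in $z$, so $z$ enters $\log\omega_n(z)$ only logarithmically. The rest is routine estimation of constants, using $L\geq1$ to absorb additive constants.
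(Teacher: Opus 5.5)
Your proposal is correct and takes the same route as the paper. Both bound $\omega_n(z)\le C z\log^{14}n$ uniformly for $n\ge 2$, pass to exponents, and note that $\mu(t)\bigl(\log C+\log z+14\log\log n\bigr)+\nu(t)\log^{1-\delta}n$ is dominated by $z\log^{1-\delta}n$ for all large $z$; your explicit estimate $\log L\le\frac{2}{1-\delta}L^{1-\delta}$ makes the uniformity in $n$ clearer than the paper's asymptotic phrasing.
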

    \begin{claimproof}
        We need to show that the set of positive integers $z$ that satisfy $(\omega_n(z))^{\mu(t)}\cdot f_n(\nu(t)) \leq f_n(z)$ is non-empty. 
        First, observe that since $n\geq 2$,
        \begin{align*}
            \omega_n(z)\quad  
            &=\quad 42\cdot\Bigl(\ceil{a\cdot\log(n+1) + 2}\
                +\ 680000\cdot\ceil{\log(2n)\cdot a^3\cdot\log(a\, f_n(z)+4)}\Bigr) \\
            &\leq\quad 42\cdot\ceil{c_1\log^5(n+1) + 2} \\
            &\hspace{50pt}
                +\ 42\cdot 680000\cdot\ceil{\log(2n)\cdot c_1^3\cdot\log^{12}(n+1)\cdot
                  \log(c_1\log^4(n+1)\cdot 2^{z\log^{1-\eball/2}n} + 4)} \\
            &\leq\quad c_2\log^5(n+1)
                +\ c_3\log^{13}(n+1)\cdot\bigl(\log c_1 + 4\log\log(n+1)
                  + z\log^{1-\eball/2}n + 2\bigr) \\[2pt]
            &\leq\quad c_2\log^5(n+1)\; +\; c_3\log^{13}(n+1)\cdot\bigl(c_4\cdot z\cdot \log n \bigr) \\[2pt]
            &\leq\quad c_5\cdot z\cdot \log^{14}n
        \end{align*}
        Here $c_2,c_3,c_4$ and $c_5$ are suitably chosen constants that depend only on $t$.
        Now, using this inequality we obtain the following upper bound: 
        \[
        (\omega_n)^{\mu(t)}\cdot f_n(\nu(t)) \
        \leq\ \left(c_5\cdot z\cdot \log^{14}n\right)^{\mu(t)} 2^{\nu(t)\log^{1-\eball/2}n} \
        \leq\ 2^{\mu(t)(\log c_5\, +\, 14\log\log n\, +\, \log z)\, +\, \nu(t)\log^{1-\eball/2}n}
        \]
        Since $O(\log\log n)\subseteq o(\log^{1-\eball/2}n)$ and since $O(\log z)\subseteq o(z)$, we conclude that there exists a large enough positive integer $z$ such that, for every positive integer $n\geq 2$,
        $$
        \mu(t)\,(\log c_5\; +\; 14\log\log n\; +\; \log z)\ +\ \nu(t)\log^{1-\eball/2}n\quad \leq\quad z\log^{1-\eball/2}n.
        $$
        Thus, the set of positive integers $z$ such that, $(\omega_n(z))^{\mu(t)}\cdot f_n(\nu(t)) < f_n(z)$ holds for every positive integer $n\geq 2$, is non-empty.
        Consequently $\hat\nu(t)$ is well defined.
    \end{claimproof}
    
    Let us denote $\omega_n(\hat\nu(t))$ as $\omega_n$ and $f_n(\hat\nu(t))$ as $f_n$.
    Now, suppose for contradiction that $\atw{G} > 1020000\cdot\ceil{\log(2n)\cdot a^3\cdot\log(a\, f_n+4)\cdot f_n}$.
    Applying \Cref{prop:tw_and_talpha} to the tuple $(G,f_n,a,1,\mathcal{F})$, we obtain an induced subgraph $G'\subseteq G$ and an independent set $I\subseteq V(G')$, such that every induced subgraph $H$ of $G'$ with $\alpha(H)\leq 1$ satisfies $|V(H)|\leq \omega_n$, and every $(I,\frac{1}{2})$-balanced separator $S$ in $G'$ satisfies $\mathrm{cov}_\mathcal{F}(S)\geq f_n$.
    Since a maximum clique $H$ of $G'$ satisfies $\alpha(H)\leq1$ and $|V(H)|=\omega(G')$, the former property gives $\omega(G')\leq\omega_n$.
    Furthermore, $G'\in\calc_t$ since $\calc_t$ is hereditary. 
    Thus, applying \Cref{lem:half_balanced_sep} to $G'$ yields an $(I,\tfrac{1}{2})$-balanced separator $S$ of size at most $\omega(G')^{\mu}\cdot f_{n'}(\nu(t))$, which is at most $\omega_n^{\mu}\cdot f_n(\nu(t))< f_n$.
    As $\mathcal{F}$ is a $(1,a)$-container family, this implies $\mathrm{cov}_\mathcal{F}(S)\leq|S|<f_n$, which is a contradiction.
    Thus, we get,
   \begin{align*}
        \atw{G} \ \
        &\leq\ \ 1020000\cdot\ceil{\log(2n)\cdot a^3\cdot\log(af_n+4)\cdot f_n} \\
        &=\ \ 1020000\cdot\Bigl\lceil
            \log(2n)\cdot c_1^3\cdot\log^{12}(n+1)\cdot 
            \log\bigl(c_1\cdot \log^4(n+1)\cdot 2^{\hat\nu(t)\log^{1-\eball/2}n} + 4\bigr)
            \cdot f_n \Bigr\rceil\\
        &\leq\ \ c_2\cdot \log^{13} n\cdot 
            \Bigl(\log c_1 + 4\log\log (n+1) + \hat\nu(t)\log^{1-\eball/2}n + 2\Bigr)
            \cdot f_n \\
        &\leq\ \ c_3\cdot \log^{14} n\cdot 2^{\hat\nu(t)\log^{1-\eball/2}n}\\
        &\leq\ \ 2^{\log(c_3)\, +\, 14\log\log n\, +\, \hat\nu(t)\log^{1-\eball/2}n}
    \end{align*}
    Since $O(\log\log n)\subseteq o(\log^{1-\eball/2}n)$, we conclude that there exists some constant $\nu'(t)$ depending only on $t$ such that, for every $n\geq 2$,
    $$
    \log c_3\ +\ 14\log\log n\ +\ \hat\nu(t)\log^{1-\eball/2}n \quad \leq \quad \nu'(t)\log^{1-\eball/2}n
    $$ 
    Thus, setting $\nu'(t)$ as above and $\epsilon(t) = \eball/2$ completes the proof of the theorem.
\end{proof}

\section{Tree Independence of Biclique-Induced-Minor-Free Graphs}\label{sec:8}
In this section, we prove Theorem~\ref{thm:biclique}. Towards the proof, we need a few ingredients.




We also rely on the following separator theorem of Korhonen and
Lokshtanov~\cite{korhonen2024induced}.
They state and prove it for an arbitrary pattern graph $H$ excluded as an induced minor, we re-state here their separator theorem as it applies to $H=K_{t,t}$.

\begin{proposition}[\cite{korhonen2024induced}]\label{prop:km_separator}
    There is a randomized polynomial-time
    algorithm that, given a graph $G$ and positive integer $t$, outputs either an induced minor model of
    $K_{t,t}$ in $G$, or a $(V(G),\tfrac23)$-balanced separator of $G$ of size at
    most $O\bigl(t^3\sqrt{|E(G)|}\bigr)$.
\end{proposition}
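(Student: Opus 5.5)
The statement is imported from Korhonen and Lokshtanov~\cite{korhonen2024induced}, so within this paper we simply invoke it. If I had to reprove it, I would follow the Alon--Seymour--Thomas strategy~\cite{alon1990separator}, adapted from minors to induced minors. The plan is to maintain a family of connected, pairwise non-touching branch sets and to grow an induced minor model of $K_{t,t}$ greedily. Whenever the growth gets stuck, I would extract a small balanced separator instead.

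\textbf{Setting up the sparse measure.} I would measure everything in terms of $m=|E(G)|$ rather than $n$. Let $B_1,\dots,B_s$ be the current connected branch sets. As in Alon--Seymour--Thomas, take $X$ to be the union of the branch sets together with their neighbourhoods. The key quantity is the number of edges incident to each $B_j$, not the size of $B_j$; this is what should give a bound of the form $\sqrt m$ rather than $\sqrt n$.

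\textbf{The main loop.} I would repeatedly look at a component $C$ of $G\setminus N[\bigcup_j B_j]$ of size more than $\tfrac23 n$. If no such component exists, then $N[\bigcup_j B_j]$ is the separator. Otherwise, try to add a new branch set inside $N[C]$ that touches many of the existing ones. Such a set is either a short path in $C$ hitting the neighbourhoods of the required sets, or it does not exist. In the latter case, a Menger-type argument yields a set of $O(\sqrt m)$ vertices separating the relevant neighbourhoods. That set is then used to shrink $C$ or to replace a branch set, exactly as in the potential argument of~\cite{alon1990separator}.

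\textbf{Handling the bipartite structure.} For $K_{t,t}$ I would build the two sides alternately. Sets on the same side must not touch, and sets on opposite sides must touch. The main obstacle is the non-touching requirement: in ordinary minor constructions, new branch sets are allowed to be adjacent to everything. Here, a path grown for side $A$ may accidentally touch another set on side $A$. To control this, I would choose connecting paths randomly, via a concurrent-flow / LP-duality argument. Either there is a low-congestion fractional routing, in which case randomly sampled paths are pairwise far apart with constant probability, giving the required anticompleteness. Or the LP dual yields a fractional vertex cut of value $O(t^3\sqrt m)$, which I would round to an actual $(V(G),\tfrac23)$-balanced separator by a region-growing step.

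\textbf{Expected bottleneck.} I expect the rounding of the dual, together with the probability bound that the sampled paths are pairwise non-touching, to be the hardest step. The randomness and the $t^3$ factor in the statement should come from this step.
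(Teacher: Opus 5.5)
Invoking the result is exactly what the paper does. Korhonen and Lokshtanov prove, for an arbitrary excluded induced minor $H$, a randomized polynomial-time algorithm that returns either an induced minor model of $H$ or a balanced separator of size $O\bigl(\min(|V(H)|^2,\log n)\cdot\sqrt{|V(H)|+|E(H)|}\cdot\sqrt{|E(G)|}\bigr)$. The paper only specializes this to $H=K_{t,t}$. There $|V(H)|^2=4t^2$ and $\sqrt{|V(H)|+|E(H)|}=\sqrt{2t+t^2}=O(t)$, which gives the stated $O(t^3\sqrt{|E(G)|})$. So the $t^3$ comes from this substitution, not from a step you would have to reprove.

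If you intend the sketch as an actual proof of the stated bound, it has a concrete gap at the rounding step. A region-growing (Leighton--Rao type) rounding of a fractional vertex cut only guarantees a loss of $O(\log n)$. Your fractional cut of value $O(t^3\sqrt{|E(G)|})$ would therefore give only $O(t^3\sqrt{|E(G)|}\log n)$. Improving the fractional bound cannot rescue this, since grids are $K_{3,3}$-induced-minor-free with fractional value of order $\sqrt{|E(G)|}$. An argument of this form can only reach the $\log n$ branch of the bound above. The $n$-independent $|V(H)|^2$ term needs a rounding that exploits the excluded induced minor itself. This is precisely the difference between Matou\v{s}ek's $O(\sqrt m\log m)$ and Lee's $O(\sqrt m)$ separators for string graphs, and Korhonen and Lokshtanov follow Lee's route.

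The Alon--Seymour--Thomas loop with alternately grown sides is also unjustified as written. Nothing bounds the Menger cut by $O(\sqrt{|E(G)|})$ or keeps $N[\bigcup_j B_j]$ small. In the flow-based argument the loop is unnecessary anyway: the whole model is built at once by sampling flow paths and bounding the probability that two sampled paths touch, and that is where $|E(G)|$ enters.
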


The next lemma upgrades \Cref{prop:km_separator} from a balanced separator to a
$(Y,\tfrac12)$-balanced separator for a prescribed set $Y$, at the cost of a
$\log|V(G)|$ factor in the size.

\begin{lemma}\label{lem:Y_balanced_sep}
    Let $t$ be a positive integer, let $G$ be a $K_{t,t}$-induced-minor-free
    graph, and let $Y\subseteq V(G)$.
    Then $G$ has a $(Y,\tfrac12)$-balanced separator of size at most
    $O\bigl(t^3\log|V(G)|\cdot\sqrt{|E(G)|}\bigr)$.
\end{lemma}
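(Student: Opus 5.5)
We will prove \Cref{lem:Y_balanced_sep} by iterating \Cref{prop:km_separator} on the part of the graph that is still too heavy with respect to $Y$, in the spirit of the standard reduction from vertex-balanced separators to weighted ones, but exploiting that the class of $K_{t,t}$-induced-minor-free graphs is hereditary. The plan is to maintain a set $X$ (initially empty) and a ``heavy'' component. If every component of $G\sm X$ contains at most $|Y|/2$ vertices of $Y$, we stop. Otherwise there is a unique component $K$ of $G\sm X$ with $|Y\cap K|>|Y|/2$. Since $G[K]$ is $K_{t,t}$-induced-minor-free, \Cref{prop:km_separator} gives a $(V(K),\frac23)$-balanced separator $S_K$ of $G[K]$ of size $O(t^3\sqrt{|E(G[K])|})\le O(t^3\sqrt{|E(G)|})$. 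We then add $S_K$ to $X$ and repeat.

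The key observation is that after each step the new heavy component (if one exists) is contained in a component of $G[K]\sm S_K$, so it has at most $\frac23|K|$ vertices. Hence the number of vertices of the heavy component shrinks by a factor $2/3$ per round. After at most $\log_{3/2}|V(G)|+1=O(\log|V(G)|)$ rounds the heavy component would have fewer than $|Y|/2$ vertices, hence cannot be heavy, and the process terminates. Summing the separator sizes gives $|X|\le O(\log|V(G)|)\cdot O(t^3\sqrt{|E(G)|})$, as claimed.

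Two details need to be checked. First, the heavy component is unique because two disjoint components cannot both contain more than $|Y|/2$ vertices of $Y$. Second, components of $G\sm X$ other than the heavy one were already light at the moment they split off, and they stay light since we never remove vertices from them except by further deletions, which only shrinks them. The main (mild) obstacle is the bookkeeping that components split off in earlier rounds remain components, or unions of components, of $G\sm X$ at the end. This holds because later separators lie inside $K$, which is anticomplete to those components. A degenerate point is also handled: if $Y=\emptyset$ the empty set works, and the bound $|E(G[K])|\le|E(G)|$ makes monotonicity immediate.
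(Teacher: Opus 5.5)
Your proof is correct and is essentially the paper's argument. The paper phrases it as an induction on $|C|$: for every $C\subseteq V(G)$ there is an $S\subseteq C$ with $|S|\le b(\log_{3/2}|C|+1)$ such that every component of $G[C]-S$ is light. It applies \Cref{prop:km_separator} to the whole current set $G[C]$ rather than only to the heavy component, then recurses into the unique heavy component, which is exactly your iteration unrolled and yields the same bound $b(\log_{3/2}|V(G)|+1)$.
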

\begin{proof}
    Write $n:=|V(G)|$ and let $b:=O\bigl(t^3\sqrt{|E(G)|}\bigr)$ denote the size
    bound of \Cref{prop:km_separator} for $G$.
    If $|Y|\le1$, then $S:=Y$ is a $(Y,\tfrac12)$-balanced separator of size at
    most one, so assume $|Y|\ge2$.
    
    We prove by induction on |C| that, for every $C\subseteq V(G)$ there is a set $S\subseteq C$ with $|S|\le b\bigl(\log_{3/2}|C|+1\bigr)$ such that every connected component of $G[C]-S$ contains at most $\tfrac12|Y|$ vertices of $Y$. The lemma then follows from this statement by taking
    $C:=V(G)$, since $b\bigl(\log_{3/2}n+1\bigr)=O(b\log n)=
    O\bigl(t^3\log n\cdot\sqrt{|E(G)|}\bigr)$.


        We argue by induction on $|C|$.
        If no connected component of $G[C]$ contains more than $\tfrac12|Y|$
        vertices of $Y$, then $S:=\emptyset$ suffices.
        Otherwise \Cref{prop:km_separator} provides a
        $(V(C),\tfrac23)$-balanced separator $S_1$ of $G[C]$ with $|S_1|\le b$.
        %
        Every connected component of $G[C]-S_1$ has at most $\tfrac23|C|$
        vertices, and since these components partition $Y\cap C$, at most one of
        them contains more than $\tfrac12|Y|$ vertices of $Y$.
        If there is no such component, then $S:=S_1$ suffices, as
        $|S_1|\le b\le b(\log_{3/2}|C|+1)$.
        Otherwise let $C'$ be the unique such component, so $|C'|\le\tfrac23|C|$
        and in particular $|C'|<|C|$.
        By the induction hypothesis applied to $C'$ there is a set
        $S_2\subseteq C'$ with 
        $$|S_2|\le b(\log_{3/2}|C'|+1)\le
        b(\log_{3/2}(\tfrac23|C|)+1)=b\log_{3/2}|C|$$ 
        such that every component of
        $G[C']-S_2$ contains at most $\tfrac12|Y|$ vertices of $Y$.
        Set $S:=S_1\cup S_2$.
        Every connected
        component of $G[C]-S$ either is a component of $G[C]-S_1$ other than
        $C'$, or is a component of $G[C']-S_2$; in both cases it contains at most
        $\tfrac12|Y|$ vertices of $Y$.
        Finally $|S|\le|S_1|+|S_2|\le b+b\log_{3/2}|C|=b(\log_{3/2}|C|+1)$,
        completing the induction.
\end{proof}

Finally, we use the following degree-boundedness result of Bourneuf, Buci\'c,
Cook, and Davies~\cite{bourneuf2024polynomial}.

\begin{proposition}[\cite{bourneuf2024polynomial}]\label{prop:degree_bounded}
    For every positive integer $t$ there exist $c,d>0$ such that for every
    positive integer $\omega$, every graph $G$ that contains no induced
    subdivision of $K_{t,t}$ and no clique on more than $\omega$ vertices has average degree
    at most $c\,\omega^{d}$.
\end{proposition}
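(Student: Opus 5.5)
Since this is an external result quoted from Bourneuf, Buci\'c, Cook, and Davies, my plan is to reconstruct its proof along the standard ``degree-boundedness'' route, in two stages.

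\textbf{Stage 1: a large complete bipartite subgraph.} Suppose $G$ has average degree at least $c\,\omega^d$. First pass to a subgraph $G_0$ of minimum degree at least half the average degree. Then I will show that $G_0$ contains a (not necessarily induced) $K_{s,s}$ with $s=\omega^{O(t)}$. Given such sides $A,B$, I would extract large independent subsets $A'\subseteq A$ and $B'\subseteq B$.
\begin{itemize}
\item Naive Ramsey is too lossy: using only $\omega(G)\le\omega$ gives independent sets of size merely $s^{1/\omega}$, which forces exponential dependence on $\omega$.
\item To stay polynomial, I would instead iterate. Take a vertex $v\in A$; its neighbourhood inside $A$ has clique number at most $\omega-1$. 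Either this neighbourhood is small, in which case greedy selection gives an independent set of size about $|A|/\deg$, or we recurse into it with $\omega$ decreased by one. Done carefully, this yields independent sets of size $s^{\Omega(1)}/\mathrm{poly}(\omega)$.
\end{itemize}
If $|A'|,|B'|\ge t$, then $G[A'\cup B']$ contains an induced $K_{t,t}$, which is trivially an induced subdivision of $K_{t,t}$, a contradiction. So it suffices to choose $s$ polynomially large in $\omega$ (exponent depending on $t$).

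\textbf{Stage 2: the $K_{s,s}$-subgraph-free case.} It remains to handle graphs with no $K_{s,s}$ subgraph, $s=\mathrm{poly}(\omega)$, and large average degree. Here I would invoke a Kühn--Osthus type theorem: a graph with no $K_{s,s}$ subgraph and average degree at least $f(s,t)$ contains an induced subdivision of $K_{t,t}$. The result is polynomial in $\omega$ exactly when $f$ is polynomial in $s$ for fixed $t$, and obtaining that polynomial version is the main obstacle. Kühn and Osthus's original argument gives towers of exponentials.

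For this step I would try the following, in order.
\begin{enumerate}
\item Use a dependent-random-choice / sparse-expander reduction: pass to an almost-regular $K_{s,s}$-free subgraph with good expansion, and note that $K_{s,s}$-freeness keeps neighbourhoods of distinct vertices nearly disjoint.
\item Pick $t$ branch vertices on each side, and connect them by short paths grown greedily through the expander, avoiding the balls around previously built paths.
\item Inducedness is the delicate part. Chords between paths must be killed by deleting small neighbourhoods. The deleted set is controlled by the degree bound times path length, which remains polynomial in $s$ because the paths have length $O(\log n)$ in an expander. Alternatively, one can first pass to a subgraph of girth-like local sparsity using the $K_{s,s}$-freeness.
\end{enumerate}

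Combining the two stages, setting $c,d$ from the exponents in Stage~1 and the polynomial $f(s,t)$ from Stage~2 gives average degree at most $c\,\omega^d$.
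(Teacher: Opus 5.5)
Your overall reduction is the right one. The statement is exactly what you get by combining your Stage~1 with the main theorem of the cited paper, applied with $H=K_{t,t}$: for every graph $H$ there is a polynomial $p$ such that every graph of average degree at least $p(s)$ contains $K_{s,s}$ as a subgraph or an induced subdivision of $H$. The paper itself does not prove anything here; it simply quotes this.

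The genuine gap is Stage~2. The ``K\"uhn--Osthus type theorem with $f$ polynomial in $s$'' is not an auxiliary step you can invoke. It \emph{is} the cited theorem, and you only list things you would try. Earlier arguments of this kind gave only exponential-type dependence on $s$, so getting polynomial dependence is the whole difficulty. The cited proof needs a genuinely new ingredient, an induced variant of the K\H{o}v\'ari--S\'os--Tur\'an theorem, and nothing in your sketch plays that role. As written, the proposal proves the statement only conditionally on the result it is meant to establish.

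The sketch also fails at concrete points:
\begin{itemize}
\item Regularization and expander-extraction lemmas produce non-induced subgraphs. Edges of $G$ discarded there come back as chords, so an induced subdivision found in the subgraph need not be induced in $G$.
\item $K_{s,s}$-freeness does not make the neighbourhoods of two vertices nearly disjoint. It only says that every $s$ vertices have fewer than $s$ common neighbours; for $s\ge 3$ two vertices may share all their neighbours, as in $K_{2,m}$.
\item The deletion budget ``degree times path length'' is not polynomial in $s$. The maximum degree is not controlled by the average degree, and a path length of $O(\log n)$ is unbounded in terms of $s$ and $\omega$, since $n$ is arbitrary. You also do not argue that expansion survives these deletions.
\end{itemize}

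A smaller point about Stage~1: your remark that plain Ramsey is too lossy is backwards. You only need independent sets of size $t$, and $R(\omega+1,t)\le\binom{\omega+t-1}{t-1}=O_t(\omega^{t-1})$, so $s:=\binom{\omega+t-1}{t-1}$ already works with no iteration. The iterative bound $s^{\Omega(1)}/\mathrm{poly}(\omega)$ you assert cannot hold with an exponent independent of $\omega$, by the standard lower bounds on $R(\omega+1,k)$ for fixed $\omega$. Fortunately it is not needed.
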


We are now ready to prove \Cref{thm:biclique}, which we re-state here for convenience.

\smallskip
\noindent
{\bf Theorem~\ref{thm:biclique}. }{\em 
For every positive integer $t$ there exists a positive integer $d$ such that for every $K_{t,t}$-induced-minor-free graph $G$, $\mathrm{tw}_\alpha(G) \leq O(\sqrt{n}(\log n)^d)$.
}

\begin{proof}[Proof of \Cref{thm:biclique}]
    Let $G$ be a $K_{t,t}$-induced-minor-free graph and set $n:=|V(G)|$.
    We may assume that $n$ is larger than some constant $n_0=n_0(t)$: for smaller
    $n$ we have $\mathrm{tw}_\alpha(G)\le n=O(1)$, so the claimed bound holds by
    taking the constant in the $O$-notation large enough.
    Throughout, asymptotic notation hides constants depending only on $t$.

    Since $\overline{2K_t}=K_{t,t}$ and $G$ contains no induced $K_{t,t}$, $G$ is $\overline{2K_t}$-free.
    Applying \Cref{prop:containers1} to the tuple $(G,t,2,1)$ therefore yields a
    $(1,a)$-container family $\mathcal F$ of $G$ with
    $a\ \le\ (2t\log n+3)^4\ =\ O(\log^4 n)$
    and
    $|\mathcal F|\ \le\ (n+1)^a$.
    Since a $(1,1)$-container family is also a $(1,2)$ container family without loss of generality  $a \geq 2$. Furthermore, $\log|\mathcal F|\le a\log(n+1)=O(\log^5 n)$.

    Let $c,d_0>0$ be the constants provided by \Cref{prop:degree_bounded} for
    $t$, and set
    \[
        f\ :=\ \bigl\lceil \sqrt n\,(\log n)^{7d_0+2}\bigr\rceil .
    \]
    Suppose towards a contradiction that
    \[
        \mathrm{tw}_\alpha(G)\ >\ 1020000\,
        \bigl\lceil \log 2n\cdot a^3\cdot\log(af+4)\cdot f\bigr\rceil .
    \]
    Since $a\ge2$, \Cref{prop:tw_and_talpha} applied to $(G,f,a,1,\mathcal F)$
    produces an induced subgraph $G'\subseteq G$ and an independent set
    $I\subseteq V(G')$ such that
    \begin{itemize}
        \item[(i)] every clique $H$ of $G'$
        satisfies
        $|V(H)|\le 42\bigl(\lceil\log 4|\mathcal F|\rceil
        +680000\lceil\log 2n\cdot a^3\cdot\log(af+4)\rceil\bigr)$; and
        \item[(ii)] every $(I,\tfrac12)$-balanced separator $S$ of $G'$ satisfies
        $\mathrm{cov}_{\mathcal F}(S)\ge f$.
    \end{itemize}

We show that the clique number of $G'$ is polylogarithmic. Let $H$ be the largest clique in $G'$, and set $\omega = |H|$.
    As $f\le\sqrt n(\log n)^{7d_0+2}+1\le n$ for $n\ge n_0$, we have
    $\log(af+4)=O(\log n)$. By~(i) we have that
    \[
        \omega \le\ 42\bigl(\lceil\log 4|\mathcal F|\rceil
        +680000\lceil\log 2n\cdot a^3\cdot\log(af+4)\rceil\bigr)
        \ =\ O(\log^5 n)+O(\log^{14} n)\ =\ O(\log^{14} n),
    \]
    where we used $a^3=O(\log^{12}n)$ and $\log 2n$ and $\log(af+4)$ are both upper bounded by $O(\log n)$. 
    
Since $G'$ contains no induced subdivision of $K_{t,t}$ and has no clique on more than $\omega$ vertices, \Cref{prop:degree_bounded}
    bounds the average degree of $G'$ by $c\omega^{d_0}=O(\log^{14 d_0}n)$.
    Therefore $|E(G')| = O(n\log^{14 d_0}n)$, so $\sqrt{|E(G')|} = O(\sqrt n\,\log^{7d_0}n)$.
Hence, applying \Cref{lem:Y_balanced_sep} to $t$, the ($K_{t,t}$-induced-minor-free)
    graph $G'$, and $Y:=I$ yields an $(I,\tfrac12)$-balanced separator $S$ of $G'$
    with
    \[
        |S|\ =\ O\bigl(t^3\log|V(G')|\cdot\sqrt{|E(G')|}\bigr)
        \ =\ O\bigl(\log n\cdot\sqrt n\,\log^{7d_0}n\bigr)
        \ =\ O\bigl(\sqrt n\,\log^{7d_0+1}n\bigr).
    \]
On the other hand every vertex in $V(G)$ is contained in some set in $\mathcal F$ (since it is a clique), thus $\mathrm{cov}_{\mathcal F}(S)\le|S|$, and combining this with~(ii) gives
    $|S|\ge\mathrm{cov}_{\mathcal F}(S)\ge f=\bigl\lceil\sqrt n(\log n)^{7d_0+2}
    \bigr\rceil$.
    But $O(\sqrt n\,\log^{7d_0+1}n)<\sqrt n(\log n)^{7d_0+2}\le f$ for $n\ge n_0$,
    a contradiction.

We conclude that 
\begin{align*}
   \mathrm{tw}_\alpha(G) & \leq 1020000 \cdot \bigl\lceil \log 2n\cdot a^3\cdot\log(af+4)\cdot f\bigr\rceil = O\bigl(\log n\cdot\log^{12}n\cdot\log n\cdot\sqrt n\,\log^{7d_0+2}n\bigr) \\
   & = O\bigl(\sqrt n\,(\log n)^{7d_0+16}\bigr).
\end{align*}
The theorem follows with $d:=7d_0+16$.
\end{proof}

\section{Conclusion}\label{sec:conclusion}
We have shown that for every induced-minor-closed class ${\cal H}$ the tree-independence of ${\cal H}$ grows linearly in the number of vertices, or on the order of the square root of number of vertices, or is sub-polynomial in the number of vertices. 
The tree-independence is sub-polynomial whenever ${\cal H}$ excludes at least one complete bipartite graph and at least one wall. 
Our main theorem is a partial resolution of the conjecture of Chudnovsky et al.~\cite{alphacontainers} that for every integer $t$ the graphs that exclude the complete bipartite graph $K_{t,t}$ and the wall $W_{t\times t}$ as an induced minor have poly-logarithmic tree-independence number. A full resolution of this conjecture remains elusive. 

\paragraph{AI Disclosure.}
We used Claude (Anthropic) and ChatGPT (OpenAI)  to assist with
(i)~searching for relevant literature,
(ii)~tightening and improving the language and exposition throughout the paper,
(iii)~suggesting formulations for parts of the introduction, 
(iv)~catching and proposing fixes for some small mistakes 
(v)~generating initial draft for proofs of individual Lemmas / Theorems based on proof strategies and technical direction from the authors (Sections~\ref{sec:absep} and~\ref{sec:8} only).

The tool materially affected the exposition across all sections of the paper.
The authors have verified the correctness and originality of all content, including all mathematical statements, proofs, and references, and take complete responsibility for the same.

\bibliographystyle{plainurl}
\bibliography{my_bib}

\end{document}